\documentclass[article]{amsart}
\usepackage{amsmath,amssymb,amsthm}
\usepackage[latin1]{inputenc}
\usepackage{version,tabularx,multicol}
\usepackage{graphicx,float, url}
\usepackage{amsfonts}
\usepackage{quotes}
\usepackage{graphicx}
\usepackage{epsfig}
\usepackage{subfigure}
\usepackage{color}
\usepackage{float}
\usepackage{color}
\usepackage{flafter}

\headheight=8pt
\textheight=624pt
\oddsidemargin=18pt
\topmargin=0pt
\textwidth=15,5cm
\evensidemargin=18pt

\vspace{0.3cm}

\theoremstyle{plain}
\newtheorem{theo}{Theorem}[section]

\newtheorem{cor}[theo]{Corollary}
\newtheorem{prop}[theo]{Proposition}

\newtheorem{lem}[theo]{Lemma}

\theoremstyle{remark}
\newtheorem{rem}[]{Remark}[section]

\newcommand{\ind}{{\bf 1}}

\newcommand{\Var}{{\rm Var}}


\begin{document}

\title{On the measure division construction of $\Lambda$-coalescents}

\date{\today}

\author{Linglong YUAN}
\address{University\'e Paris 13, Sorbonne Paris Cit\'e, LAGA, CNRS, UMR 7539, F-93430, Villetaneuse, France.}
\email{yuan@math.univ-paris13.fr}

\begin{abstract}
{ This paper provides a new construction of $\Lambda$-coalescents called "measure division construction". This construction is pathwise and consists of dividing the characteristic measure $\Lambda$ into several parts and adding them one by one to have a whole process. Using this construction, a "universal" normalization factor  $\mu^{(n)}$ for the randomly chosen external branch length $T^{(n)}$ has been discovered for a class of coalescents. This class of coalescents covers processes similar to Bolthausen-Sznitman coalescent,  the coalescents without proper frequencies, and also others. }
\end{abstract}

\keywords{$\Lambda$-coalescent, two-type $\Lambda$-coalescent, measure division construction, branch length, noise measure, main measure}
\subjclass[2010]{Primary: 60J25, 60F05. Secondary: 92D15, 60K37}

\maketitle
\section{Introduction}
\subsection{Motivation and main results}
Let $\mathbb{N}:=\{1,2,\cdots\}$, $\Omega$ be a subset of $ \mathbb{N}$ and $\pi$ a partition of $\Omega$ such that $|\pi|<+\infty$ ($|\pi|$ denotes the number of blocks in $\pi$). The $\Lambda$-coalescent process starting from $\pi$, introduced independently by Pitman\cite{MR1742892} and Sagitov\cite{MR1742154}, is denoted by $\Pi^{(\pi)}:=(\Pi^{(\pi)}(t))_{t\geq 0}$, where $\Pi^{(\pi)}(0)=\pi$ and $\Lambda$ is a finite measure on $[0,1]$. Here we specify that a finite measure on $[0,1]$ can be a null measure and hence its total mass is a non-negative real value. If $\pi=\{\{1\}, \{2\}, \cdots, \{n\}\}$, i.e., the set of first $n$ singletons, then  the process is simply denoted by $\Pi^{(n)}$. In this paper, we will frequently use two other notations $\Lambda_1, \Lambda_2$ for finite measures. We define then $\Pi^{(1,n)}$ as the $\Lambda_1$-coalescent and $\Pi^{(2,n)}$ the $\Lambda_2$-coalescent, both taking $\{\{1\}, \{2\}, \cdots, \{n\}\}$ as initial value.

This process $\Pi^{(\pi)}$ is a continuous time Markov process with c\`adl\`ag trajectories taking values in the set of partitions of $\Omega$.  More precisely: Assume that at time $t$, $\Pi^{(\pi)}(t)$ has $b$ blocks, then after a random exponential time with parameter $g_b$
\begin{equation}\label{gn}
g_b=\sum_{k=2}^{b}{ b \choose k}\lambda_{b,k},  \quad \text{where} \quad \lambda_{b,k}=\int_0^1x^{k-2}(1-x)^{b-k}\Lambda(dx),
\end{equation}
 $\Pi^{(\pi)}$ encounters a collision and the probability for a group of $k (2\leq k\leq b)$ blocks to be merged into a bigger block with the other $b-k$ blocks unchanged is 
\begin{equation*} \frac{\lambda_{b,k}}{g_b}.
\end{equation*}
Then 
\begin{equation}\label{pbk}
p_{b,b-k+1}:=\frac{{b \choose k}\lambda_{b,k}}{g_b}
\end{equation}
is the probability to have $b-k+1$ blocks right after the collision. This definition gives the exchangeability of blocks.  In particular, for any permutation $\rho$ on $\{1,2,\cdots, n\}$, $\rho\circ \Pi^{(n)}\stackrel{(d)}{=}\Pi^{(n)}.$

Remark that if $\Lambda(\{0\})=0$, then we get the following well known formula:
\begin{equation}\label{gn1}
g_b=\int_0^1(1-(1-x)^b-bx(1-x)^{b-1})x^{-2}\Lambda(dx).
\end{equation}

The definition shows that the law of $\Pi^{(\pi)}$ is determined by the initial value $\pi$ and the measure $\Lambda$ which is hence called characteristic measure. 

Notice that $\Omega$ can be an abstract set and the coalescing mechanism works all the same.  The reason why one takes $\Omega$ as a subset of $\mathbb{N}$ relies on its applications in the genealogies of populations. We take $\Pi^{(n)}$ as an example where $\Omega=\{1, 2, \cdots, n\}$.  At time $0$, we have $\Pi^{(n)}(0)=\{\{1\}, \{2\}, \cdots, \{n\}\}$ which is interpreted as a sample of $n$ individuals labelled from $1$ to $n$ . 
If at time $t$, $\Pi^{(n)}$ has its first coalescence where $\{1\}$ and $\{2\}$ are merged together with the others unchanged, then $\Pi^{(n)}(t)=\{\{1,2\}, \{3\}, \cdots, \{n\}\}$ which is interpreted as getting the MRCA (most recent common ancestor) $\{1,2\}$ of individuals $1$ and $2$ with the others unchanged at that time. Hence $\{1,2,\cdots,n\}$ is an absorption state of $\Pi^{(n)}$ and is the MRCA of all individuals. For more details, we refer to \cite{MR671034, kingman2000oc} or \cite{arnason2004mitochondrial, boom1994mdv, Eldon2006, hedgecock19942}.

 Let $1\leq m\leq n$ and $\sigma$ the restriction from $\{1,2,\cdots, n\}$ to $\{1,2,\cdots, m\}$. We have the consistency property: $\sigma$ $\circ$ $\Pi^{(n)}\stackrel{(d)}{=}\Pi^{( m)}$ (see \cite{MR1742892}). According to this property and exchangeability of blocks, if $\pi'$ is a subset of $\pi$, then the restriction of $\Pi^{(\pi)}$ from $\pi$ to $\pi'$ has the same distribution as that of $\Pi^{( \pi')}$. We can also define $\Pi^{(\pi)}$ when $|\pi|=+\infty$ by using the consistency property and the definition in finite cases (see \cite{MR1742892}).

 Let $|\Pi^{(n)}|$ be the block counting process associated to $\Pi^{(n)}$ such that $|\Pi^{(n)}(t)|$ is the number of blocks of $\Pi^{(n)}(t)$ for any $t\geq 0$. Then it decreases from $n$ at time $0$.  We denote by $X_1^{(n)}$ the decrease of number of blocks  at the first coalescence.  For $i\in\left\{ 1,\ldots,n\right\}$, we define
$$T^{(n)}_i:=\inf\left\{t\geq 0|\left\{ i\right\}\notin \Pi^{(n)}_t\right\}$$
the length of the $i$th external branch and $T^{(n)}$ the length of a randomly chosen external branch. By exchangeability, $T_i^{(n)}\stackrel{(d)}{=}T^{(n)}$. We denote by $L_{ext}^{(n)}:=\sum_{i=1}^{n}T_i^{(n)}$ the total external branch length  of $\Pi^{(n)}$, and by $L_{total}^{(n)}$ the total branch length. 

There are four classes of $\Lambda$-coalescents having been largely studied. We give the results concerning $T^{(n)}$, which show a common regularity that we will discuss later. 
\begin{itemize}
\item $\Lambda=\delta_{0}$: Kingman coalescent (see \cite{MR671034}, \cite{MR633178}). Then $\displaystyle  nT^{(n)}$ is asymptotically distributed with density function $\frac{8}{(2+x)^{3}}\ind_{x\geq 0}$ (See \cite{MR2156553}, \cite{caliebe2007length}, \cite{janson2011total}). 
\item $\Lambda=\Lambda^{leb}$: Bolthausen-Sznitman coalescent (see \cite{bolthausen1998ruelle}).  Here $\Lambda^{leb}$ denotes the Lebesgue measure on $[0,1]$.  Then $\displaystyle  (\ln n)T^{(n)}$ converges in distribution to $Exp(1)$ (we denote by $Exp(r), r>0$, the exponential variable with parameter $r$)\cite{MR2554368, dhersin2012external}.
\item $\Lambda(dx)/dx=\frac{x^{a-1}(1-x)^{b-1}}{Beta(a,b)}\ind_{0\leq x\leq 1}, 0<a< 1,b>0:$ $Beta(a,b)$-coalescent. Here $Beta(\cdot,\cdot)$ denotes Euler's beta function.  Then $n^{1-a}T^{(n)}$ converges in distribution to a random variable $T(a,b)$ which has density function $\frac{\Gamma(a+b)}{(1-a)\Gamma(b)}(1+\frac{\Gamma(a+b)}{(2-a)\Gamma(b)}x)^{-\frac{3-2a}{1-a}}\ind_{x\geq 0}$ (see \cite{dhersin2012length}). 
\item $\int_0^1x^{-1}\Lambda(dx)<+\infty$: These processes are called coalescents without proper frequencies (\cite{MR1742892}).  This category contains $Beta(a,b)$-coalescents with $a>1, b>0$ (see \cite{MR1742892}, \cite{MR1781024}). Then $\left(\int_0^1x^{-1}\Lambda(dx)\right)T^{(n)}$ converges in distribution to $Exp(1)$ (see \cite{MR2484170}, \cite{MR2684740} ). 
\end{itemize}

We see a common property for the last three cases concerning one external branch length which is that the normalization factor for $T^{(n)}$ is $\mu^{(n)}:=\int_{1/n}^1x^{-1}\Lambda(dx)$. More precisely, 

\begin{itemize}
\item Bolthausen-Sznitman coalescent:  Notice that $\mu^{(n)}=\ln n$. Hence  directly we have $\displaystyle  \mu^{(n)}T^{(n)}\stackrel{(d)}{\rightarrow } Exp(1)$.
\item $Beta(a,b)$-coalescent with $0<a<1, b>0$: 
$$\mu^{(n)}=\int_{1/n}^1x^{-1}\Lambda(dx)=\int_{1/n}^1\frac{\Gamma(a+b)}{\Gamma(a)\Gamma(b)}x^{a-2}(1-x)^{b-1}dx=\frac{\Gamma(a+b)}{(1-a)\Gamma(a)\Gamma(b)}n^{1-a}+O(1).$$ Hence $\mu^{(n)}T^{(n)}$ converges in distribution to $T(a,b)\Gamma(a+b)/(1-a)\Gamma(a)\Gamma(b)$.
\item  If $\int_0^1x^{-1}\Lambda(dx)<+\infty$, then $\displaystyle\lim_{n\rightarrow +\infty} \frac{\mu^{(n)}}{\int_{0}^1x^{-1}\Lambda(dx)}=1$. Hence $\mu^{(n)}T^{(n)}$ converges in distribution to $Exp(1)$.
\end{itemize}

Kingman coalescent can be viewed as the formal limit of $Beta(a,b)$-coalescent with $0<a<1, b>0$ when $a$ tends to $0$, since the measure $\frac{x^{a-1}(1-x)^{b-1}dx}{Beta(a,b)}\ind_{0\leq x\leq 1}$ tends weakly to the Dirac measure on $0$.  The normalization factor in the case of $Beta(a,b)$-coalescent is $n^{1-a}$, and of Kingman coalescent is $n$. Then we see that these two factors show also some kind of continuity as $a$ tends to $0$.  We can formally take $n$ as $\mu^{(n)}$ in the case of Kingman coalescent. 

Therefore $\mu^{(n)}$ is characteristic for the randomly chosen external branch length in those processes considered.  Notice that $\mu^{(n)}$ concerns only the measure $\Lambda\ind_{[1/n,1]}$, so it is natural to think about the influences of measures $\Lambda\ind_{[1/n,1]}$ and $\Lambda\ind_{[0, 1/n)}$ on the external branch lengths.  More generally,  if $\Lambda=\Lambda_1+\Lambda_2$, how can we evaluate each influence on the construction of the whole $\Lambda$-coalescent? If $\Lambda_1$ is ''small'' enough, we can imagine that $\Pi^{(n)}$ looks like $\Pi^{(2, n)}$ (recall that $\Pi^{(2, n)}$ is the $\Lambda_2$-coalescent ).  In this case, we call $\Lambda_1$ the noise measure and $\Lambda_2$ the main measure.  To separate $\Lambda_1$ and $\Lambda_2$, we introduce in the next section the "measure division construction" of a $\Lambda$-coalescent. The idea of this construction can be at least tracked back to \cite{berestycki-2008-44} where {the authors consider also a coupling of two finite measures on $[0,1]$ but in a slightly different manner. } 

The main results are as follows:
\begin{theo}\label{gnmu0}
If $\Lambda$ satisfies: 
\begin{equation}\label{gnmu}\displaystyle \lim_{n\rightarrow +\infty }\frac{g_n}{n\mu^{(n)}}=0,  \end{equation}
 then $\displaystyle \mu^{(n)}T^{(n)}\stackrel{(d)}{\rightarrow }Exp(1)$. 
\end{theo}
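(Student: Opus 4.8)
The plan is to study the random variable $N^{(n)}(s):=\#\{i\le n:\{i\}\in\Pi^{(n)}(s)\}$, the number of blocks that are still singletons at time $s$. Since $N^{(n)}(s)=\sum_{i=1}^n\ind_{\{T_i^{(n)}>s\}}$, exchangeability gives $\P(T^{(n)}>s)=\frac1n\E[N^{(n)}(s)]$, so the theorem is equivalent to
\begin{equation*}
\tfrac1n\,\E\!\left[N^{(n)}\!\big(t/\mu^{(n)}\big)\right]\longrightarrow e^{-t}\qquad\text{for every }t>0,
\end{equation*}
the right side being the continuous survival function of $Exp(1)$. First I would derive an ODE for $h_n(s):=\E[N^{(n)}(s)]$. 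In the Poissonian construction of $\Pi^{(n)}$, a fixed present singleton ceases to be a singleton at an atom $(t,x)$ exactly when it is marked and at least one of the other $b:=|\Pi^{(n)}(t^-)|$ blocks is marked, an event of probability $x\big(1-(1-x)^{b-1}\big)$; summing over the present singletons, the generator acts on $N$ by $\pi\mapsto-N(\pi)\rho_{|\pi|}$ with
\begin{equation*}
\rho_b:=\int_{[0,1]}\frac{1-(1-x)^{b-1}}{x}\,\Lambda(dx)
\end{equation*}
(with the convention that the integrand equals $b-1$ at $x=0$, so a possible atom of $\Lambda$ at $0$ is included). As $\Pi^{(n)}$ has finitely many states, $h_n$ is $C^1$ and
\begin{equation*}
h_n'(s)=-\,\E\!\left[N^{(n)}(s)\,\rho_{B^{(n)}(s)}\right],\qquad B^{(n)}(s):=|\Pi^{(n)}(s)|,
\end{equation*}
where $b\mapsto\rho_b$ is nondecreasing and $N^{(n)}(s)\le B^{(n)}(s)\le n$.

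The analytic heart of the argument is the asymptotics of the single-block rate,
\begin{equation*}
(\star)\qquad \rho_{m}/\mu^{(n)}\longrightarrow 1\quad\text{uniformly over }\ \alpha n\le m\le n,\quad\text{for each fixed }\alpha\in(0,1],
\end{equation*}
and this is exactly where \eqref{gnmu} is used, in the form $g_n=o(n\mu^{(n)})$. I would prove $(\star)$ via $|\rho_{m}-\mu^{(n)}|\le|\rho_{m}-\mu^{(m)}|+|\mu^{(m)}-\mu^{(n)}|$, showing both differences are $o(\mu^{(n)})$. The key elementary input: writing $f_n(x):=1-(1-x)^n-nx(1-x)^{n-1}=n(n-1)\int_0^x y(1-y)^{n-2}\,dy$, one has $f_n(x)\ge c\,n^2x^2$ on $(0,1/n]$ (whence $f_n(x)\ge c$ on $[1/n,1]$, by monotonicity) for an absolute $c>0$, while by \eqref{gn1} $g_n=\int_0^1 f_n(x)x^{-2}\,\Lambda(dx)$ (plus $\binom{n}{2}\Lambda(\{0\})$ if $\Lambda(\{0\})>0$). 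Integrating against $\Lambda$ gives $\Lambda\big((0,1/n)\big)\le g_n/(cn^2)$ and $\Lambda\big([1/n,K/n]\big)\le K^2g_n/(cn^2)$, so $n\Lambda\big((0,1/n)\big)$ and $n\Lambda\big([1/n,K/n]\big)$ are $o(\mu^{(n)})$ for each fixed $K$. From the first bound, $\int_{(0,1/n)}\tfrac{1-(1-x)^{m-1}}{x}\,\Lambda(dx)\le(m-1)\Lambda\big((0,1/n)\big)=o(\mu^{(n)})$ and likewise $(m-1)\Lambda(\{0\})=o(\mu^{(n)})$ (via $g_n\ge\binom{n}{2}\Lambda(\{0\})$). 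From the second, $\mu^{(n)}-\mu^{(m)}=\int_{1/n}^{1/m}x^{-1}\,\Lambda(dx)\le n\Lambda\big([1/n,1/(\alpha n)]\big)=o(\mu^{(n)})$. Finally $\int_{\{x\ge1/m\}}\tfrac{(1-x)^{m-1}}{x}\,\Lambda(dx)$ is split at $K/m$: the part $x\ge K/m$ is $\le e^{-K/2}\mu^{(n)}$ for $n$ large, the part $x\in[1/m,K/m]$ is $\le n\Lambda\big([1/n,(K/\alpha)/n]\big)=o(\mu^{(n)})$, and sending $K\to\infty$ makes the whole integral $o(\mu^{(n)})$. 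Assembling these gives $|\rho_{m}-\mu^{(n)}|=o(\mu^{(n)})$ uniformly in $m\in[\alpha n,n]$; since $\mu^{(n)}$ is nondecreasing and eventually positive (assuming $\Lambda\not\equiv0$), $(\star)$ follows, and in particular $\rho_n/\mu^{(n)}\to1$.

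With $(\star)$ established, the two-sided control of $h_n$ is routine. For the lower bound, $\rho_{B^{(n)}(s)}\le\rho_n$ yields $h_n'(s)\ge-\rho_n h_n(s)$, hence $h_n(s)\ge n\,e^{-\rho_n s}$ and $\frac1n h_n(t/\mu^{(n)})\ge e^{-(\rho_n/\mu^{(n)})t}\to e^{-t}$. For the upper bound, fix $\eta\in(0,1)$: on $\{N^{(n)}(s)\ge\eta n\}$ one has $\rho_{B^{(n)}(s)}\ge\rho_{N^{(n)}(s)}\ge\rho_{\lceil\eta n\rceil}$, and together with $\E[N^{(n)}(s)\ind_{\{N^{(n)}(s)<\eta n\}}]\le\eta n$ this gives $h_n'(s)\le-\rho_{\lceil\eta n\rceil}\big(h_n(s)-\eta n\big)$, so by Gr\"onwall $\frac1n h_n(s)\le\eta+e^{-\rho_{\lceil\eta n\rceil}s}$. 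At $s=t/\mu^{(n)}$ this tends, by $(\star)$, to $\eta+e^{-t}$; letting $\eta\downarrow0$ yields $\limsup_n\frac1n h_n(t/\mu^{(n)})\le e^{-t}$. Combining, $\frac1n\E[N^{(n)}(t/\mu^{(n)})]\to e^{-t}$, i.e.\ $\P(\mu^{(n)}T^{(n)}>t)\to e^{-t}$ for all $t>0$, which is the asserted $\mu^{(n)}T^{(n)}\stackrel{(d)}{\rightarrow}Exp(1)$.

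I expect the genuine obstacle to be $(\star)$: one must show that the rate at which a tagged block coalesces while $\Theta(n)$ blocks remain is asymptotically \emph{equal} to $\mu^{(n)}$, not merely of that order, and recognize \eqref{gnmu} as precisely the hypothesis that renders the mass of $\Lambda$ near and below $1/n$ (the ``noise measure'' of the measure--division construction) negligible for the normalization. A secondary subtlety is that $\rho_{N^{(n)}(s)}$ cannot be bounded below uniformly in $s$ --- the number of remaining singletons genuinely decays to order $e^{-t}n$ rather than staying near $n$ --- which forces the localization onto $\{N^{(n)}(s)\ge\eta n\}$ followed by $\eta\downarrow0$; this is harmless because only $\E[N^{(n)}(s)]$ is being controlled.
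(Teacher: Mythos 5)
Your proposal is correct, but it takes a genuinely different route from the paper. The paper proves Theorem \ref{gnmu0} through the measure division construction: it splits $\Lambda$ into the noise part $\Lambda\ind_{[0,1/n)}$ and the main part $\Lambda\ind_{[1/n,1]}$, builds the two-type coalescent with its \textit{marking times}, shows via the tripling and a second-moment bound that the noise coalescent removes only $o(n)$ blocks on the time scale $1/\mu^{(n)}$ (Lemma \ref{noise}), shows by a renewal-theoretic argument that the tagged singleton is with high probability absorbed exactly at its \textit{first marking time} (Proposition \ref{plambda} and the ensuing corollaries), and finally uses that this first marking time is, conditionally on the noise path, exponential with rate asymptotically $\mu^{(n)}$ (Corollary \ref{lll}). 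You instead argue at the level of first moments: the identity $\P(T^{(n)}>s)=\frac1n\E[N^{(n)}(s)]$, the exact generator relation $h_n'(s)=-\E[N^{(n)}(s)\,\rho_{B^{(n)}(s)}]$ with $\rho_b=\int_0^1(1-(1-x)^{b-1})x^{-1}\Lambda(dx)$, the uniform rate asymptotics $(\star)$, i.e. $\rho_m=(1+o(1))\mu^{(n)}$ for $\alpha n\le m\le n$ (your estimates $n\Lambda((0,1/n))=o(\mu^{(n)})$ and $n\Lambda([1/n,K/n])=o(\mu^{(n)})$, derived from $f_n(x)\ge c\,n^2x^2$ on $(0,1/n]$, are precisely the content of the paper's equivalent reformulation (\ref{2con}) of condition (\ref{gnmu})), and a Gr\"onwall argument whose localization on $\{N^{(n)}(s)\ge\eta n\}$ combined with the trivial inequality $B^{(n)}(s)\ge N^{(n)}(s)$ avoids any separate control of the block-counting process --- exactly the control the paper must establish through its tripling lemma. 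Your route is more elementary and self-contained (no pathwise coupling, no renewal theory), at the price of being tied to a one-dimensional first-moment computation: the paper's machinery is what subsequently yields the joint convergence of several external branch lengths (Theorem \ref{kd}) and the moment and total-length corollaries, which your ODE argument does not directly provide. One small writing point: in bounding $|\rho_m-\mu^{(m)}|$ the small-$x$ integral should run over $(0,1/m)$ rather than $(0,1/n)$; the missing sliver $[1/n,1/m)$ is covered by the same bound $n\Lambda([1/n,1/(\alpha n)])=o(\mu^{(n)})$ that you already invoke for $\mu^{(n)}-\mu^{(m)}$, so nothing is lost.
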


\begin{rem}\label{nonzero} 
\begin{itemize}
\item Condition $(\ref{gnmu})$ implies that $\Lambda(\{0\})=0.$ Indeed, if $\Lambda(\{0\})>0$, then $g_n\geq {n\choose 2}\Lambda(\{0\})$ and $\mu^{(n)}\leq n\Lambda((0,1])$. Then $(\ref{gnmu})$ is invalid.  

\item {The class of coalescents satisfying condition $(\ref{gnmu})$ does not contain the Beta$(a,b)$-coalescents with $0<a<1$ and $b>0$.  The following conjecture uses a description similar to condition $(\ref{gnmu})$ to include them: 

\textbf{Conjecture:} Let $c>0$. If 
$$\lim_{n\rightarrow +\infty}\frac{g_n}{n\mu^{(n)}}=c,$$
then $\mu^{(n)}T^{(n)}\stackrel{(d)}{\rightarrow }T_c$, where $T_c$ is a random variable with density $\Gamma(2-\alpha^*)(1+cx)^{-\frac{\alpha^*}{\alpha^*-1}-1}\ind_{x\geq 0}$. Here $\alpha^*$ is the unique solution of the equation 
$$\frac{(\alpha-1)\Gamma(2-\alpha)}{\alpha}=c.$$

This conjecture is true for Beta$(a,b)$-coalescents with $0<a<1, b>0$. In this case, we have $c=\frac{(1-a)\Gamma(a)}{2-a}$. The coalescents,  which are more general than but similar to Beta$(a,b)$-coalescents with $0<a<1, b>0$, studied in \cite{dhersin2012length} also satisfy this conjecture.}
\end{itemize}
\end{rem}

\textbf{Examples:} We give a short list of typical examples satisfying condition $(\ref{gnmu})$ which are processes without proper frequencies or similar to Bolthausen-Szitman coalescent. {Define $\bar{\mu}^{(n)}:=\int_{1/n}^1x^{-2}\Lambda(dx)$. }

\textbf{Ex 1:}   $\int_0^1x^{-1}\Lambda(dx)<+\infty$: It suffices to prove that $\displaystyle \lim_{n\rightarrow +\infty}\frac{g_n}{n}=0.$ Recalling the expression  (\ref{gn1}) of $g_n$, we have, for $n\geq 2,$
\begin{align}\label{gnn}
\frac{g_n}{n}&=\frac{\int_0^1(1-(1-x)^{n}-nx(1-x)^{n-1})x^{-2}\Lambda(dx)}{n}\nonumber\\
&=\frac{\int_{1/n}^1(1-(1-x)^{n}-nx(1-x)^{n-1})x^{-2}\Lambda(dx)}{n}+\frac{\int_0^{1/n}(1-(1-x)^{n}-nx(1-x)^{n-1})x^{-2}\Lambda(dx)}{n}\nonumber\\
&\leq \frac{\bar{\mu}^{(n)}}{n}+\frac{\int_0^{1/n}n^2\Lambda(dx)}{n}.\\\nonumber
\end{align}

The second term $\frac{\int_0^{1/n}n^2\Lambda(dx)}{n}=\int_0^{1/n}n\Lambda(dx)\leq \int_0^{1/n}x^{-1}\Lambda(dx)\rightarrow 0.$ For the first term, let $\epsilon>0$ and $M=1/\epsilon$ ,  then 
\begin{align*}
 \frac{\bar{\mu}^{(n)}}{n}&= \frac{\int_{M/n}^{1}x^{-2}\Lambda(dx)}{n}+ \frac{\int_{1/n}^{M/n}x^{-2}\Lambda(dx)}{n}\\
 &\leq \frac{\int_{M/n}^{1}x^{-1}\Lambda(dx)}{M}+\int_{1/n}^{M/n}x^{-1}\Lambda(dx)\\
 &\leq \epsilon\int_0^1x^{-1}\Lambda(dx)+\int_{1/n}^{M/n}x^{-1}\Lambda(dx).\\
\end{align*}
Notice that $ \epsilon\int_0^1x^{-1}\Lambda(dx)$ can be arbitrarily  small and $\int_{1/n}^{M/n}x^{-1}\Lambda(dx)$ tends to $0$ as $n$ tends to $+\infty$. Then we get that $\frac{\bar{\mu}^{(n)}}{n}$ tends to $0$.  Hence if $\int_0^1x^{-1}\Lambda(dx)<+\infty$, condition $(\ref{gnmu})$ is satisfied.

\textbf{Ex 2:}  Bolthausen-Sznitman coalescent: In this case, it is straightforward to prove that $g_n=n-1$ and $\mu^{(n)}=\ln n$, then $\displaystyle \lim_{n\rightarrow +\infty}\frac{g_n}{n\mu^{(n)}}=\lim_{n\rightarrow +\infty}\frac{n-1}{n\ln n}=0$.

\textbf{Ex 3:} $\Lambda$ has a density function $f_{\Lambda}$ on $[0,r)$ where $0<r<1$ and there exists a positive number $M$ such that $f_{\Lambda}<M$ on $[0,r)$:  This kind of processes can be considered as being \textit{dominated} by the Bolthausen-Sznitman coalescent. 

If $\int_0^1x^{-1}\Lambda(dx)<+\infty$, we turn back to the first example. If $\int_0^1x^{-1}\Lambda(dx)=+\infty$, then we have $g_n\leq 2M(n-1)$ for $n$ large enough, hence $\displaystyle \limsup_{n\rightarrow +\infty}\frac{g_n}{n\mu^{(n)}}\leq \lim_{n \rightarrow +\infty}\frac{2M(n-1)}{n\mu^{(n)}}=0$. It turns out that this kind of coalescent also satisfies condition $(\ref{gnmu})$.

\textbf{Ex 4:} $\Lambda$ has a density function $f_{\Lambda}(x)=p(\ln\frac{1}{x})^q$ on $[0,r)$ where $0<r<1$ and $p,q$ are positive numbers:
Using (\ref{gnn}), we have
\begin{align*}
\frac{g_n}{n\mu^{(n)}}&\leq  \frac{\bar{\mu}^{(n)}}{n\mu^{(n)}}+\frac{\int_0^{1/n}n^2\Lambda(dx)}{n\mu^{(n)}}, \forall n\geq 2.
\end{align*}
For two real sequences $(x_n)_{n\geq 1}, (y_n)_{n\geq 1}$, we write  $x_n\asymp y_n$, if there exist two positive constants $c, C$ such that $cy_n\leq x_n\leq Cy_n$ for $n$ large enough. Then it is not difficult to find out that $\mu^{(n)}\asymp(\ln n)^{q+1}$, $\bar{\mu}^{(n)}\asymp n(\ln n)^q$, $\int_0^{1/n}n^2\Lambda(dx)\asymp n(\ln n)^q$. Hence we get $\displaystyle \frac{g_n}{n\mu^{(n)}}\rightarrow 0$.

\begin{theo}\label{kd}
If $\Lambda$ satisfies condition $(\ref{gnmu})$ and $\int_0^1x^{-1}\Lambda(dx)=+\infty$, then we have:
\begin{equation}\label{mainkd} \mu^{(n)}(T_1^{(n)},T_2^{(n)},\cdots, T_n^{(n)}, 0, 0, \cdots) \stackrel{(d)}{\rightarrow } (e_1,e_2,\cdots ),\end{equation}
where $(e_i)_{i\in\mathbb{N}}$ are independently distributed as $Exp(1)$.
\end{theo}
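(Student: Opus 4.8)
Reduce, via exchangeability of blocks and de Finetti's theorem, to the two‑dimensional statement, then prove the latter by a Markovian peeling argument using Theorem~\ref{gnmu0} and a control of the block‑counting process. Concretely, it suffices to show that all finite‑dimensional marginals converge, i.e.\ $\mu^{(n)}(T_1^{(n)},\dots,T_m^{(n)})\stackrel{(d)}{\rightarrow}(e_1,\dots,e_m)$ for every $m$. For fixed $m$, $(T_1^{(n)},\dots,T_m^{(n)})$ is exchangeable and tight (each coordinate converges in law by Theorem~\ref{gnmu0} and exchangeability), so along a subsequence extracted diagonally all these marginals converge, giving an exchangeable law on $[0,\infty)^{\mathbb{N}}$. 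By de Finetti its coordinates are conditionally i.i.d.\ given a random probability measure $\nu$ on $[0,\infty)$, with $\superE[\nu]=Exp(1)$ by Theorem~\ref{gnmu0}. If, in addition, the limiting pair $(T_1,T_2)$ is independent, then $\superE[\nu(A)^2]=\superE[\nu(A)]^2$ for every Borel $A$, whence $\nu=Exp(1)$ a.s.; this pins down every subsequential limit and yields the full convergence. Thus it remains to prove
\begin{equation}\label{twopt}
\superP\Big(T_1^{(n)}>\tfrac{s_1}{\mu^{(n)}},\ T_2^{(n)}>\tfrac{s_2}{\mu^{(n)}}\Big)\ \longrightarrow\ e^{-s_1}e^{-s_2},\qquad s_1,s_2\ge0 .
\end{equation}

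\textbf{Tools.} Since $\Lambda(\{0\})=0$ (Remark~\ref{nonzero}), $\Pi^{(n)}$ has its Poisson construction from a Poisson point process on $\mathbb{R}_+\times(0,1]$ of intensity $dt\otimes x^{-2}\Lambda(dx)$: at an atom $(t,x)$ each current block is hit independently with probability $x$, and a merger occurs when $\ge2$ blocks are hit. From this, with $b$ blocks a fixed singleton enters a merger at rate $r_b:=\int_0^1x^{-1}(1-(1-x)^{b-1})\Lambda(dx)$, while two fixed singletons enter a common merger at rate $\int_0^1x^2\,x^{-2}\Lambda(dx)=\Lambda([0,1])$. Using \reff{gnmu} (by estimates of the type in Examples~1 and~4) one obtains $r_b/\mu^{(b)}\to1$ and $\mu^{(cn)}/\mu^{(n)}\to1$ for each fixed $c\in(0,1]$; moreover the instantaneous rate of decrease of $|\Pi^{(n)}|$ at $b$ blocks equals $\int_0^1x^{-2}(bx-1+(1-x)^b)\Lambda(dx)\le b\mu^{(b)}+O(g_b)=O(b\mu^{(b)})\le Cn\mu^{(n)}$ for $b\le n$, so a first‑ and second‑moment estimate, iterated over boundedly many sub‑windows, gives for each $s>0$ and some $c=c(s)\in(0,1]$
\begin{equation}\label{blockcount}
\superP\Big(\inf_{t\le s/\mu^{(n)}}|\Pi^{(n)}(t)|\ge cn\Big)\ \longrightarrow\ 1 .
\end{equation}
Finally $\mu^{(n)}\to+\infty$ (as $\int_0^1x^{-1}\Lambda(dx)=+\infty$), so over the window $[0,s/\mu^{(n)}]$ a single merger absorbs both $\{1\}$ and $\{2\}$ with probability $\le\Lambda([0,1])\,s/\mu^{(n)}\to0$.

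\textbf{Proof of \reff{twopt}.} By exchangeability of $(T_1^{(n)},T_2^{(n)})$, assume $s_1\le s_2$; with $\tau^{(n)}:=T_1^{(n)}\wedge T_2^{(n)}$, the event in \reff{twopt} is $\{\tau^{(n)}>s_1/\mu^{(n)}\}\cap\{T_2^{(n)}>s_2/\mu^{(n)}\}$. On $\{\tau^{(n)}>s_1/\mu^{(n)}\}$ both $\{1\},\{2\}$ are singleton blocks of $\Pi^{(n)}(s_1/\mu^{(n)})$; with $B_n:=|\Pi^{(n)}(s_1/\mu^{(n)})|$, the strong Markov property together with exchangeability and the consistency property give that, conditionally on $\cf_{s_1/\mu^{(n)}}$, the remaining length of the external branch of $\{2\}$ is distributed as $T_1^{(B_n)}$. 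On the event of \reff{blockcount}, $B_n\ge cn$, hence $B_n\to\infty$ and $\mu^{(B_n)}/\mu^{(n)}\to1$, so by Theorem~\ref{gnmu0} (in the uniform form provided by P\'olya's theorem, as $Exp(1)$ is continuous) one gets $\superP(T_2^{(n)}>s_2/\mu^{(n)}\mid\cf_{s_1/\mu^{(n)}})=e^{-(s_2-s_1)}+o(1)$ in probability on $\{\tau^{(n)}>s_1/\mu^{(n)}\}$. Taking expectations,
\begin{equation*}
\superP\Big(T_1^{(n)}>\tfrac{s_1}{\mu^{(n)}},\ T_2^{(n)}>\tfrac{s_2}{\mu^{(n)}}\Big)=e^{-(s_2-s_1)}\,\superP\Big(\tau^{(n)}>\tfrac{s_1}{\mu^{(n)}}\Big)+o(1).
\end{equation*}
It remains to note $\superP(\tau^{(n)}>s_1/\mu^{(n)})\to e^{-2s_1}$: up to $\tau^{(n)}$ the restriction of $\Pi^{(n)}$ to $\{3,\dots,n\}$ is an autonomous $\Lambda$‑coalescent $\Pi^{(n-2)}$, along which $\{1\}$ and $\{2\}$ are absorbed at total rate $2r_{|\Pi^{(n-2)}(t)|+2}-\Lambda([0,1])=(2+o(1))\mu^{(n)}$ while $|\Pi^{(n-2)}(t)|\ge cn$, so with \reff{blockcount} and the method of Theorem~\ref{gnmu0} this probability tends to $e^{-2s_1}$. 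Hence the left side of \reff{twopt} tends to $e^{-(s_2-s_1)}e^{-2s_1}=e^{-s_1}e^{-s_2}$.

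\textbf{Main obstacle.} The heart of the matter is the block‑count bound \reff{blockcount} together with the uniform‑in‑$b$ comparisons $r_b\sim\mu^{(b)}\sim\mu^{(n)}$ for $b$ of order $n$ --- precisely where \reff{gnmu} is used, via estimates parallel to Examples~1 and~4 --- and the whole scheme closes only because $\int_0^1x^{-1}\Lambda(dx)=+\infty$ forces $\mu^{(n)}\to+\infty$, which makes the bounded‑rate ``decoupling errors'' (simultaneous absorption of two marked singletons; the additive $\Lambda([0,1])$ above; an $O(1)$‑block discrepancy between $\Pi^{(n)}$ and the autonomous coalescent after an absorption) vanish against the absorption rate, of order $\mu^{(n)}$. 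One may also bypass de Finetti and peel all $k$ coordinates simultaneously, at the price of controlling an $O(k)$‑block drift of the environment after each absorption; the estimates involved are the same.
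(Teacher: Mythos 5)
Your proof is correct in outline, but it follows a genuinely different route from the paper. The paper never reduces to pairs: it runs the measure division construction with noise $\Lambda\ind_{[0,1/n)}$ and main measure $\Lambda\ind_{[1/n,1]}$, shows via Proposition~\ref{plambda} and its corollaries that with probability tending to $1$ each of the first $k$ singletons coalesces exactly at its \textit{first marking time}, controls the noise part through Lemma~\ref{noise}, and then reads off the joint limit of all $k$ external branch lengths directly from the explicit product-exponential asymptotics of the first marking times (Corollary~\ref{lll}), after which the passage to \reff{mainkd} is the same finite-dimensional reduction you use. You instead take Theorem~\ref{gnmu0} as a black box, invoke exchangeability plus de Finetti to reduce the whole infinite-dimensional statement to asymptotic independence of a single pair (a nice economy: pairwise independence of the limiting exchangeable sequence forces the directing measure to be deterministic), and prove the two-point statement by strong Markov peeling at time $s_1/\mu^{(n)}$ together with Poissonian rate computations. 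The price you pay is twofold, and these are the only places where real work remains: (i) your block-counting estimate for the \emph{full} coalescent (a positive fraction $c(s)n$ of blocks survives up to time $s/\mu^{(n)}$ with probability tending to $1$) is stronger in scope than anything the paper proves --- its Lemma~\ref{noise} concerns only the small-jump part --- though your drift bound $b\,r_b=O(n\mu^{(n)})$, the second-moment rate $O(n^2\Lambda([0,1]))$, the fact that $\mu^{(n)}\to+\infty$, and $\mu^{(cn)}/\mu^{(n)}\to1$ (Corollary~\ref{maincor}) do carry the iterated sub-window argument through; and (ii) the limit $\superP(T_1^{(n)}\wedge T_2^{(n)}>s_1/\mu^{(n)})\to e^{-2s_1}$ needs a short but genuine stopping/conditional-intensity argument (the phrase ``the method of Theorem~\ref{gnmu0}'' is a misnomer, since the paper's method there is the marking-time construction, not a rate comparison), with the simultaneous-absorption rate $\Lambda([0,1])=o(\mu^{(n)})$ exactly as you note. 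Both sketched steps are sound and the hypotheses \reff{gnmu} and $\int_0^1x^{-1}\Lambda(dx)=+\infty$ enter where you say they do; written out in full, your argument would be a valid alternative proof, somewhat more self-contained probabilistically, whereas the paper's construction yields all $k$ coordinates at once and additionally feeds the moment estimates of Corollary~\ref{momk}.
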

\begin{rem}
The same result has been proved for Bolthausen-Sznitman coalescent in \cite{dhersin2012external}. The authors have used a moment method. We can apply this theorem to Example 4 and Example 3 when $\int_0^1x^{-1}\Lambda(dx)=+\infty.$ If $\int_0^1x^{-1}\Lambda(dx)<+\infty,$ then (\ref{mainkd}) is not true and there is no more asymptotic independence (see \cite{mohle2010asymptotic}). 
\end{rem}

The following three corollaries have also been proved for Bolthausen-Sznitman coalescent (see \cite{dhersin2012external}, \cite{MR2353033}, \cite{gnedin2012asymptotics}).

\begin{cor}\label{momk}
If $\Lambda$ satisfies condition $(\ref{gnmu})$, then for any $r\in \mathbb{R}^+,$ 
\begin{equation*} \displaystyle \lim_{n\rightarrow +\infty}\mathbb{E}[(\mu^{(n)}T^{(n)})^r]=\mathbb{E}[e_1^r],\end{equation*}
where $e_1$ is distributed as $Exp(1)$. Moreover, if $\int_0^1x^{-1}\Lambda(dx)=+\infty,$ then for any $k\in\mathbb{N}$ and any $ (r_1,r_2,\cdots, r_k)\in\{\mathbb{R}^+\}^k,$ we have:
\begin{equation}\label{multiple}\displaystyle \lim_{n\rightarrow +\infty}\mathbb{E}[\prod_{i=1}^{k}(\mu^{(n)}T_i^{(n)})^{r_i}]=\mathbb{E}[\prod_{i=1}^{k}e_i^{r_i}],\end{equation}where $(e_i)_{1\leq i\leq k}$ are independently distributed as $Exp(1).$
\end{cor}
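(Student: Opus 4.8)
The plan is to obtain Corollary \ref{momk} from Theorems \ref{gnmu0} and \ref{kd} by the standard device of upgrading convergence in distribution to convergence of moments through uniform integrability. Since $\mu^{(n)}T^{(n)}\stackrel{(d)}{\to}e_1$ and, when $\int_0^1 x^{-1}\Lambda(dx)=+\infty$, $(\mu^{(n)}T_1^{(n)},\dots,\mu^{(n)}T_k^{(n)})\stackrel{(d)}{\to}(e_1,\dots,e_k)$, continuity of $(y_1,\dots,y_k)\mapsto\prod_i y_i^{r_i}$ on $[0,\infty)^k$ gives convergence in law of the quantities whose expectations occur in the statement, and it remains only to verify uniform integrability of the families $\{\prod_{i=1}^k(\mu^{(n)}T_i^{(n)})^{r_i}\}_n$. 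By the generalized H\"older inequality with $k$ exponents all equal to $k$, together with the exchangeability $T_i^{(n)}\stackrel{(d)}{=}T^{(n)}$, one has $\mathbb{E}\bigl[\prod_{i=1}^k(\mu^{(n)}T_i^{(n)})^{r_i(1+\varepsilon)}\bigr]\le\prod_{i=1}^k\mathbb{E}\bigl[(\mu^{(n)}T^{(n)})^{kr_i(1+\varepsilon)}\bigr]^{1/k}$, so both assertions follow once one establishes
\begin{equation}\label{eq:mombd}
\sup_{n\ge2}\ \mathbb{E}\bigl[(\mu^{(n)}T^{(n)})^{p}\bigr]<+\infty\qquad\text{for every }p>0.
\end{equation}
If $\int_0^1 x^{-1}\Lambda(dx)<+\infty$ then $\mu^{(n)}\le\int_0^1 x^{-1}\Lambda(dx)<\infty$ and $T^{(n)}=T_1^{(n)}$ is stochastically dominated by the merging time of $\{1\}$ and $\{2\}$ in $\Pi^{(n)}$ restricted to $\{1,2\}$, an $\mathrm{Exp}(\Lambda([0,1]))$ variable, so $(\ref{eq:mombd})$ is immediate; hence in the sequel I assume $\int_0^1 x^{-1}\Lambda(dx)=+\infty$, i.e.\ $\mu^{(n)}\uparrow+\infty$.

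For $(\ref{eq:mombd})$ I would work with the hazard-rate identity
\begin{equation*}
\mathbb{P}\bigl(T^{(n)}>t\bigr)=\mathbb{E}\Bigl[\exp\Bigl(-\!\int_0^t\phi\bigl(|\Pi^{(n-1)}(s)|+1\bigr)\,ds\Bigr)\Bigr],\qquad \phi(b):=\int_0^1 x^{-1}\bigl(1-(1-x)^{b-1}\bigr)\Lambda(dx),
\end{equation*}
where $\Pi^{(n-1)}$ is $\Pi^{(n)}$ restricted to $\{2,\dots,n\}$ (a $\Lambda$-coalescent whose block count is independent of the $\{0,1\}$-marks that determine when $\{1\}$ is absorbed), $\phi(b)$ is the rate at which a singleton block is absorbed when $b$ blocks are present, and $\phi$ is non-decreasing; the identity is legitimate because $(\ref{gnmu})$ forces $\Lambda(\{0\})=0$ (Remark \ref{nonzero}), so the paintbox construction applies. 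Condition $(\ref{gnmu})$ supplies the three structural facts that drive the proof: (i) $(1-e^{-1/2})\mu^{(n)}\le\phi(n)\le\mu^{(n)}+(n-1)\Lambda((0,1/n])$ and $(n-1)\Lambda((0,1/n])\le Cg_n/n=o(\mu^{(n)})$ (using $g_n\ge c\,n^2\Lambda([c'/n,1/n])$), whence $\phi(b)\sim\mu^{(b)}$; (ii) for the rate of decrease of the block count $\gamma_b:=\sum_{k=2}^b(k-1)\binom bk\lambda_{b,k}=\int_0^1 x^{-2}\bigl(bx-1+(1-x)^b\bigr)\Lambda(dx)$, the same octave estimate gives $\gamma_b\asymp b\mu^{(b)}$; (iii) $\mu^{(2b)}-\mu^{(b)}\le Cg_b/b=o(\mu^{(b)})$, so $\mu^{(n)}$ is slowly varying and in particular $\log\mu^{(n)}=o(\log n)$.

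Granting these, $(\ref{eq:mombd})$ follows by splitting $\mathbb{E}[(\mu^{(n)}T^{(n)})^p]=(\mu^{(n)})^{p}\int_0^\infty pt^{p-1}\mathbb{P}(T^{(n)}>t)\,dt$ over $[0,1/\mu^{(n)}]$, $[1/\mu^{(n)},1]$ and $[1,+\infty)$. On the first interval the integrand is $\le pt^{p-1}$, contributing at most $1$. On $[1,+\infty)$, $\int_0^t\phi\ge\int_0^1\phi+(t-1)\phi(2)$ with $\phi(2)=\Lambda([0,1])$ yields $\mathbb{P}(T^{(n)}>t)\le e^{-\Lambda([0,1])(t-1)}\mathbb{P}(T^{(n)}>1)$, so this piece is $\le C_p(\mu^{(n)})^{p}\mathbb{P}(T^{(n)}>1)$, which vanishes once $\mathbb{P}(T^{(n)}>1)\le e^{-c\log n}+o((\mu^{(n)})^{-p})$ — a consequence of the block-count control below, together with $(\mu^{(n)})^{p}e^{-c\log n}\to0$ (here $\log\mu^{(n)}=o(\log n)$ is used). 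The middle interval carries the real content: after the substitution $u=\mu^{(n)}t$ one must keep $\int_1^{\mu^{(n)}}pu^{p-1}\mathbb{P}(T^{(n)}>u/\mu^{(n)})\,du$ bounded, and I would do this by comparing $|\Pi^{(n-1)}(s)|$ with the solution $v(\cdot)$ of $v'=-\gamma_v$, $v(0)=n$: the change of variables $ds=-dv/\gamma_v$ gives $\int_0^t\phi(v(s)+1)\,ds\approx\log\bigl(n/v(t)\bigr)$, whereas $u=\mu^{(n)}\int_{v(t)}^n\frac{dv}{v\mu^{(v)}}\le\frac{\mu^{(n)}}{\mu^{(v(t))}}\log\bigl(n/v(t)\bigr)$; hence along the typical path $u^p\,\mathbb{P}(T^{(n)}>t)$ is at most $(\mu^{(n)}/\mu^{(v(t))})^p$ times a bounded function of $\log(n/v(t))$ times $v(t)/n$, and since slow variation of $\mu^{(n)}$ makes $(\mu^{(n)}/\mu^{(v)})^p=n^{o(1)}$ while the factor $v/n$ decays like a genuine power of $n/v$, the corresponding integral stays bounded uniformly in $n$; the deviations of $|\Pi^{(n-1)}(s)|$ from $v(s)$ are controlled by concentration for the relevant sums of independent holding times (the jumps of the block count having only a $\asymp1/x$-type tail, so large deviations are polynomially rare in the number of collisions).

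The main obstacle is precisely this middle-range estimate: showing that the accrued hazard $\int_0^t\phi(|\Pi^{(n-1)}(s)|+1)\,ds$ is, uniformly in $n$ and off an exceptional event of probability $o((\mu^{(n)})^{-p})$ for every $p$, large enough to overcome the polynomial factor $(\mu^{(n)})^{p}$ for all $t\in[1/\mu^{(n)},1]$. The crude exponential domination used on $[1,+\infty)$ is far too weak here (it gives only $\mathbb{E}[(\mu^{(n)}T^{(n)})^p]=O((\mu^{(n)})^p)$), and one cannot simply restart the argument at the scale $n\mapsto\sqrt n$, because $(\ref{gnmu})$ alone does not guarantee that $\mu^{(\sqrt n)}$ is comparable to $\mu^{(n)}$; it is the slow variation of $\mu^{(n)}$, carried through the comparison with $v(\cdot)$, that both makes the argument run and defeats the factor $(\mu^{(n)})^{p}$. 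Everything else — the H\"older reduction, the analytic estimates (i)--(iii), the far-tail bound, and the final integral splitting — is routine.
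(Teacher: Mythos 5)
Your reduction to uniform integrability (H\"older plus exchangeability, then $\sup_{n}\mathbb{E}[(\mu^{(n)}T^{(n)})^{p}]<\infty$ for all $p$) is exactly the right frame, and it is also how the paper proceeds; the problem is that you never actually establish the uniform moment bound, and the route you sketch for it has two concrete defects. First, your ``hazard-rate identity'' is not an identity: the absorption of $\{1\}$ at a Poissonian reproduction event requires at least one \emph{other} block to be marked, so it is not determined by $\{1\}$'s own marks, and it is positively correlated with the jumps of the restricted process $\Pi^{(n-1)}$ (both are driven by the same atom and the same marks of the other blocks). The claimed conditional independence fails already for $n=3$ and $\Lambda=\delta_{p}$: there the true survival probability is $e^{-(3-2p)t}+\tfrac12\bigl(e^{-t}-e^{-(3-2p)t}\bigr)$, while your formula gives $e^{-(3-p)t}+\tfrac{1}{2-p}\bigl(e^{-t}-e^{-(3-p)t}\bigr)$, and these differ for $0<p<1$. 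At best one could hope to salvage a one-sided comparison, but that would itself need a coupling argument you do not supply. Second, even granting such a bound, the piece you yourself call ``the main obstacle'' --- showing that for all $t\in[1/\mu^{(n)},1]$ the accumulated hazard beats the factor $(\mu^{(n)})^{p}$ outside an event of probability $o\bigl((\mu^{(n)})^{-p}\bigr)$ for every $p$ --- is left as a heuristic: the ODE comparison with $v'=-\gamma_{v}$ is stated with ``$\approx$'', and the required super-polynomial (in $\mu^{(n)}$) concentration of the block-counting process is asserted, not proved. So the central estimate of the corollary is missing.

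For contrast, the paper obtains $\sup_{n}\mathbb{E}[(\mu^{(n)}T_{1}^{(n)})^{k}]<\infty$ without any pathwise control of the hazard, by a short bootstrap: using the Markov property at time $M/\mu^{(n)}$ one has $T_{1}^{(n)}\ll M/\mu^{(n)}+\bar T_{1}^{(\beta_{n})}\ind_{T_{1}^{(n)}\geq M/\mu^{(n)}}$ with $\beta_{n}=|\Pi^{(n)}(M/\mu^{(n)})|$; the three regimes $\beta_{n}=n$, $n\epsilon\leq\beta_{n}\leq n-1$, $\beta_{n}<n\epsilon$ are then handled respectively by $e^{-Mg_{n}/\mu^{(n)}}$ (with $g_{n}/\mu^{(n)}$ bounded below), by Theorem \ref{gnmu0} giving $\mathbb{P}(\mu^{(n)}T_{1}^{(n)}>M)\approx e^{-M}$ together with $\mu^{(n)}/\mu^{(n\epsilon)}\to1$ (Corollary \ref{maincor}), and by exchangeability (the factor $\beta_{n}/n$) combined with the representation of $\mu^{(1/y)}$ from Proposition \ref{muequi} to control $\mu^{(n)}/\mu^{(\beta_{n})}$. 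This yields
\begin{equation*}
\mathbb{E}[(\mu^{(n)}T_{1}^{(n)})^{k}]\leq \tfrac43(2M)^{k}+\tfrac23\max\{\mathbb{E}[(\mu^{(j)}T_{1}^{(j)})^{k}]\,|\,j\leq n-1\},
\end{equation*}
whence the uniform bound by induction, and the multi-branch statement follows as you indicated. If you want to keep your analytic route, you would need both a correct (inequality) version of the survival representation and a genuine large-deviation estimate for $|\Pi^{(n)}|$ on the time scale $[1/\mu^{(n)},1]$ under condition (\ref{gnmu}) alone; as written, neither is in place.
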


\begin{cor}\label{ext}
If $\Lambda$ satisfies condition $(\ref{gnmu})$ and $\int_0^1x^{-1}\Lambda(dx)=+\infty$, then the total external branch length $L_{ext}^{(n)}$ satisfies: $\mu^{(n)}L_{ext}^{(n)}/n$ converges in $L^2$ to $1.$
\end{cor}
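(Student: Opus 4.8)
\emph{Proof proposal.} The plan is to deduce $L^{2}$ convergence directly from the moment asymptotics of Corollary~\ref{momk}, using exchangeability of the external branch lengths. Set
$$S_n:=\frac{\mu^{(n)}L_{ext}^{(n)}}{n}=\frac{\mu^{(n)}}{n}\sum_{i=1}^{n}T_i^{(n)}.$$
Since $L^{2}$ convergence $S_n\to 1$ is equivalent to $\mathbb{E}[(S_n-1)^{2}]\to 0$, and $\mathbb{E}[(S_n-1)^{2}]=\mathbb{E}[S_n^{2}]-2\mathbb{E}[S_n]+1$, it suffices to show $\mathbb{E}[S_n]\to 1$ and $\mathbb{E}[S_n^{2}]\to 1$.

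\emph{First moment.} By exchangeability $T_i^{(n)}\stackrel{(d)}{=}T^{(n)}$ for each $i$, so $\mathbb{E}[S_n]=\mu^{(n)}\mathbb{E}[T^{(n)}]=\mathbb{E}[\mu^{(n)}T^{(n)}]$. Applying Corollary~\ref{momk} with $r=1$ gives $\mathbb{E}[\mu^{(n)}T^{(n)}]\to\mathbb{E}[e_1]=1$.

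\emph{Second moment.} Expanding the square, separating diagonal and off-diagonal terms, and using exchangeability of the families $\{T_i^{(n)}\}$ (so all $\mathbb{E}[(T_i^{(n)})^2]$ coincide and all $\mathbb{E}[T_i^{(n)}T_j^{(n)}]$ with $i\neq j$ coincide), one obtains
$$\mathbb{E}[S_n^{2}]=\frac{1}{n}\,\mathbb{E}\big[(\mu^{(n)}T_1^{(n)})^{2}\big]+\frac{n-1}{n}\,\mathbb{E}\big[(\mu^{(n)}T_1^{(n)})(\mu^{(n)}T_2^{(n)})\big].$$
By Corollary~\ref{momk} with $r=2$, the quantity $\mathbb{E}[(\mu^{(n)}T_1^{(n)})^{2}]$ converges to $\mathbb{E}[e_1^{2}]=2$, hence is bounded, so the first term tends to $0$. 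For the second term, this is exactly where the hypothesis $\int_0^1 x^{-1}\Lambda(dx)=+\infty$ is used: identity \reff{multiple} of Corollary~\ref{momk} with $k=2$ and $r_1=r_2=1$ gives $\mathbb{E}[(\mu^{(n)}T_1^{(n)})(\mu^{(n)}T_2^{(n)})]\to\mathbb{E}[e_1e_2]=1$, while $(n-1)/n\to 1$. Therefore $\mathbb{E}[S_n^{2}]\to 0+1=1$, and combining with the first moment gives $\mathbb{E}[(S_n-1)^{2}]\to 1-2+1=0$, which is the assertion.

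The only real input is Corollary~\ref{momk}, so there is no serious obstacle beyond a careful bookkeeping of the exchangeable expansion; the one point worth stressing is that the condition $\int_0^1 x^{-1}\Lambda(dx)=+\infty$ is essential precisely to make the cross term $\mathbb{E}[(\mu^{(n)}T_1^{(n)})(\mu^{(n)}T_2^{(n)})]$ converge to $1=\mathbb{E}[e_1]\mathbb{E}[e_2]$ rather than to something strictly larger, i.e.\ to recover a deterministic $L^{2}$ limit. Without it the covariance term persists and $S_n$ does not concentrate, in agreement with the failure of asymptotic independence for coalescents without proper frequencies.
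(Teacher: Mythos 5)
Your proof is correct and follows essentially the same route as the paper: both compute the first moment via Corollary \ref{momk} with $r=1$ and control the second-order term through the exchangeable expansion, with the cross moment $\mathbb{E}[(\mu^{(n)}T_1^{(n)})(\mu^{(n)}T_2^{(n)})]\to\mathbb{E}[e_1e_2]=1$ supplied by \reff{multiple} (the paper phrases this as the variance/covariance tending to $0$, which is the same computation). Your closing remark on the role of $\int_0^1x^{-1}\Lambda(dx)=+\infty$ matches the paper's remark that the result fails in the proper-frequency case.
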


\begin{cor}\label{total}
If $\Lambda$ satisfies condition $(\ref{gnmu})$ and $\int_0^1x^{-1}\Lambda(dx)=+\infty$, then the total branch length $L_{total}^{(n)}$ satisfies: $\mu^{(n)}L_{total}^{(n)}/n$ converges in probability to $1.$
\end{cor}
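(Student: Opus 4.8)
The plan is to split $L_{total}^{(n)}=L_{ext}^{(n)}+L_{int}^{(n)}$, where $L_{int}^{(n)}:=L_{total}^{(n)}-L_{ext}^{(n)}$ is the total internal (non-external) branch length, and to show that $L_{int}^{(n)}$ is negligible at the scale $n/\mu^{(n)}$. Since $\int_0^1x^{-1}\Lambda(dx)=+\infty$, Corollary~\ref{ext} gives that $\mu^{(n)}L_{ext}^{(n)}/n\to 1$ in $L^2$, hence in probability; so it suffices to prove $\mu^{(n)}L_{int}^{(n)}/n\to 0$ in probability, and, as $L_{int}^{(n)}\ge 0$, it is enough to show $\mathbb{E}\big[L_{int}^{(n)}\big]=o\big(n/\mu^{(n)}\big)$.

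Write $\ell_n:=\mathbb{E}\big[L_{int}^{(n)}\big]$. To estimate it I will use a first-collision decomposition. The internal branches of the coalescent tree are in bijection with the blocks created at collisions, except the very last one, which is the MRCA and carries no branch (here $C_n$ denotes the number of collisions). Let $(N_j)_{j\ge0}$ be the block-counting chain observed at its jump times, so $N_0=n$ and $N_0>N_1>\cdots>N_{C_n}=1$. Right after the $j$-th collision ($1\le j\le C_n-1$) there are $N_j\ge 2$ blocks whose future evolution, after relabelling, is distributed as $\Pi^{(N_j)}$ and is independent of the past; the block just created is a specific one of these $N_j$ blocks, determined by the past, so by exchangeability its residual lifetime has, conditionally on $N_j$, expectation $\mathbb{E}\big[T^{(N_j)}\big]$. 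Summing over collisions, $\ell_n=\mathbb{E}\big[\sum_{j=1}^{C_n-1}\mathbb{E}[T^{(N_j)}]\big]$. Because the chain is strictly decreasing it assumes each value at most once, and $\mathbb{P}(\text{the chain assumes the value }b)=g_b\,\mathbb{E}\big[W_b^{(n)}\big]$, where $W_b^{(n)}$ is the total time with $b$ blocks (so that $\mathbb{E}\big[L_{total}^{(n)}\big]=\sum_{b=2}^n b\,\mathbb{E}\big[W_b^{(n)}\big]$). Hence
\[
\ell_n=\sum_{b=2}^{n-1} g_b\,\mathbb{E}\big[T^{(b)}\big]\,\mathbb{E}\big[W_b^{(n)}\big].
\]

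Now fix $\epsilon\in(0,1/2)$. Since $\int_0^1 x^{-1}\Lambda(dx)=+\infty$ we have $\mu^{(b)}\to+\infty$, so Corollary~\ref{momk} with $r=1$ yields $\mathbb{E}\big[T^{(b)}\big]\le 2/\mu^{(b)}$ for all large $b$; condition~\reff{gnmu} gives $g_b/\mu^{(b)}\le(\epsilon/2)\,b$ for all large $b$; hence there is $m_1$ with $g_b\,\mathbb{E}\big[T^{(b)}\big]\le\epsilon b$ for $b\ge m_1$. Splitting the sum above at $m_1$, bounding $g_b\mathbb{E}[T^{(b)}]\mathbb{E}[W_b^{(n)}]\le\mathbb{E}[T^{(b)}]\le A:=\max_{2\le b\le m_1}\mathbb{E}[T^{(b)}]<+\infty$ for $b<m_1$ and using the previous bound for $b\ge m_1$, one gets
\[
\ell_n\le A\,m_1+\epsilon\sum_{b=2}^n b\,\mathbb{E}\big[W_b^{(n)}\big]=A\,m_1+\epsilon\,\mathbb{E}\big[L_{total}^{(n)}\big]=A\,m_1+\epsilon\big(\mathbb{E}\big[L_{ext}^{(n)}\big]+\ell_n\big),
\]
whence $\ell_n\le 2A\,m_1+2\epsilon\,\mathbb{E}\big[L_{ext}^{(n)}\big]$. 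Dividing by $n/\mu^{(n)}$ and using $\mu^{(n)}/n\to 0$ (elementary) together with $\mu^{(n)}\mathbb{E}\big[L_{ext}^{(n)}\big]/n=\mu^{(n)}\mathbb{E}\big[T^{(n)}\big]\to 1$ (Corollary~\ref{momk}), we obtain $\limsup_n\mu^{(n)}\ell_n/n\le 2\epsilon$; letting $\epsilon\to 0$ finishes the argument, and then $\mu^{(n)}L_{total}^{(n)}/n=\mu^{(n)}L_{ext}^{(n)}/n+\mu^{(n)}L_{int}^{(n)}/n\to 1+0=1$ in probability.

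The step I expect to be the main obstacle is the identity for $\ell_n$: carefully justifying that the block born at a collision has, conditionally on the current block count $b$, expected residual lifetime exactly $\mathbb{E}\big[T^{(b)}\big]$ (this rests on the consistency and exchangeability of $\Lambda$-coalescents recalled in the introduction), and dealing correctly with the all-in-one collisions and with the final MRCA branch. Everything else is routine: inserting condition~\reff{gnmu}, and solving the self-referential inequality $\ell_n\le\epsilon\,\mathbb{E}\big[L_{total}^{(n)}\big]+O_\epsilon(1)$, which is legitimate because $\mathbb{E}\big[L_{total}^{(n)}\big]=\mathbb{E}\big[L_{ext}^{(n)}\big]+\ell_n<+\infty$.
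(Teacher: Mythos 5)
Your proof is correct, and it reaches the conclusion by a genuinely different route than the paper for the key quantitative step. Both arguments reduce the corollary to showing $\mathbb{E}\big[\mu^{(n)}(L_{total}^{(n)}-L_{ext}^{(n)})/n\big]\to 0$ and then invoke Corollary \ref{ext} (plus positivity and Markov's inequality); but where you estimate the expected internal length directly, the paper instead sets $b_n=\mathbb{E}[\mu^{(n)}L_{total}^{(n)}/n]$, derives the first-collision recurrence \reff{mainrec}, proves boundedness of $(b_n)$ by comparison with an auxiliary dominating recurrence, and then applies the perturbation Lemma \ref{reclem} (proved via a random-walk coupling of the jump chains of \reff{rec1} and \reff{rec2}) to conclude $b_n\to 1$. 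Your alternative bypasses Lemma \ref{reclem} and the recurrences entirely: the decomposition $L_{int}^{(n)}=\sum_{j=1}^{C_n-1}R_j$, the observation (strong Markov property at a collision time, consistency and block exchangeability, the new block being past-measurable) that $\mathbb{E}[R_j\mid N_j=b]=\mathbb{E}[T^{(b)}]$, the identity $\mathbb{P}(\text{the chain visits }b)=g_b\,\mathbb{E}[W_b^{(n)}]$, and the self-bounding inequality $\ell_n\le Am_1+\epsilon\,\mathbb{E}[L_{total}^{(n)}]$ (legitimate since $\mathbb{E}[L_{total}^{(n)}]\le n\,\mathbb{E}[\text{absorption time}]<+\infty$ because $g_b\ge g_2=\Lambda([0,1])>0$) are all sound, and the inputs you use --- $\mu^{(b)}\mathbb{E}[T^{(b)}]\to1$ from Corollary \ref{momk}, condition \reff{gnmu}, and $\mu^{(n)}/n\to0$ from Corollary \ref{maincor} --- are available at this point without circularity. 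What each approach buys: yours is shorter and more probabilistic, exploiting the tree structure and the already-proved uniform first-moment asymptotics for external branches, and it delivers $\mathbb{E}[\mu^{(n)}L_{total}^{(n)}/n]\to1$ just as the paper's does; the paper's route isolates in Lemma \ref{reclem} a reusable stability statement for recurrences of the type \reff{rec1}, at the cost of the extra boundedness argument for $b_n$. Only cosmetic caveats: your $W_b^{(n)}$ (time spent with $b$ blocks) clashes with the paper's tripling notation $W_i^{(n)}$, so it should be renamed, and the finiteness of $A=\max_{2\le b\le m_1}\mathbb{E}[T^{(b)}]$ deserves the one-line remark that $T^{(b)}$ is dominated by the absorption time of $\Pi^{(b)}$.
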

\begin{rem}\begin{itemize}
\item
In fact, we will prove that $\displaystyle \lim_{n\rightarrow +\infty}\mathbb{E}[\mu^{(n)}L_{total}^{(n)}/n]=1$. Notice that Corollary \ref{momk} gives $\displaystyle \lim_{n\rightarrow +\infty}\mathbb{E}[\mu^{(n)}L_{ext}^{(n)}/n]=1$. Hence we deduce this corollary using Corollary \ref{ext}. 
\item If $\int_0^1x^{-1}\Lambda(dx)<+\infty$, then (\ref{multiple}) and Corollaries (\ref{ext}) and (\ref{total}) are not true (see again \cite{mohle2010asymptotic}).
\end{itemize}
\end{rem}
\subsection{Organization} In section $2$, we introduce the main object of this paper: the measure division construction. At first,  one needs to define the restriction by the smallest element which serves as a preliminary step of measure division construction. In the same section, we then introduce the two-type $\Lambda$-coalescent which is defined using the measure division construction. This process gives a label \textit{primary} or \textit{secondary} to every block and its every element of a normal $\Lambda$-coalescent.  Using this process, we can see more clearly the coalescent times of some singletons. For a technical use, we then give a tripling to estimate the number of blocks at small times of $\Pi^{(1,n)}$ which is related to the noise measure $\Lambda_1$.  

In section $3$, we at first give a characterization for the condition $(\ref{gnmu})$.  Then we apply the general results obtained in section 2 to those processes satisfying $(\ref{gnmu})$. Finally, we give all the proofs for the results presented in the section $1$.

\section{Measure division construction}
\subsection{Restriction by the smallest element.}
\begin{figure}
  \centering
  \subfigure[$\Pi^{( 5)}$]{\label{fig: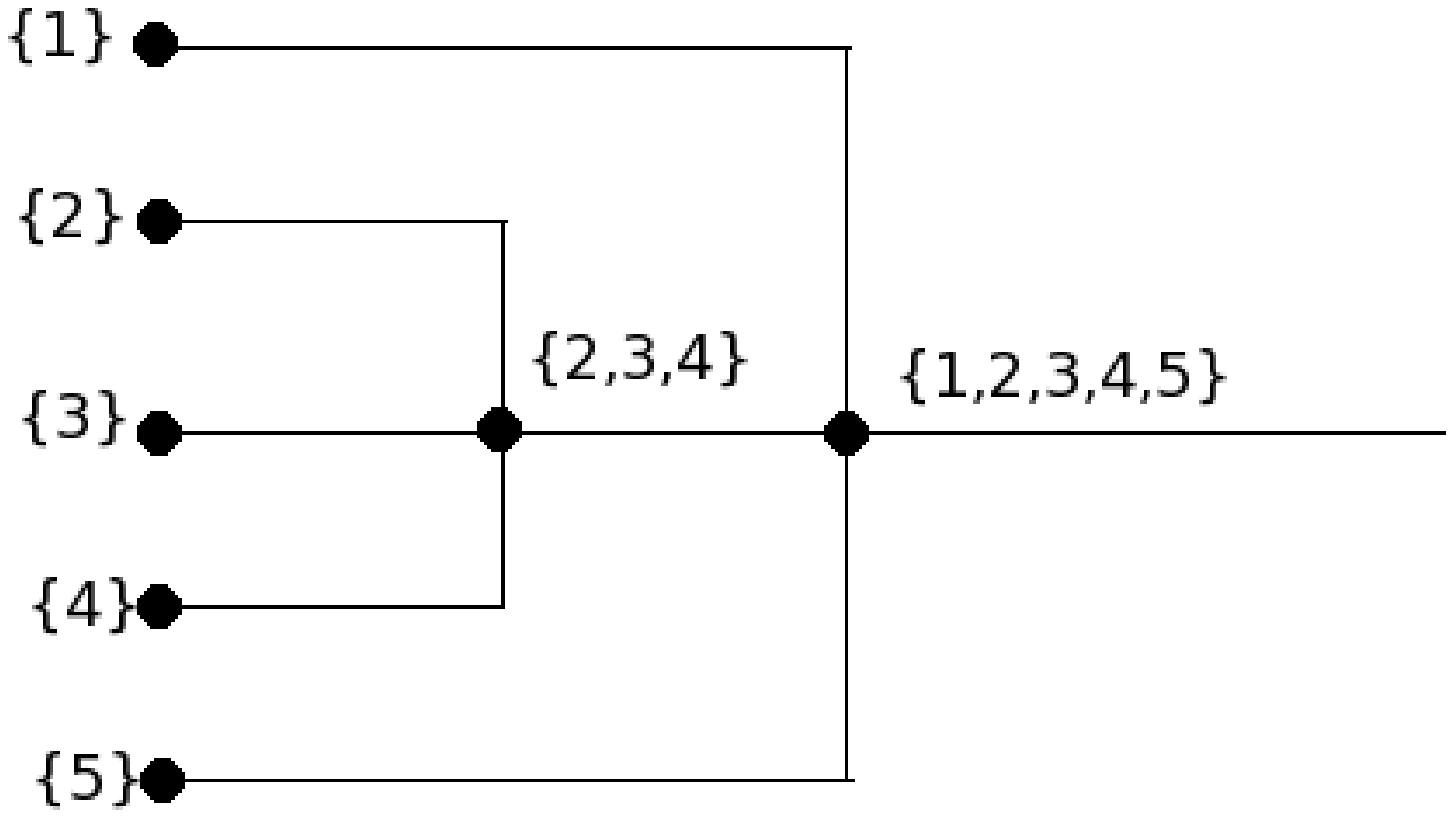}\includegraphics[width=0.495\textwidth]{coalescent.eps}}                
  \subfigure[A restriction by the smallest element of $\Pi^{( 5)}$ from $\{\{1\},\cdots, \{5\}\}$ to $\{\{1,2\},\{3,5\},\{4\}\}$ ]{\label{fig: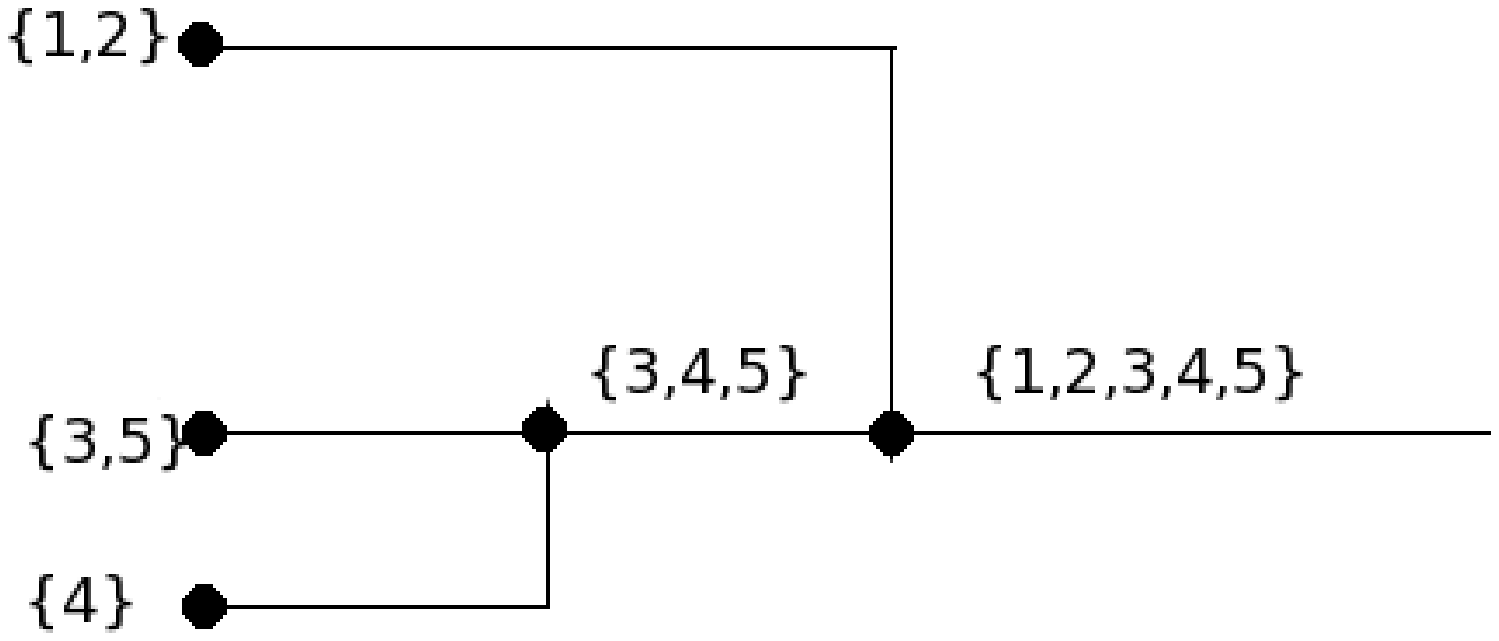}\includegraphics[width=0.495\textwidth]{restrictionsmall.eps}}
  \caption{Restriction by the smallest element}
  \label{fig:Restriction small}
\end{figure}

Let $\xi_n=\{A_1,\cdots, A_{|\xi_n|}\}$, $\chi_n=\{B_1,\cdots, B_{|\chi_n|}\}$ be two partitions of $\{1,2,\cdots, n\}$. We define $s_i^{A}$ (resp. $s_i^{B}$) as the smallest number in the block $A_i$ (resp. $B_i$). We define also the notation $\xi_n\preceq \chi_n,$ if $|\chi_n| \leq  |\xi_n|$ and for any $1\leq i\leq |\chi_n|,  B_{i}=\cup_{j\in I_i}A_j$, where  $\{I_i\}_{1\leq i\leq |\chi_n|}$ is a partition of $\{1,2,\cdots, |\xi_n|\}$. Roughly speaking, $\xi_n$ is finer than $\chi_n$.

If  $\xi_n\preceq \chi_n,$ we define the stochastic process $\bar{\Pi}^{(\chi_n)}$ which is the restriction by the smallest element of $\Pi^{( \xi_n)}$ from $\xi_n$ to $\chi_n$: 
\begin{itemize}
\item $\bar{\Pi}^{(\chi_n)}(0)=\chi_n$;
\item For any $ t\geq 0$, if $\Pi^{( \xi_n)}(t)=\{D_i\}_{1\leq i\leq |\Pi^{( \xi_n)}(t)|}$, where $D_i$ denotes a block,  then
$$\bar{\Pi}^{(\chi_n)}(t)=\{\bigcup_{s_j^{B}\in D_i}B_j\}_{1\leq i\leq |\Pi^{( \xi_n)}(t)|},$$ 
where the empty sets in $\bar{\Pi}^{(\chi_n)}(t)$ are removed.
\end{itemize}

Notice that the restriction by the smallest element is defined from path to path (see Figure \ref{fig:Restriction small}).  

\begin{lem}\label{restby}
$\bar{\Pi}^{(\chi_n)}$ has the same distribution as $\Pi^{( \chi_n)}$. 
\end{lem}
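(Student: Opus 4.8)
The plan is to show that the ``restriction by the smallest element'' operation, when applied to a $\Lambda$-coalescent, produces a process with the same transition rates as the $\Lambda$-coalescent started from $\chi_n$. The natural approach is to check that $\bar\Pi^{(\chi_n)}$ is a continuous-time Markov chain on partitions of $\{1,\dots,n\}$ whose jump rates coincide with \reff{pbk}. First I would observe that $\bar\Pi^{(\chi_n)}$ is a deterministic functional of the path of $\Pi^{(\xi_n)}$, and that the map ``relabel block $D_i$ of $\Pi^{(\xi_n)}(t)$ by $\bigcup_{s_j^B\in D_i}B_j$'' is well-defined because the $s_j^B$ are distinct elements of $\{1,\dots,n\}=\bigcup_i A_i$, so each $s_j^B$ lies in exactly one block $A_k$ and hence in exactly one $D_i$. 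Thus $\bar\Pi^{(\chi_n)}(t)$ is indeed a partition, it is coarser than $\chi_n$, and it is c\`adl\`ag; moreover it starts at $\chi_n$ since at $t=0$ each $D_i$ equals $A_i$ and $\{j: s_j^B\in A_i\}$ has at most one element (namely the $j$ with $B_j\ni s_i^A$, if any, but actually the cleanest bookkeeping is to note $\xi_n\preceq\chi_n$ forces $B_i=\bigcup_{j\in I_i}A_j$ and $s_i^B=\min_{j\in I_i}s_j^A$).

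The key step is the rate computation. Since $\Pi^{(\xi_n)}$ is Markov, $\bar\Pi^{(\chi_n)}$ inherits the Markov property at the level of the underlying process; what must be checked is that merges of blocks of $\bar\Pi^{(\chi_n)}$ correspond bijectively, with the right rates, to merges of the associated blocks of $\Pi^{(\xi_n)}$. Concretely, suppose $\bar\Pi^{(\chi_n)}(t-)$ has $b$ blocks, indexed so that block $\beta$ is the union of the $B_j$ with $s_j^B\in D_{i(\beta)}$; this sets up a bijection between the $b$ blocks of $\bar\Pi^{(\chi_n)}(t-)$ and the $b$ blocks of $\Pi^{(\xi_n)}(t-)$ (here I use that $\Pi^{(\xi_n)}(t-)$ also has $b$ blocks, since empty sets are only removed when the count already dropped — this is the one spot needing care). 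A collision in $\Pi^{(\xi_n)}$ that merges a $k$-subset of its blocks then merges exactly the corresponding $k$-subset of blocks of $\bar\Pi^{(\chi_n)}$, and by exchangeability of blocks in the $\Lambda$-coalescent (restated in the excerpt after \reff{pbk}) the chosen $k$-subset is uniform; the holding-time parameter is $g_b$ and the probability of a given $(b-k+1)$-block outcome is $p_{b,b-k+1}/\binom{b}{k}$ applied to the labelled blocks, which is precisely the transition mechanism of $\Pi^{(\chi_n)}$. Hence $\bar\Pi^{(\chi_n)}$ has the same generator, and therefore the same distribution, as $\Pi^{(\chi_n)}$.

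I would present this via the consistency/restriction property already cited in the excerpt: the law of the partition restricted to the smallest elements $\{s_1^A,\dots,s_{|\xi_n|}^A\}$ is that of $\Pi$ started from the corresponding initial partition (by exchangeability and consistency of $\Lambda$-coalescents), and then the ``relabelling'' $A_j\mapsto B_j$ is a fixed bijection of block-labels that does not affect the Markov transition structure because the $\Lambda$-coalescent dynamics depend on blocks only through their number. In other words, writing $\phi$ for the set bijection $\{s_j^A\}_j \to \{$blocks $A_j\} \to \{$blocks $B_j\}$, the process $\bar\Pi^{(\chi_n)}$ is $\phi$ applied blockwise to a $\Lambda$-coalescent on $|\xi_n|$ elements observed through its restriction to $|\chi_n|$ ``super-blocks'', and super-blocks merge exactly when their constituent blocks do.

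The main obstacle I anticipate is purely combinatorial bookkeeping at the jump times: one must verify that the count $|\bar\Pi^{(\chi_n)}(t)|$ equals $|\Pi^{(\xi_n)}(t)|$ for all $t$ (no block of $\bar\Pi^{(\chi_n)}$ is spuriously empty, i.e. every block $D_i$ of $\Pi^{(\xi_n)}(t)$ contains at least one $s_j^B$), and that the ``remove empty sets'' clause never actually fires — this is where $\xi_n\preceq\chi_n$ is essential, since it guarantees each $A_j$ sits inside some $B_{k(j)}$ and each $B_k$ contains at least one $s_j^B$, namely its own smallest element. Once this invariant is established, the rate identification is routine and the conclusion follows from the fact that a continuous-time Markov chain is determined by its jump rates.
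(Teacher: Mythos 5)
There is a genuine gap, and it sits exactly at the spot you flagged as ``the one spot needing care.'' Your invariant --- that $|\bar{\Pi}^{(\chi_n)}(t)|=|\Pi^{(\xi_n)}(t)|$ for all $t$ and that the ``remove empty sets'' clause never fires --- is false whenever $|\chi_n|<|\xi_n|$. The smallest elements $s_1^B,\dots,s_{|\chi_n|}^B$ form a set of only $|\chi_n|$ representatives, so any block of $\Pi^{(\xi_n)}(t)$ made up entirely of blocks $A_j$ with $j\in I_i\setminus\{\tau_i\}$ contains no $s_j^B$ and is deleted. Already at $t=0$ the two counts differ: $\Pi^{(\xi_n)}(0)=\xi_n$ has $|\xi_n|$ blocks while $\bar{\Pi}^{(\chi_n)}(0)=\chi_n$ has $|\chi_n|$ (this is visible in the paper's own example, restricting $\Pi^{(5)}$ to $\{\{1,2\},\{3,5\},\{4\}\}$: the blocks $\{2\}$ and $\{5\}$ carry no $s^B$ and are dropped). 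Consequently your ``key step'' rate computation does not go through as stated: when $\bar{\Pi}^{(\chi_n)}(t-)$ has $b$ blocks, the underlying process has some $b'\geq b$ blocks and waits at rate $g_{b'}$, not $g_b$; many of its collisions involve only unmarked blocks and produce no jump of $\bar{\Pi}^{(\chi_n)}$ at all; and a collision hitting $k'$ blocks of $\Pi^{(\xi_n)}$ may hit anywhere between $0$ and $k'$ marked ones. Turning this thinned, lumped dynamics back into the rates \reff{pbk} for $b$ blocks is precisely the content of the consistency property (the identity $\lambda_{b,k}=\lambda_{b+1,k}+\lambda_{b+1,k+1}$, or the sampling/restriction argument), which your direct generator computation silently assumes rather than proves.

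The paper avoids this by never comparing block counts of $\bar{\Pi}^{(\chi_n)}$ and $\Pi^{(\xi_n)}$. It sets $\chi'_n=\{A_{\tau_i}\}_{1\leq i\leq|\chi_n|}$, the $\xi_n$-blocks containing the representatives $s_i^B$, defines the \emph{natural} restriction $\hat{\Pi}^{(\chi'_n)}$ of $\Pi^{(\xi_n)}$ to this sub-partition, and invokes consistency to get $\hat{\Pi}^{(\chi'_n)}\stackrel{(d)}{=}\Pi^{(\chi'_n)}$; then it observes that $\bar{\Pi}^{(\chi_n)}$ is obtained from $\hat{\Pi}^{(\chi'_n)}$ merely by completing each $A_{\tau_i}$ at time $0$ with elements larger than $s_{\tau_i}^A$ to form $B_i$, a relabelling that cannot change the law since the dynamics depend on blocks only through their number. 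Your third paragraph gestures at exactly this route, but as written it restricts to \emph{all} the smallest elements $\{s_1^A,\dots,s_{|\xi_n|}^A\}$ (which is the wrong sub-collection --- you need only the $|\chi_n|$ blocks carrying the $s_i^B$) and then re-asserts that ``super-blocks merge exactly when their constituent blocks do,'' which again conflates the unmarked constituents with the representative blocks. To repair your proposal, drop the claimed block-count bijection entirely, restrict to the blocks $A_{\tau_i}$ containing the $s_i^B$, and let consistency do the rate identification; the relabelling step is then immediate.
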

\begin{proof}
Every block in $\chi_n$ is identified by its smallest element which belongs to a unique block in $\xi_n$. Hence for any $B_i$ in $\chi_n$, there exists a unique  $A_{\tau_i}$ such that $A_{\tau_i}\in \xi_n$, $A_{\tau_i} \subset B_i$ and $s_{\tau_i}^{A}=s_i^{B}$ with $\tau_i\in \{1,2,\cdots, |\xi_n|\}.$ Let $\chi'_n=\{A_{\tau_i}\}_{1\leq i\leq |\chi_n|}$ and define a new process $\hat{\Pi}^{(\chi'_n)}$ as follows: 

\begin{itemize}
\item $\hat{\Pi}^{(\chi'_n)}(0)=\chi'_n.$
\item For any $ t\geq 0$, if $\Pi^{( \xi_n)}(t)=\{D_i\}_{1\leq i\leq |\Pi^{( \xi_n)}(t)|}$, then
$$\hat{\Pi}^{(\chi'_n)}(t)=\{\bigcup_{s_{\tau_j}^{A}\in D_i}A_{\tau_j}\}_{1\leq i\leq |\Pi^{( \xi_n)}(t)|},$$ 
where the empty sets in $\hat{\Pi}^{(\chi'_n)}(t)$ are removed.
\end{itemize}

It is easy to see that $\hat{\Pi}^{(\chi'_n)}$ is a { natural} restriction of $\Pi^{( \xi_n)}$ from $\xi_n$ to $\chi'_n$. By the consistency property, we get $\hat{\Pi}^{(\chi'_n)}\stackrel{(d)}{=}\Pi^{( \chi'_n)}$. { In} the construction of $\hat{\Pi}^{(\chi'_n)}$ and $\bar{\Pi}^{(\chi_n)}$, what is determinant is the smallest element in each block.  Hence to obtain $\bar{\Pi}^{(\chi_n)}$ from $\hat{\Pi}^{(\chi'_n)}$, at time $0$, one needs to complete every $A_{\tau_i}$ by some other numbers larger than $s_{\tau_i}^{A}$ to get $B_i$ and then follow the evolution of $\hat{\Pi}^{(\chi'_n)}$. It turns out that $\bar{\Pi}^{(\chi_n)}$ is a coalescent process with initial value $\chi_n$. Hence we can conclude.\end{proof}
 
 \subsection{Measure division construction} 
Let $\Lambda, \Lambda_1,\Lambda_2$ be three finite measures such that $\Lambda=\Lambda_1+\Lambda_2$.  We denote by  $\Pi_{1,2}^{(n)}:=(\Pi_{1,2}^{(n)}(t))_{t\geq 0}$ the stochastic process constructed by the measure division construction using $\Lambda_1$ and $\Lambda_2$. Here  the index $(1,2)$ is for $\Lambda=\Lambda_1+\Lambda_2$ with $\Lambda_1$ called noise measure and $\Lambda_2$ main measure.  Recall that $\Pi^{(1,n)}$ is the $\Lambda_1$-coalescent with $\Pi^{(1,n)}(0)=\{\{1\}, \{2\},\cdots, \{n\}\}$.
\begin{itemize}
\item Step 0: Given a realization or a path $\Pi$ of $\Pi^{(1, n)}$, we set $\Pi_{1,2}^{(n)}(t)=\Pi(t)$, for any $t\geq 0$.  We set also $t_0=0$.
\item Step 1: Let $t_1,t_2, \cdots$ be the coalescent times after $t_0$ of { $\Pi_{1,2}^{(n)}$} (if there is no collision after $t_0$, we set $t_i=+\infty, i\geq 1$).  Within $[t_0,t_1)$, $\Pi_{1,2}^{(n)}$ is constant.  Then we run an independent $\Lambda_2$-coalescent with initial value $\Pi_{1,2}^{(n)}(t_0)$ from time $t_0$.  \begin{itemize}
\item If the $\Lambda_2$-coalescent has no collision on $[t_0,t_1)$, we pass to $[t_1, t_2)$. Similarly,  we construct another independent $\Lambda_2$-coalescent with initial value $\Pi_{1,2}^{(n)}(t_1)$ from time $t_1$, and so on. 
\item Otherwise, we go to the next step.\end{itemize}
\item  Step 2: If finally within $[t_{i-1}, t_{i})$, the related independent $\Lambda_2$-coalescent has its first collision at time $t_{*}$ and its value at $t_*$ is $\xi$. We then modify $(\Pi_{1,2}^{(n)}(t))_{t\geq 0}$ in the following way:
\begin{itemize}
\item We change nothing for $0\leq t<t_{*}$. 
\item Let $\Pi'=(\Pi'(t), t\geq t_{*})$ be the restriction by the smallest element of $(\Pi_{1,2}^{(n)}(t))_{t\geq t_{*}}$ from $\Pi_{1,2}^{(n)}(t_{*})$ to $\xi$. Then let $(\Pi_{1,2}^{(n)}(t))_{t\geq t_{*}}=(\Pi'(t))_{t\geq t_{*}}$ and go to the step 1 by taking $t_*$ as a new starting point. Notice that, due to Lemma \ref{restby},  $(\Pi_{1,2}^{(n)}(t))_{t\geq t_{*}}$ has the same distribution as a $\Lambda_1$-coalescent from time $t_*$ with initial value $\xi$, . 

\end{itemize}
\end{itemize}

\begin{rem}\label{defmdc}
\begin{itemize}
\item The measure division construction works path by path.
\item If we take $\Lambda_1=0$ as noise measure and $\Lambda_2=\Lambda$ as main measure, then $\Pi^{(1, n)}(t)=\{\{1\},\{2\}, \cdots, \{n\}\}$  for any $t\geq 0$ and $\Pi_{1,2}^{(n)}\stackrel{(d)}{=}\Pi^{(n)}$.
\end{itemize}
\end{rem}
\begin{theo}\label{pi12pi}
Let $\Lambda$, $\Lambda_1$ and $\Lambda_2$ be three finite measures and $\Lambda=\Lambda_1+\Lambda_2$. Then we have $\Pi_{1,2}^{(n)}\stackrel{(d)}{=}\Pi^{(n)}$.
\end{theo}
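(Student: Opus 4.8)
The plan is to show that $\Pi_{1,2}^{(n)}$ satisfies the Markovian jump description that characterizes the $\Lambda$-coalescent started from $n$ singletons, namely that it is a continuous-time Markov chain on partitions of $\{1,\dots,n\}$ which, when in a state with $b$ blocks, jumps after an $\mathrm{Exp}(g_b)$ waiting time and merges a $k$-subset of blocks with probability $\lambda_{b,k}/g_b$. Since the construction proceeds path by path and only ever merges blocks (it never splits them), $\Pi_{1,2}^{(n)}$ is automatically a c\`adl\`ag partition-valued process that is non-increasing in the number of blocks; the content is to identify the law of the holding times and the jump kernel. I would argue by a "competing exponentials" superposition: in a state with $b$ blocks, the noise component $\Pi^{(1,n)}$ (which is still running in the background, as the construction only overrides it at the moment a $\Lambda_2$-collision occurs) wants to produce a $k$-merger at rate $\binom{b}{k}\lambda^{(1)}_{b,k}$, while the freshly restarted independent $\Lambda_2$-coalescent wants to produce a $k$-merger at rate $\binom{b}{k}\lambda^{(2)}_{b,k}$; whichever fires first determines the next jump. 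Because $\lambda_{b,k}=\int_0^1 x^{k-2}(1-x)^{b-k}\Lambda(dx)=\lambda^{(1)}_{b,k}+\lambda^{(2)}_{b,k}$ by additivity of $\Lambda=\Lambda_1+\Lambda_2$, the superposed rates add to exactly $\binom{b}{k}\lambda_{b,k}$, and the total rate is $g_b$, giving precisely the $\Lambda$-coalescent transition rates.

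To make this rigorous I would proceed by induction on the successive jump times $0=\sigma_0<\sigma_1<\sigma_2<\cdots$ of $\Pi_{1,2}^{(n)}$, showing at each stage that, conditionally on the past, the process restarts as a fresh $\Lambda$-coalescent from its current value. The key structural input is Lemma~\ref{restby}: each time a $\Lambda_2$-collision occurs at $t_*$ with pre-collision value $\Pi_{1,2}^{(n)}(t_*^-)$ and post-collision value $\xi$, the construction replaces the future by the restriction-by-smallest-element of the ongoing $\Lambda_1$-coalescent from $\Pi_{1,2}^{(n)}(t_*)$ to $\xi$, which by Lemma~\ref{restby} is distributed as a fresh $\Lambda_1$-coalescent started from $\xi$ and independent of the (suitably updated) auxiliary $\Lambda_2$-coalescents. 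Thus between consecutive jumps, $\Pi_{1,2}^{(n)}$ is driven by an independent copy of a $\Lambda_1$-coalescent together with independent $\Lambda_2$-coalescents restarted on each noise-jump interval, so the whole construction is "renewed" at each jump and the analysis reduces to computing the first jump. The inductive step requires checking that the background $\Lambda_1$-coalescent, when it itself jumps at some $t_i$ without triggering $\Lambda_2$, still looks like a fresh $\Lambda_1$-coalescent from $\Pi_{1,2}^{(n)}(t_i)$ (strong Markov property of $\Pi^{(1,n)}$) and that the $\Lambda_2$-coalescents started afresh at $t_i$ are independent of everything so far.

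For the first jump: let $b=n$ at time $0$. The noise process $\Pi^{(1,n)}$ produces its first collision at some time $\tau^{(1)}$ with a $k$-merger having the correct $\Lambda_1$-rates. On each interval $[t_{i-1},t_i)$ of constancy of $\Pi^{(1,n)}$ we run an independent $\Lambda_2$-coalescent; by the memorylessness of the exponential holding times and the fact that these intervals tile $[0,\tau^{(1)})$, the superposition of these independent $\Lambda_2$-runs is (up to the value carried by the noise jumps, which do not change the block set on $[t_{i-1},t_i)$ until the noise actually jumps) equivalent to a single $\Lambda_2$-coalescent from the $n$ singletons run until its first collision at time $\tau^{(2)}$ with the correct $\Lambda_2$-rates; the subtlety that on $[t_{i-1},t_i)$ the value of $\Pi^{(1,n)}$ has changed but its \emph{block set as seen through smallest elements} is consistent is exactly what Lemma~\ref{restby} and the consistency property handle. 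Then $\Pi_{1,2}^{(n)}$ first jumps at $\min(\tau^{(1)},\tau^{(2)})$, which is $\mathrm{Exp}(g_n^{(1)}+g_n^{(2)})=\mathrm{Exp}(g_n)$, and the merger is of type $k$ with probability $\binom{n}{k}(\lambda^{(1)}_{n,k}+\lambda^{(2)}_{n,k})/g_n=\binom{n}{k}\lambda_{n,k}/g_n$, as required.

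The main obstacle is precisely this bookkeeping in Step 1--Step 2: one must verify that the sequence of independent $\Lambda_2$-coalescents, each restarted at a noise-jump time $t_i$ from the current (possibly already $\Lambda_1$-merged) configuration, genuinely splices together into a single Markovian $\Lambda_2$-dynamics compatible with the noise, and that restriction-by-smallest-element correctly transports the $\Lambda_1$-dynamics across the $\Lambda_2$-collision without introducing spurious dependence or altering the rates. Once this "splicing lemma" is in place, the theorem follows by the characterization of Markov jump processes. I would organize the write-up as: (i) reduce to checking the jump kernel and holding-time law by the minimal-rate characterization; (ii) prove the splicing statement for the noise/main interleaving using Lemma~\ref{restby}, the strong Markov property and consistency; (iii) compute the first jump via competing exponentials and additivity $\lambda_{b,k}=\lambda^{(1)}_{b,k}+\lambda^{(2)}_{b,k}$; (iv) close the induction.
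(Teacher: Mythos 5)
Your proposal is correct and takes essentially the same route as the paper: at each coalescent time the construction renews with two independent competing coalescents (the noise, which after a $\Lambda_2$-collision is again a $\Lambda_1$-coalescent by Lemma \ref{restby}, and a freshly started $\Lambda_2$-coalescent), and the additivity $\lambda_{b,k}=\lambda^{(1)}_{b,k}+\lambda^{(2)}_{b,k}$ gives the $\Lambda$-coalescent holding times and jump kernel. The paper states this renewal/competing-exponentials argument more briefly; your additional bookkeeping (including the unnecessary detour about several $\Lambda_2$-runs tiling $[0,\tau^{(1)})$, when only one is run before the first jump since every noise collision is itself a jump of $\Pi_{1,2}^{(n)}$) merely spells out what the paper dismisses as ``easy to see''.
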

\begin{proof} Let $t$ be a coalescent time of $\Pi_{1,2}^{(n)}$. We consider the time of the next coalescence and the value at that moment.  In the measure division construction of $\Pi_{1,2}^{(n)}$, we can see appearing two independent processes with one being a $\Lambda_1$-coalescent with initial value $\Pi_{1,2}^{(n)}(t)$ and the other one being a $\Lambda_2$-coalescent with initial value $\Pi_{1,2}^{(n)}(t)$ from time $t.$ The process $\Pi_{1,2}^{(n)}$ gets the next coalescence whenever one of them first  encounters a coalescence and picks up the value of the process at that moment. Then we follow the same procedure from the new coalescent time of $\Pi_{1,2}^{(n)}$. It is easy to see that $\Pi_{1,2}^{(n)}$ behaves in the same way as $\Pi^{(n)}$. Hence we can conclude. 
\end{proof}

\begin{rem}
The theorem shows that if we exchange the noise measure and the main measure, the distribution of the process is not changed and is uniquely determined by their sum. 
\end{rem}
\begin{rem}
The measure division construction also works for more than two measures.  If there are $k (k\geq 2)$ finite measures $\{\Lambda_i\}_{1\leq i\leq k}$ and $\Lambda=\sum_{i=1}^{k}\Lambda_i,$ one can get a stochastic process by first giving a realization of $\Pi^{(1, n)}$ which will be modified by $\Lambda_2$ in the way described in the measure division construction, and then we apply $\Lambda_3$ on the modified process, etc. The equivalence in distribution can be obtained in a recursive way. 
\end{rem}

We give a corollary to show an immediate application of the measure division construction. The following corollary is essentially the same as Lemma 3.2 in \cite{berestycki-2008-44}. But we prove it again in our way.
\begin{cor}\label{measto}
Let $\Lambda_1$, $\Lambda_2$ be two finite measures such that $\Lambda_1\leq \Lambda_2$, then on can construct $\Pi^{(1,n)}$ and $\Pi^{(2,n)}$ such that $|\Pi^{(2, n)}(t)|\leq |\Pi^{(1, n)}(t)|$ for all $t\geq 0$.\end{cor}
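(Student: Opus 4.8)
The plan is to place both coalescents inside one run of the measure division construction. Since $\Lambda_1\leq\Lambda_2$, the set function $\Lambda_2':=\Lambda_2-\Lambda_1$ is again a finite measure on $[0,1]$ and $\Lambda_2=\Lambda_1+\Lambda_2'$. I would fix a realization $\Pi$ of $\Pi^{(1,n)}$, the $\Lambda_1$-coalescent started from the $n$ singletons, and let $\Pi_{1,2}^{(n)}$ be built from $\Pi$ by the measure division construction with noise measure $\Lambda_1$ and main measure $\Lambda_2'$. By Theorem \ref{pi12pi}, $\Pi_{1,2}^{(n)}\stackrel{(d)}{=}\Pi^{(n)}$, which here is the $\Lambda_2$-coalescent. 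Setting $\Pi^{(1,n)}:=\Pi$ and $\Pi^{(2,n)}:=\Pi_{1,2}^{(n)}$, both have the required marginal laws, so the whole point is to check the pathwise bound $|\Pi^{(2,n)}(t)|\leq|\Pi^{(1,n)}(t)|$ for every $t\geq 0$.

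The mechanism behind this bound is the elementary remark that the restriction by the smallest element cannot increase the number of blocks: if $\xi_n\preceq\chi_n$ and $\bar\Pi^{(\chi_n)}$ is the restriction by the smallest element of $\Pi^{(\xi_n)}$ from $\xi_n$ to $\chi_n$, then $|\bar\Pi^{(\chi_n)}(t)|\leq|\Pi^{(\xi_n)}(t)|$ for all $t$. Indeed, by the definition of the restriction, each block of $\bar\Pi^{(\chi_n)}(t)$ is attached to a distinct block $D_i$ of $\Pi^{(\xi_n)}(t)$ (one containing at least one smallest element $s^B_j$), and the blocks of $\Pi^{(\xi_n)}(t)$ are disjoint. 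The same holds for a time-shifted path, which changes nothing.

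Next I would unwind the recursion in the construction. Let $0=t_*^{(0)}<t_*^{(1)}<t_*^{(2)}<\cdots$ be the successive times at which Step 2 is invoked (a collision produced by the main measure $\Lambda_2'$), with associated values $\xi^{(j)}$; this list is finite, since there are at most $n-1$ collisions in all. Put $\rho^{(0)}:=\Pi$ and, recursively, let $\rho^{(j)}$ be the restriction by the smallest element of the time-shifted path $(\rho^{(j-1)}(s))_{s\geq t_*^{(j)}}$ from $\rho^{(j-1)}(t_*^{(j)})$ to $\xi^{(j)}$; the precondition $\rho^{(j-1)}(t_*^{(j)})\preceq\xi^{(j)}$ holds because $\xi^{(j)}$ is the value at its first collision of a $\Lambda_2'$-coalescent started from the value of $\rho^{(j-1)}$, and this value is constant across the inter-collision interval of $\rho^{(j-1)}$ that contains $t_*^{(j)}$. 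By construction $\Pi^{(2,n)}(t)=\rho^{(j)}(t)$ for $t\in[t_*^{(j)},t_*^{(j+1)})$. Applying the remark of the previous paragraph and chaining down, $|\rho^{(j)}(t)|\leq|\rho^{(j-1)}(t)|\leq\cdots\leq|\rho^{(0)}(t)|=|\Pi(t)|$ for all $t\geq t_*^{(j)}$; in particular $|\Pi^{(2,n)}(t)|\leq|\Pi^{(1,n)}(t)|$ on each interval $[t_*^{(j)},t_*^{(j+1)})$, hence for all $t\geq 0$.

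There is no serious obstacle here; the statement is essentially a direct reading of the measure division construction. The one point that requires a little care is recognizing that iterating the restriction by the smallest element still leaves the block count dominated by that of the original $\Lambda_1$-coalescent path $\Pi$, which is exactly what the chaining in the last paragraph records; verifying the preconditions $\rho^{(j-1)}(t_*^{(j)})\preceq\xi^{(j)}$ and the finiteness of the list of Step-2 times is routine.
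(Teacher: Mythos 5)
Your proposal is correct and follows exactly the paper's route: decompose $\Lambda_2=\Lambda_1+(\Lambda_2-\Lambda_1)$, build the $\Lambda_2$-coalescent by the measure division construction on a path of $\Pi^{(1,n)}$ (invoking Theorem \ref{pi12pi} for the law), and note that the construction only merges blocks of that path. The paper states this in two lines; your chaining through the Step-2 times via the fact that restriction by the smallest element cannot increase the block count simply makes the same argument explicit.
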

\begin{proof}
{$\Pi^{(2, n)}$ can be regarded as the measure constructed process by imposing the measure $\Lambda_2-\Lambda_1$ on the paths of $\Pi^{(1, n)}$. Then we can deduce this corollary. }
 \end{proof}

\subsection{Two-type $\Lambda$-coalescents.}
\subsubsection{Definitions}
Let $\Lambda, \Lambda_1$, $\Lambda_2$ be three finite measures and $\Lambda=\Lambda_1+\Lambda_2$ and $\Lambda_2$ satisfies $\int_0^1x^{-2}\Lambda_2(dx)<+\infty$. A two-type $\Lambda$-coalescent, denoted by $\tilde{\Pi}_{1,2}^{(n)},$ is to give a label \textit{primary} or \textit{secondary} to every block and also to its every element at any time $t$ of a normal $\Lambda$-coalescent. A block is \textit{secondary} if and only if every element in this block is \textit{secondary}. The construction is via the measure division construction.  Let $(\eta^{(2)}_i)_{i\geq 1}$ be independent random variables following the distribution of $\frac{x^{-2}\Lambda_2(dx)}{\int_0^1x^{-2}\Lambda_2(dx)}$, $(e_i^{(2)})_{i\geq 1}$ i.i.d copies of $Exp(\int_0^1x^{-2}\Lambda_2(dx))$ and $(S_i^{(2)})_{i\geq 1}=(\sum_{j=1}^{i}e_j^{(2)})_{i\geq 1}$. 

\textbf{Construction of a two-type $\Lambda$-coalescent:}
\begin{itemize}
\item Step 0: We pick a realization or a path $\Pi$ of $\Pi^{(1, n)}$. Every element and every block of $\Pi$ at any time is labeled \textit{primary}.  We also fix independent realizations of $(\eta^{(2)}_i)_{i\geq 1}$ and $(S_i^{(2)})_{i\geq 1}$. Let $\tilde{\Pi}_{1,2}^{(n)}$ be the path $\Pi$ with labels.
\item Step 1: At time $S^{(2)}_1$,  every block of $\tilde{\Pi}_{1,2}^{(n)}(S^{(2)}_1)$ is independently marked "Head" with probability $\eta^{(2)}_1$ and "Tail" with probability $1-\eta^{(2)}_1$. Every element in a "Head" block is then labelled \textit{secondary}. All those blocks marked "Head" are merged into a bigger block, provided that there are at least two "Head"s. In this case, we use the restriction by the smallest element to modify $\tilde{\Pi}_{1,2}^{(n)}$ at time $S_1^{(2)}$in the same way as in the measure division construction in section 2.2. We still call the modified path $\tilde{\Pi}_{1,2}^{(n)}$ and then forward to the time $S^{(2)}_{2}$ and do the same operations. This procedure can be continued until MRCA.

 \end{itemize}
 
 It is easy to verify that without labels, $\tilde{\Pi}_{1,2}^{(n)}$ has the same distribution as $\Pi^{(n)}$.  We call $(S_i^{(2)})_{i\geq 1}$ the \textit{marking time}s. We define $L_i^{(2, n)}$ as the \textit{first marking time} of $\{i\}$ when $\{i\}$ is marked "Head " for the first time.  Let $L_i^{(2, n)}=+\infty$,  if $\{i\}$  is never marked as "Head" . 
 
\begin{rem}
If $\Pi=\{\{1\}, \cdots, \{n\}\}$, then we get a coupling between $\Lambda_2$-coalescent and its related annihilator process (see \cite{MR2353033}).  More precisely,
the whole process without labels is the $\Lambda_2$-coalescent and the restriction to \textit{primary} elements and blocks is the annihilator process. 
\end{rem} 
\subsubsection{Coalescencent times and \textit{first marking times}} The above construction of two-type coalescents shows that coalescences happen only at the \textit{marking times}. This property will help us to understand the coalescent times of singletons in terms of their \textit{first marking times}.

\begin{lem}\label{rela2}
Let $\Pi$ be the path of $\Pi^{(1,n)}$ chosen at the Step 0 of the construction of two-type $\Lambda$ coalescent.  Assume that at some time $t>0$,  $\{1\}\in \Pi(t)$, $|\Pi(t)|=m$ with $2\leq m\leq n$. Let $P_{1,2}^{(n, m)}(t)$ be the probability for $\{1\}$ to be coalesced at its \textit{first marking time} within $[0, t)$. Then we have 
\begin{equation}\label{1}
P_{1,2}^{(n,m)}(t)\geq P_t^{(2, m)}:=\sum_{i=1}^{+\infty}\mathbb{E}[\ind_{S_i^{(2)}<t}\Delta^{(2)}_i\left(1-(1-\Delta^{(2)}_i)^{m-1}\right)],
\end{equation}
where $\Delta_1^{(2)}=\eta_1^{(2)}$; $\Delta_i^{(2)}=\eta_i^{(2)}\prod_{j=1}^{i-1}(1-\eta_j^{(2)})$ for $i>1$. Notice that the parameter $n$ is hidden in $P_t^{(2,m)}$.\end{lem}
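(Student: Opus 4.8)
The plan is to fix the path $\Pi$ and the randomness $(\eta^{(2)}_j)_{j\ge1}$, $(S^{(2)}_j)_{j\ge1}$ (abbreviating $\eta^{(2)}:=(\eta^{(2)}_j)_{j\ge1}$), to decompose the event according to the index $i$ of the first marking time of $\{1\}$, and, for each $i$, to exhibit inside $\tilde\Pi_{1,2}^{(n)}$ exactly $m-1$ mutually independent ``witness threads'' — one for each block of $\Pi(t)$ other than $\{1\}$ — whose first ``Head'' times reproduce the distribution appearing in $P_t^{(2,m)}$.

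Two structural remarks come first. Since coalescences of $\tilde\Pi_{1,2}^{(n)}$ happen only at the marking times $S^{(2)}_i$, the block of $1$ can merge only at a marking time where it is marked ``Head'', and the first such time is precisely its first marking time. Moreover, because $1$ is the smallest element of any block it ever lies in and $\{1\}\in\Pi(t)$, the restriction by the smallest element always keeps the block of $1$ equal to the block of $\Pi$ containing $1$, i.e.\ equal to $\{1\}$ on $[0,t]$, as long as $\{1\}$ has not been marked ``Head''. Hence the event ``$\{1\}$ is coalesced at its first marking time within $[0,t)$'' equals: there is $i$ with $S^{(2)}_i<t$ such that the block $\{1\}$ is marked ``Tail'' at $S^{(2)}_1,\dots,S^{(2)}_{i-1}$, ``Head'' at $S^{(2)}_i$, and at least one other block of $\tilde\Pi_{1,2}^{(n)}(S^{(2)}_i)$ is also marked ``Head'' at $S^{(2)}_i$.

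Write $\Pi(t)=\{\{1\},A_2,\dots,A_m\}$ and fix $v_j\in A_j$ for $j=2,\dots,m$. Let $X$ be the index of the first ``Head'' received by the block of $1$, and, for $j\ge2$, let $Y_j$ be the index of the first ``Head'' received by the block of $\tilde\Pi_{1,2}^{(n)}$ containing $v_j$. The key point is that a block which has so far escaped every ``Head'' mark has never taken part in a merger, hence has been modified only by the restriction by the smallest element, which — since $v_j\in A_j\in\Pi(t)$ — keeps it inside $A_j$ on $[0,t]$; likewise the block of $1$ stays equal to $\{1\}$. Therefore, at each marking time $S^{(2)}_k<t$, the blocks attached to $1$ and to those $v_j$ not yet marked ``Head'' lie in the pairwise disjoint sets $\{1\},A_2,\dots,A_m$, so the marks they receive at $S^{(2)}_k$ are independent Bernoulli$(\eta^{(2)}_k)$ draws. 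Revealing the marks marking‑time by marking‑time, this yields that, conditionally on $\eta^{(2)}$ (and $(S^{(2)}_j)_j$), the variables $X,Y_2,\dots,Y_m$ are mutually independent with $\mathbb{P}(X=i\mid\eta^{(2)})=\mathbb{P}(Y_j=i\mid\eta^{(2)})=\Delta^{(2)}_i$; moreover, on $\{Y_j=i\}$ the block of $v_j$ at time $S^{(2)}_i$ is marked ``Head'' and, lying in $A_j$ while the block of $1$ is $\{1\}$, is distinct from it. Hence on $\{X=i\}\cap\{Y_j=i\text{ for some }j\ge2\}\cap\{S^{(2)}_i<t\}$ the singleton $\{1\}$ is indeed coalesced at its first marking time $S^{(2)}_i<t$, and these events are disjoint over $i$, so
\begin{align*}
P_{1,2}^{(n,m)}(t)\ &\ge\ \sum_{i\ge1}\mathbb{E}\Big[\ind_{S^{(2)}_i<t}\,\mathbb{P}\big(X=i\mid\eta^{(2)}\big)\,\mathbb{P}\big(\exists\,j\ge2:Y_j=i\mid\eta^{(2)}\big)\Big]\\
&=\sum_{i\ge1}\mathbb{E}\big[\ind_{S^{(2)}_i<t}\,\Delta^{(2)}_i\big(1-(1-\Delta^{(2)}_i)^{m-1}\big)\big]=P_t^{(2,m)}.
\end{align*}

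The only delicate step is the structural claim of the third paragraph: one must check that the successive restrictions by the smallest element occurring in the measure division construction never push a hitherto-``Tail'' block outside the $\Pi(t)$-block of one of its elements, and keep $\{1\}$ a singleton on $[0,t]$. I expect this — an induction over the marking times interleaved with the restriction operations — to be the main obstacle; everything else is bookkeeping of independent Bernoulli marks and the fact, already recorded, that $\tilde\Pi_{1,2}^{(n)}$ coalesces only at the marking times.
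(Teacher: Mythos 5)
The pivotal step of your argument --- the ``structural claim'' of your third paragraph, which you yourself flag as the main obstacle and do not prove --- is false as stated, and this is a genuine gap. A block that has so far escaped every ``Head'' mark \emph{can} take part in a merger of $\tilde{\Pi}_{1,2}^{(n)}$ before time $t$: between marking times the process still follows the (restricted) $\Lambda_1$-dynamics, and the restriction by the smallest element unites a never-``Head'' block with any previously formed ``Head''-merged block whose smallest element falls into the same $\Pi$-block as its own smallest element. Concretely, take $n=4$, let $\Pi$ merge $\{2\}$ and $\{3\}$ at a time $u\in(S_1^{(2)},t)$ and have no other collision before $t$, so $\Pi(t)=\{\{1\},\{2,3\},\{4\}\}$ and $m=3$; if at $S_1^{(2)}$ the blocks $\{2\}$ and $\{4\}$ are marked ``Head'' (and $\{1\},\{3\}$ ``Tail''), they merge into $\{2,4\}$, and at time $u$ the restriction by the smallest element unites $\{3\}$ with $\{2,4\}$ because $2$ and $3$ lie in the same $\Pi(u)$-block. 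Thus the block containing $3$, which has never been marked ``Head'', becomes $\{2,3,4\}\not\subseteq\{2,3\}$: neither ``has never taken part in a merger'' nor ``stays inside $A_j$'' holds, so your justification of the conditional independence of $X,Y_2,\dots,Y_m$ (and hence of the factorization $\Delta_i^{(2)}\bigl(1-(1-\Delta_i^{(2)})^{m-1}\bigr)$) breaks down exactly at the point you left open; with arbitrary representatives $v_j\in A_j$ two still-``Tail'' representatives can even end up depending on the same block's marks through such absorptions of secondary material.

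The repair is the choice the paper actually makes: take $v_j=i_j:=\min A_j$ (so $i_1=1$). One then proves, by induction over the marking times, using that $\Pi(s)$ refines $\Pi(t)$ for $s\le t$ and that the restriction by the smallest element groups blocks according to the $\Pi(s)$-block of their minima, the weaker invariant that as long as the block containing $i_j$ has never been marked ``Head'' its \emph{smallest element} remains $i_j$ (in particular the block of $1$ remains $\{1\}$ on $[0,t]$), even though this block may absorb secondary elements lying outside $A_j$. The tracked blocks then have pairwise distinct minima lying in the disjoint sets $\{1\},A_2,\dots,A_m$, hence are pairwise distinct at every marking time before $t$, which is all that your Bernoulli bookkeeping needs; with this substitution your computation coincides with the paper's proof (which uses precisely the smallest elements $i_1,\dots,i_m$). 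As written, however, the proposal rests on an incorrect intermediate claim that is left unverified, so it does not yet constitute a proof.
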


\begin{proof}Let $i_1,\cdots, i_m$ be the $m$ smallest elements respectively in each block at time $t$ with $1=i_1\leq i_2\leq \cdots \leq i_m\leq n$.

Conditional on $(S_i^{(2)}, \eta_i^{(2)})_{i\geq 1}$,  $\Delta_i^{(2)}$ is the probability for $\{1\}$ to have its \textit{first marking time} at $S_i^{(2)}$ (assume that $S_i^{(2)}\leq t$). To let $\{1\}$ be coalesced at $S_i^{(2)}$, one needs also at least one other block marked "Head" at that time. To get a lower bound of $P_{1,2}^{(n,m)}(t)$, one can consider the propability to have at least one \textit{primary} block containing one element of $\{i_1,\cdots, i_m\}$ to be marked "Head" at that time and this probability is $1-(1-\Delta^{(2)}_i)^{m-1}.$
\end{proof}

\begin{lem}\label{rela1}
In addition to the assumptions in the previous lemma, we assume further that all $\{i\}\in \Pi(t)$ for $1\leq i\leq k$ and $1\leq k \leq m$. Define the probability $P_{1,2}^{(n, m, k)}(t)$ for every $\{i\}$ to be coalesced at its \textit{first marking time} within $[0,t)$. Then we have

\begin{equation}\label{1k}
P_{1,2}^{(n, m, k)}(t)\geq 1-k(1-P_t^{(2, m)}).
\end{equation} 
\end{lem}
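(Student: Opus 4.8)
The plan is to prove the bound by a union bound over the $k$ singletons, invoking Lemma~\ref{rela2} once for each of them. Throughout, all probabilities are understood conditionally on the path $\Pi$ fixed at Step~0 of the construction, which by hypothesis satisfies $|\Pi(t)|=m$ and $\{i\}\in\Pi(t)$ for every $1\le i\le k$. For $1\le i\le k$ let $A_i$ be the event that the singleton $\{i\}$ is coalesced at its \emph{first marking time} within $[0,t)$, so that by definition $P_{1,2}^{(n,m,k)}(t)=\mathbb{P}\big(\bigcap_{i=1}^{k}A_i\big)$.

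The key point is that Lemma~\ref{rela2}, although stated for $\{1\}$, applies equally to each $\{i\}$ with $i\le k$, giving $\mathbb{P}(A_i)\ge P_t^{(2,m)}$ and hence $\mathbb{P}(A_i^{c})\le 1-P_t^{(2,m)}$. Indeed, the lower bound $P_t^{(2,m)}$ depends only on $m$, on $t$, and on the marking data $(S_j^{(2)},\eta_j^{(2)})_{j\ge 1}$, and its derivation in Lemma~\ref{rela2} only records which of the $m$ ancestral lineages present at a marking time $S_j^{(2)}<t$ are \emph{primary} there and whether they are marked ``Head''; none of this bookkeeping uses that the tracked singleton is labelled $1$, only that $\{i\}$ is one of the $m$ blocks of $\Pi(t)$. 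With this in hand, the union (Bonferroni) bound yields
\begin{equation*}
P_{1,2}^{(n,m,k)}(t)=\mathbb{P}\Big(\bigcap_{i=1}^{k}A_i\Big)=1-\mathbb{P}\Big(\bigcup_{i=1}^{k}A_i^{c}\Big)\ge 1-\sum_{i=1}^{k}\mathbb{P}(A_i^{c})\ge 1-k\big(1-P_t^{(2,m)}\big),
\end{equation*}
which is the claimed inequality.

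The only step that is not completely routine — and the nearest thing to an obstacle — is the transfer of Lemma~\ref{rela2} from $\{1\}$ to an arbitrary $\{i\}$, $i\le k$: one cannot simply appeal to exchangeability of the whole construction, because the restriction by the smallest element breaks the symmetry among the labels. The resolution is the observation just made, namely that the \emph{lower bound} of Lemma~\ref{rela2} is produced entirely by counting Head/Tail outcomes at the marking times and is therefore insensitive to the special role played by the smallest element.
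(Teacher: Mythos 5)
Your proposal is correct and follows essentially the same route as the paper: a union (Bonferroni) bound over the $k$ singletons combined with the observation that the lower bound of Lemma~\ref{rela2} applies to each $\{i\}$, $1\le i\le k$, since its derivation only uses that the tracked block is one of the $m$ blocks of $\Pi(t)$ and the Head/Tail marking data. The only cosmetic difference is that the paper first invokes symmetry to replace each $\{i\}$ by $\{1\}$ and then says the argument of Lemma~\ref{rela2} still applies under the conditioning, whereas you bound each term directly; both yield \reff{1k}.
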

\begin{proof}
Let $E=\{\forall 1\leq i\leq k, \{i\}\in \Pi(t); |\Pi(t)|=m\}$,  which denotes the assumptions of $\Pi(t)$ in this Lemma. Then 
\begin{align*}
P_{1,2}^{(n, m, k)}(t)&=\mathbb{P}(\{1\},\cdots, \{k\} \, \text{coalesce at their}  \,\textit{first marking times}  \, \text{within} \,[0,t)|E)\\
&=1-\mathbb{P}(\text{one of} \, \{\{1\},\cdots, \{k\}\} \, \text{does not coalesce at its} \, \textit{first marking times} \, \text{within} [0,t)|E)\\
&\geq 1-\mathbb{P}(\text{none of} \{\{1\},\cdots, \{k\}\} \, \text{coalesce at their} \, \textit{first marking times}\, \text{within} [0,t)|E)\\
&\geq 1-\sum_{i=1}^k\mathbb{P}( \{i\} \,\text{does not coalesce at its} \, \textit{first marking time}\, \text{within} [0,t)|E)\\
&=1-k(1-\mathbb{P}( \{1\} \,\text{coalesces at its} \, \textit{first marking time}\, \text{within} [0,t)|E))\\
&\geq 1-k(1-P_t^{(2, m)}).
\end{align*}

The  last inequality is due to the fact that 
$$\mathbb{P}( \{1\} \,\text{coalesces at its} \, \textit{first marking time}\, \text{within} [0,t)|E))\geq P_t^{(2, m)},$$
which is true due to the same arguments used in the proof of the last Lemma. 

\end{proof}

If $m, t$ are large enough such that under some assumptions, we could prove that $P_t^{(2,m)}$ is very close to $1$.  Then the coalescent times are almost the \textit{first marking times} which are easier to deal with. In section 3.3, we will see such a situation for $\Lambda$ satisfying condition $(\ref{gnmu})$ and $\Lambda_1=\Lambda\mathbf{1}_{[0,1/n)}, \Lambda_2=\Lambda\mathbf{1}_{[1/n,1]}.$ The following corollary studies the \textit{first marking times} in this particular case.
\begin{cor}\label{lll}
Let $t>0$ and $1\leq k\leq n$. Assume that $\Lambda$ satisfies condition $(\ref{gnmu})$ and $\Lambda_1=\Lambda\ind_{[0,1/n)}$, $\Lambda_2=\Lambda\ind_{[1/n, 1]}$. Let $\Pi$ be a path of $\Pi^{(1, n)}$. Recall that $L_i^{(2, n)}$ is the \textit{first marking time} of $\{i\}$ for $1\leq i\leq n.$
\begin{itemize}
\item If $\{1\}\in \Pi(t/\mu^{(n)})$, then for any $0\leq t_1\leq t$, $\mathbb{P}(L_1^{(2, n)}\geq \frac{t_1}{\mu^{(n)}}|\Pi)=e^{-t}$
\item Assume moreover $\int_0^1x^{-1}\Lambda(dx)=+\infty$ and $\{i\}\in \Pi(t/\mu^{(n)})$ for any $1\leq i\leq k$ with $1\leq k\leq n$ and fixed. Let $0\leq t_1\leq t_2\leq \cdots \leq t_k\leq t$, we then have 
\begin{equation}\label{mis2}
\lim_{n\rightarrow +\infty}\mathbb{P}(L_i^{(2, n)}\geq \frac{t_i}{\mu^{(n)}}, \forall 1\leq i\leq k|\Pi)=e^{-\sum_{i=1}^kt_i}.
\end{equation}
\end{itemize}
\end{cor}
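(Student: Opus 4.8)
The plan is to compute $\mathbb{P}\big(L_i^{(2,n)}\ge t_i/\mu^{(n)}\ \forall\,i\le k\,\big|\,\Pi\big)$ exactly for each $n$ and then pass to the limit. Recall from the construction of the two-type $\Lambda$-coalescent that, with $\Lambda_2=\Lambda\ind_{[1/n,1]}$ and $\bar\mu^{(n)}=\int_{1/n}^1 x^{-2}\Lambda(dx)$, the marking times $(S_j^{(2)})_{j\ge1}$ are the successive jump times of a Poisson process of rate $\bar\mu^{(n)}$, the weights $(\eta_j^{(2)})_{j\ge1}$ are i.i.d.\ with law $x^{-2}\Lambda(dx)\ind_{[1/n,1]}/\bar\mu^{(n)}$, all independent of the path $\Pi$, and that, conditionally on $(S_j^{(2)},\eta_j^{(2)})_{j\ge1}$ and $\Pi$, at each $S_j^{(2)}$ every block is declared ``Head'' independently with probability $\eta_j^{(2)}$. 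Writing $I^{(n)}_m:=\int_{1/n}^1\big(1-(1-x)^m\big)x^{-2}\Lambda(dx)$, we then have, for every $m\ge1$,
\begin{equation}\label{pp:eta}
\bar\mu^{(n)}\big(\mathbb{E}[(1-\eta_1^{(2)})^m]-1\big)=\int_{1/n}^1\big((1-x)^m-1\big)x^{-2}\Lambda(dx)=-I^{(n)}_m.
\end{equation}

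The pathwise ingredient is the following \emph{faithful tracking} property: granting $\{i\}\in\Pi(t/\mu^{(n)})$ for all $i\le k$, if $\{i\}$ has not been declared ``Head'' at any of $S_1^{(2)},\dots,S_{j-1}^{(2)}$ and $S_j^{(2)}<t/\mu^{(n)}$, then $\{i\}$ is still a singleton block of $\Pi_{1,2}^{(n)}$ just before the $j$-th marking operation, hence it is again eligible to receive an independent ``Head'' declaration with probability $\eta_j^{(2)}$. Indeed, a singleton block is always its own smallest element, so under a restriction by the smallest element it is represented by its single element; the two-type construction alters the original path $\Pi$ only through a sequence of restrictions by the smallest element performed at marking times (finitely many before any fixed time), producing successively coarser partitions $\xi^{(1)},\xi^{(2)},\dots$ in which $\{i\}$ remains a singleton as long as $i$ has not been placed in a ``Head'' block. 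An induction over these modification times, based on the defining formula of the restriction by the smallest element (cf.\ the proof of Lemma \ref{restby}), shows that the block of $\Pi_{1,2}^{(n)}$ containing $i$ stays equal to $\{i\}$ until $i$ is either merged by the underlying $\Lambda_1$-dynamics --- which, since $\{i\}\in\Pi(t/\mu^{(n)})$, does not occur before $t/\mu^{(n)}$ --- or placed in a ``Head'' block. I expect this pathwise step to require the most care; the remainder is a generating-function computation.

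Granting this, fix $0\le t_1\le\cdots\le t_k\le t$ and put $t_0:=0$. Condition on $\Pi$ and on $(S_j^{(2)},\eta_j^{(2)})_{j\ge1}$. By definition, $\{L_i^{(2,n)}\ge t_i/\mu^{(n)}\ \forall\,i\le k\}$ holds if and only if, for every $a$ and every marking time $S_j^{(2)}\in[t_{a-1}/\mu^{(n)},t_a/\mu^{(n)})$, none of the $k-a+1$ blocks $\{a\},\dots,\{k\}$ is declared ``Head'' at $S_j^{(2)}$. Processing these conditions in increasing order of $S_j^{(2)}$ and using the faithful tracking property (so that the relevant blocks are still singletons at $S_j^{(2)}$ and get independent ``Head'' declarations of probability $\eta_j^{(2)}$), the conditional probability of the event equals $\prod_{a=1}^k\prod_{j:\,S_j^{(2)}\in[t_{a-1}/\mu^{(n)},t_a/\mu^{(n)})}(1-\eta_j^{(2)})^{k-a+1}$. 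Averaging first over the i.i.d.\ weights $\eta_j^{(2)}$ and then over the numbers $N_a:=\#\{j:S_j^{(2)}\in[t_{a-1}/\mu^{(n)},t_a/\mu^{(n)})\}$, which are independent and $\mathrm{Poisson}\big(\bar\mu^{(n)}(t_a-t_{a-1})/\mu^{(n)}\big)$, the Poisson probability generating function together with \reff{pp:eta} yields
\begin{equation}\label{pp:exact}
\mathbb{P}\big(L_i^{(2,n)}\ge t_i/\mu^{(n)}\ \forall\,i\le k\,\big|\,\Pi\big)=\exp\!\Big(-\sum_{a=1}^k\frac{t_a-t_{a-1}}{\mu^{(n)}}\,I^{(n)}_{k-a+1}\Big).
\end{equation}

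It remains to analyse \reff{pp:exact}. For $k=1$, $I^{(n)}_1=\int_{1/n}^1 x\cdot x^{-2}\Lambda(dx)=\mu^{(n)}$, so the right-hand side is $e^{-t_1}$ exactly for every $n$; taking $t_1=t$ gives the first assertion. For a general fixed $k$, write $1-(1-x)^m=x\sum_{l=0}^{m-1}(1-x)^l$, so that $I^{(n)}_m=\sum_{l=0}^{m-1}\int_{1/n}^1(1-x)^l x^{-1}\Lambda(dx)$ and, since $0\le 1-(1-x)^l\le l\,x$ on $[0,1]$, $0\le m\mu^{(n)}-I^{(n)}_m\le\binom{m}{2}\Lambda([0,1])$. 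Under the additional hypothesis $\int_0^1 x^{-1}\Lambda(dx)=+\infty$ we have $\mu^{(n)}\to+\infty$, hence $I^{(n)}_m/\mu^{(n)}\to m$ for each fixed $m$. Since $k$ is fixed and $\sum_{a=1}^k(t_a-t_{a-1})(k-a+1)=\sum_{i=1}^k t_i$, the exponent in \reff{pp:exact} converges to $-\sum_{i=1}^k t_i$, which is \reff{mis2}.
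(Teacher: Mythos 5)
Your proof is correct and follows essentially the same route as the paper's: condition on the Poisson numbers of marking times in each time interval, average the i.i.d.\ factors $(1-\eta_j^{(2)})^{m}$ via the Poisson probability generating function, and use $\mu^{(n)}\to+\infty$ (from $\int_0^1x^{-1}\Lambda(dx)=+\infty$) to make the lower-order term vanish. You go slightly further than the paper, which only writes out $k=2$, by deriving the exact formula for general $k$ and by spelling out the pathwise fact that unmarked singletons survive the successive restrictions by the smallest element, a point the paper leaves implicit.
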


\begin{proof}
The first case is easy to see, due to the definition of $L_1^{(2,n)}$. For the second case, we only consider $k=2.$ For $k>2$, the proof is similar. Assume that within $[0, t_1/\mu^{(n)}]$, there are $N_1$ \textit{marking time}s and for $(t_1/\mu^{(n)}, t_2/\mu^{(n)}]$, there are $N_2$ \textit{marking time}s. $N_1$ and $N_2$ are independently Poisson distributed with parameters respectively $\frac{\bar{\mu}^{(n)}t_1}{\mu^{(n)}}$ and $\frac{\bar{\mu}^{(n)}(t_2-t_1)}{\mu^{(n)}}$ (here we have $\bar{\mu}^{(n)}=\int_{1/n}^1x^{-2}\Lambda(dx)=\int_{1/n}^1x^{-2}\Lambda_2(dx)$).  Then we get 

\begin{align*}
&\mathbb{P}(L_1^{(2, n)}\geq t_1/\mu^{(n)}, L_2^{(2, n)}\geq t_2/\mu^{(n)}|\Pi)\\&=\mathbb{E}[\Pi_{i=1}^{N_1}(1-\eta^{(2)}_i)^2\Pi_{i=N_1+1}^{N_1+N_2}(1-\eta^{(2)}_i)]\\
&=\mathbb{E}[(1-2\mathbb{E}[\eta^{(2)}_1]+\mathbb{E}[(\eta^{(2)}_1)^2])^{N_1}]\mathbb{E}[(1-\mathbb{E}[\eta^{(2)}_1])^{N_2}]\\
&=exp\left(\frac{\bar{\mu}^{(n)}t_1}{\mu^{(n)}}(-2\mathbb{E}[\eta^{(2)}_1]+\mathbb{E}[(\eta^{(2)}_1)^2])\right)exp\left(\frac{\bar{\mu}^{(n)}(t_2-t_1)}{\mu^{(n)}}(-\mathbb{E}[\eta^{(2)}_1])\right),
\end{align*}
where the last equality is due to the probability generating function of Poisson distribution. Recall that $\mathbb{E}[\eta^{(2)}_1]=\frac{\mu^{(n)}}{\bar{\mu}^{(n)}}$ and $\mathbb{E}[(\eta^{(2)}_1)^2]=\frac{\int_{1/n}^1\Lambda(dx)}{\bar{\mu}^{(n)}}$. Therefore,
$$\frac{\bar{\mu}^{(n)}}{\mu^{(n)}}\mathbb{E}[\eta^{(2)}_1]=\frac{\int_{1/n}^{1}x^{-1}\Lambda(dx)}{\mu^{(n)}}=1, \text{and}\quad \frac{\bar{\mu}^{(n)}}{\mu^{(n)}}\mathbb{E}[(\eta^{(2)}_1)^2]=\frac{\int_{1/n}^{1}\Lambda(dx)}{\mu^{(n)}}\stackrel{}{\rightarrow 0}. $$

Then we can conclude (\ref{mis2}). 

\end{proof}

\subsection{A tripling}
We often have some results on the coalescent related to a special measure, for example, the $Beta$-coalescent. When the process is perturbed by a noise measure, we would wonder whether this damage is negligible. One example is to estimate the number of blocks of the coalescent related to the noise measure. To this aim, we use the tool of tripling. 

\textbf{Tripling:} Notice that  $\Pi^{(n)}$ encounters its first collision after time $e_1^{(n)}$, which is a random variable.  At this collision, the number of blocks is reduced to $n-W_1^{(n)}$, where $W_1^{(n)}$ is a positive integer valued random variable. Then we add $W_1^{(n)}$ new blocks (these blocks can contain any number belonging to $\{n+1, n+2, \cdots \}$) and consider the whole new $n$ ones.  By the consistency property, the evolution of the original $n-W_1^{(n)}$ blocks can be embedded into that of the new $n$ blocks, i.e. after time $e_2^{(n)}$, we have the collision in the new $n$ blocks whose total number is reduced to $n-W_2^{(n)}$ and  we can calculate the distribution of the number of blocks coalesced among the original $n-W_1^{(n)}$ blocks (we call any block containing at least one of $\{1,2,\cdots, n\}$ as "original block" and it is very possible that nothing  happens for the $n-W_1^{(n)}$ blocks). Then we add again new blocks containing different elements to have another $n$ ones. This procedure is stopped when every element of $\{1,2,\cdots, n\}$ is contained in one block. By the definition of $\Lambda$-coalescent,  $(e_i^{(n)})_{i\geq 1}$ are independent exponential random variables with parameter $g_n$ and $(W_i^{(n)})_{i\geq 1}$ are  i.i.d copies of $X_1^{(n)}$.  

The above procedure gives a tripling of $(e_i^{(n)})_{i\geq 1}$, $(W_i^{(n)})_{i\geq 1}$ and $\Pi^{(n)}$. We define $V_i^{(n)}:=\sum_{j=1}^{i}e_j^{(n)}, i\in \mathbb{N}.$ Then we have the following proposition:

\begin{prop}\label{tripleprop} Suppose that $(e_i^{(n)})_{i\geq 1}$, $(W_i^{(n)})_{i\geq 1}$ and $\Pi^{(n)}$ are tripled, then at any time $t\geq 0$, we have 
\begin{equation}\label{triple}
n-\sum_{i=0}^{N(\Lambda,n, t)}W_i^{(n)}\leq |\Pi^{(n)}(t)|,
\end{equation}
where $N(\Lambda,n, t):=card\{i| V_i^{(n)}\leq t \}$, which is Poisson distributed with parameter $g_nt$ and independent of $(W_i^{(n)})_{i\geq 1}$. Meanwhile, 
\begin{equation}\label{w}\mathbb{E}[W_i^{(n)}]=\frac{n\int_0^1(1-(1-x)^{n-1})x^{-1}\Lambda(dx)}{g_n}-1, \text{and} \quad\mathbb{E}[(W_i^{(n)})^2]=\frac{n(n-1)\int_0^1\Lambda(dx)}{g_n}-\mathbb{E}[W_i^{(n)}].\end{equation}
\end{prop}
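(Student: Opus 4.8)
The plan is to establish the two parts of Proposition \ref{tripleprop} separately: the pathwise inequality \reff{triple}, and the moment formulas \reff{w}. For the inequality, I would argue directly from the tripling construction. At each step $i$ of the tripling, the original blocks (those containing at least one element of $\{1,\dots,n\}$) lose at most $W_i^{(n)}$ of their number, since the collision reduces the total block count by exactly $W_i^{(n)}$ and new (non-original) blocks only ever get added, never removed from among the originals by a collision without the originals losing correspondingly. More carefully: by the consistency property the original-block process is itself a $\Lambda$-coalescent on $\{1,\dots,n\}$ whose jumps are dominated, jump by jump, by the jumps $W_i^{(n)}$ of the enlarged chain (the restriction of a merger merges at most as many original blocks as total blocks involved). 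Hence after the collisions occurring up to time $t$, i.e. those indexed $i$ with $V_i^{(n)}\le t$, the number of original blocks has decreased from $n$ by at most $\sum_{i} W_i^{(n)}$ over those indices; writing $N(\Lambda,n,t)=\mathrm{card}\{i\mid V_i^{(n)}\le t\}$ and noting $|\Pi^{(n)}(t)|$ equals the number of original blocks at time $t$, we get $n-\sum_{i=0}^{N(\Lambda,n,t)}W_i^{(n)}\le |\Pi^{(n)}(t)|$. The claim about $N(\Lambda,n,t)$ is immediate: $(V_i^{(n)})$ is a rate-$g_n$ Poisson process because $(e_i^{(n)})$ are i.i.d.\ $Exp(g_n)$, so $N(\Lambda,n,t)$ is Poisson with parameter $g_n t$, and it is independent of $(W_i^{(n)})_{i\ge 1}$ since in the tripling the collision times and the collision sizes are generated independently (the sizes $W_i^{(n)}$ being i.i.d.\ copies of $X_1^{(n)}$, drawn from the jump distribution $p_{n,\cdot}$, independently of the clock).

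For the first moment in \reff{w}, I would compute $\mathbb{E}[W_i^{(n)}]=\mathbb{E}[X_1^{(n)}]=\sum_{k=2}^{n}(k-1)p_{n,n-k+1}$ using \reff{pbk}: this equals $g_n^{-1}\sum_{k=2}^n (k-1)\binom{n}{k}\lambda_{n,k}$. Then I would insert the integral form $\lambda_{n,k}=\int_0^1 x^{k-2}(1-x)^{n-k}\Lambda(dx)$, exchange sum and integral, and recognize the resulting binomial sum. Writing $(k-1)\binom{n}{k}=n\binom{n-1}{k-1}-\binom{n}{k}$ (or equivalently $\sum_k k\binom nk x^{k-2}(1-x)^{n-k} = n x^{-1}\sum_k \binom{n-1}{k-1}x^{k-1}(1-x)^{n-k}$), the sum $\sum_{k=2}^n (k-1)\binom{n}{k}x^{k-2}(1-x)^{n-k}$ collapses, via the binomial theorem applied to $(x+(1-x))^{n-1}$ and $(x+(1-x))^n$ minus their low-order terms, to $n x^{-1}(1-(1-x)^{n-1}) - x^{-2}(1-(1-x)^n - nx(1-x)^{n-1})$. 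Integrating against $\Lambda(dx)$, the second piece is exactly $g_n$ (by \reff{gn1}, and one checks the $\Lambda(\{0\})$ contribution is consistent), giving $\mathbb{E}[W_i^{(n)}] = g_n^{-1}\big(n\int_0^1(1-(1-x)^{n-1})x^{-1}\Lambda(dx)\big) - 1$, as claimed.

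For the second moment, I would similarly write $\mathbb{E}[(W_i^{(n)})^2]=\mathbb{E}[(X_1^{(n)})^2]=g_n^{-1}\sum_{k=2}^n (k-1)^2\binom{n}{k}\lambda_{n,k}$. Using $(k-1)^2=(k-1)+(k-1)(k-2)$, split into the already-computed piece $\mathbb{E}[W_i^{(n)}] + 1$ times $g_n$ wait — more precisely into $g_n^{-1}\sum (k-1)\binom nk\lambda_{n,k}$ (which contributes $\mathbb{E}[W_i^{(n)}]+1$) plus $g_n^{-1}\sum_{k=2}^n (k-1)(k-2)\binom{n}{k}\lambda_{n,k}$. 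For the new sum, use $(k-1)(k-2)\binom{n}{k} = n(n-1)\binom{n-2}{k-2}$, insert $\lambda_{n,k}=\int x^{k-2}(1-x)^{n-k}\Lambda(dx)$, and recognize $\sum_{k=2}^n \binom{n-2}{k-2}x^{k-2}(1-x)^{n-k} = (x+(1-x))^{n-2}=1$; hence this term is $g_n^{-1} n(n-1)\int_0^1\Lambda(dx)$. Combining, $\mathbb{E}[(W_i^{(n)})^2] = g_n^{-1}n(n-1)\int_0^1\Lambda(dx) + \mathbb{E}[W_i^{(n)}] + 1 - 1$; after double-checking the $+1/-1$ bookkeeping one obtains exactly $\frac{n(n-1)\int_0^1\Lambda(dx)}{g_n} - \mathbb{E}[W_i^{(n)}]$ — I would verify the sign by the small-case $\Lambda=\delta_0$ (Kingman), where $X_1^{(n)}\equiv 1$ so both sides should give $1$, forcing the $-\mathbb{E}[W_i^{(n)}]$ rather than $+$.

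The routine part is the binomial algebra; the one place to be careful — and the main obstacle — is the pathwise domination argument for \reff{triple}: one must justify cleanly that, in the tripling, the number of \emph{original} blocks merged at collision $i$ is at most $W_i^{(n)}$ (the total merged), and that adding fresh blocks after each collision genuinely does not disturb the law of the original-block process — this is precisely where the consistency property of $\Lambda$-coalescents, and the independence of collision sizes from the exponential clock, are used. Once that is granted, the inequality follows by summing the per-collision decrements over $\{i: V_i^{(n)}\le t\}$ and noting there may additionally be a partial (pending) collision that only helps the inequality.
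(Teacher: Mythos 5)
Your treatment of the inequality \reff{triple} is essentially the paper's own argument: at each collision time $V_i^{(n)}$ the decrease in the number of original blocks is at most $W_i^{(n)}$ (by consistency/restriction), the collision times form a rate-$g_n$ Poisson process, and the jump sizes are drawn independently of the clock; summing the decrements over $\{i: V_i^{(n)}\le t\}$ gives \reff{triple}. Where you genuinely diverge from the paper is \reff{w}: the paper does not compute these moments at all, it cites them from Delmas--Dhersin--Siri-J\'egousse (Eq.\ (17) and p.\ 1007 of that paper), whereas you derive them directly from the jump law $p_{n,n-k+1}={n\choose k}\lambda_{n,k}/g_n$. That is a perfectly good, self-contained alternative, and your first-moment computation is correct: $\sum_{k=2}^n(k-1){n\choose k}x^{k-2}(1-x)^{n-k}=nx^{-1}(1-(1-x)^{n-1})-x^{-2}(1-(1-x)^n-nx(1-x)^{n-1})$, whose integral against $\Lambda$ gives exactly $g_n(\mathbb{E}[W_1^{(n)}]+1)-g_n$ as needed (modulo the usual continuity convention for a possible atom at $0$).

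However, your second-moment derivation as written does not go through. The identity you invoke, $(k-1)(k-2){n\choose k}=n(n-1){n-2\choose k-2}$, is false (take $k=2$: the left side is $0$, the right side is $n(n-1)$); the correct identity is $k(k-1){n\choose k}=n(n-1){n-2\choose k-2}$. Also, $g_n^{-1}\sum_{k=2}^n(k-1){n\choose k}\lambda_{n,k}$ is exactly $\mathbb{E}[W_1^{(n)}]$, not $\mathbb{E}[W_1^{(n)}]+1$. With these two slips your computation actually lands on $\frac{n(n-1)\int_0^1\Lambda(dx)}{g_n}+\mathbb{E}[W_1^{(n)}]$, i.e.\ the wrong sign, and the final formula is only recovered by asserting that the bookkeeping works out (your Kingman sanity check does detect the error, but it does not repair the derivation). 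The clean fix is to decompose $(k-1)^2=k(k-1)-(k-1)$: then $\sum_{k=2}^nk(k-1){n\choose k}x^{k-2}(1-x)^{n-k}=n(n-1)$ by the binomial theorem, so $g_n\mathbb{E}[(W_1^{(n)})^2]=n(n-1)\int_0^1\Lambda(dx)-g_n\mathbb{E}[W_1^{(n)}]$, which is precisely \reff{w}. (Alternatively, keep your split $(k-1)^2=(k-1)+(k-1)(k-2)$ but expand $(k-1)(k-2)=k(k-1)-2(k-1)$ and use the correct identity; this again yields the minus sign.)
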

\begin{proof}
The number of $i$s within $[0,t]$ follows the Poisson distribution with parameter $g_nt$. Due to the tripling, at any time $V_i^{(n)}$ with $0\leq V_i^{(n)}\leq t$, the decrease of number of  blocks (i.e. $|\Pi^{(n)}(V_i^{(n)}-)|-|\Pi^{(n)}(V_i^{(n)})|$) of original blocks is less than or equal to $W_i^{(n)}$. Hence we get (\ref{triple}). Notice that $W_i^{(n)}\stackrel{(d)}{=} X_1^{(n)}$,  then (\ref{w}) is a consequence of two equalities in \cite{DDS2008} with Eq (17) for the first one and p.1007 for the second one. \end{proof}

\section{Applications to coalescents satisfying condition $(\ref{gnmu})$}
\subsection{Characterization of condition $(\ref{gnmu})$.} Some notations for this section: Let $\Lambda$ be a finite measure on $[0,1]$ and $\Lambda_1=\Lambda\ind_{[0,1/n)}$, $\Lambda_2=\Lambda\ind_{[1/n, 1]}$ ; $\mu^{(1/y)}=\int_y^1x^{-1}\Lambda(dx)$, $g_{1/y}=\int_{0}^{1}(1-(1-x)^{1/y}-\frac{1}{y}x(1-x)^{1/y-1})x^{-2}\Lambda(dx)$ with $0<y\leq 1.$ Notice that the definitions of $\mu^{(1/y)}$ and $g_{1/y}$ are consistent with that of $\mu^{(n)}$ and $g_n$ when $\Lambda(\{0\})=0$. These notations help to examine carefully different measures. 

Here we are going to prove Theorem \ref{gnmu0}, Theorem \ref{kd}, Corollary \ref{momk}, Corollary \ref{ext} and Corollary \ref{total}. Under condition $(\ref{gnmu})$, we decompose $\Lambda$ into $\Lambda_2$ and $\Lambda_1$. The idea is to construct $\Pi^{(n)}$ using measure division construction with noise measure $\Lambda_1$ and main measure $\Lambda_2$. At first, we need to show more details implied by condition $(\ref{gnmu})$. For any real number $x$, let $\lfloor x \rfloor=\max\{y; y\in \mathbb{Z}, y\leq x\}$ and $\lceil x \rceil=\min\{y; y\in \mathbb{Z}, y\geq x\}$
\begin{prop}\label{muequi} The following two assertions are equivalent:

$(*)$: $\Lambda$ satisfies condition $(\ref{gnmu})$;

$(**)$: $\Lambda(\{0\})=0$ and there exists a c\`agl\`ad (limit from right, continuous from left) function $f:[0,1]\rightarrow [0,1]$, continuous at $0$ with $f(0)=0$ such that $\int_0^1\mu^{(1/x)}dx<+\infty$ and

\begin{equation}\label{desmu}\mu^{(1/y)}=\left(\int_0^1\mu^{(1/x)}dx\right)exp\left(\int_y^{1}\frac{f(t)}{t}dt)(1-f(y)\right), 0<y\leq 1. \end{equation}
 \end{prop}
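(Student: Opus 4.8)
The plan is to first turn (\ref{desmu}) into an explicit formula for $f$, and then to translate condition (\ref{gnmu}) into a statement about that formula by means of two elementary comparison bounds for $g_n$. We may assume $\Lambda$ is not the null measure, the statement being vacuous otherwise. Since $\Lambda(\{0\})=0$ (forced by (\ref{gnmu}) as in Remark~\ref{nonzero}, and assumed in $(**)$), Fubini gives
$$\int_0^y\mu^{(1/x)}\,dx=\int_0^1\frac{y\wedge t}{t}\,\Lambda(dt)=\Lambda([0,y))+y\,\mu^{(1/y)},\qquad 0<y\le1,$$
so $\int_0^1\mu^{(1/x)}\,dx=\Lambda([0,1])<+\infty$ and the integrability clause of $(**)$ is automatic. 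Writing $M:=\int_0^1\mu^{(1/x)}\,dx$ and $G(y):=M\exp\big(\int_y^1 f(t)t^{-1}\,dt\big)$, relation (\ref{desmu}) reads $\mu^{(1/y)}=G(y)(1-f(y))$; since $G$ is locally absolutely continuous on $(0,1]$ with $G'(y)=-G(y)f(y)/y$ a.e., this forces $(yG)'=\mu^{(1/y)}$ a.e.\ and $G(1)=M$, and integrating from $y$ to $1$ gives $yG(y)=\int_0^y\mu^{(1/x)}dx$; conversely, taking this $G$, one checks $-\frac{d}{dy}\ln G(y)=f(y)/y$ a.e.\ and $G(1)=M$, so (\ref{desmu}) holds. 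Therefore (\ref{desmu}) holds if and only if
$$f(y)=1-\frac{\mu^{(1/y)}}{G(y)}=\frac{\Lambda([0,y))}{\int_0^y\mu^{(1/x)}\,dx}=\frac{\Lambda([0,y))}{\Lambda([0,y))+y\,\mu^{(1/y)}},\qquad 0<y\le1,$$
and with $f(0):=0$ this $f$ is automatically $[0,1]$-valued (numerator $\le$ denominator) and c\`agl\`ad (ratio of the c\`agl\`ad map $y\mapsto\Lambda([0,y))$ by the continuous positive map $y\mapsto\int_0^y\mu^{(1/x)}dx$). So the only content of $(**)$ beyond this formula is that $f$ is continuous at $0$, i.e.\ $\Lambda([0,y))=o\big(y\,\mu^{(1/y)}\big)$ as $y\to0^+$.

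Next I would establish the two-sided bound $g_n\asymp n^2\Lambda([0,1/n))+\bar{\mu}^{(n)}$ for $n$ large, where $\bar{\mu}^{(n)}:=\int_{1/n}^1x^{-2}\Lambda(dx)$. The upper bound is that of Example~1, from $0\le 1-(1-x)^n-nx(1-x)^{n-1}\le\min\{1,\tfrac{n^2}{2}x^2\}$ split at $x=1/n$. For the lower bound, $x\mapsto 1-(1-x)^n-nx(1-x)^{n-1}$ is non-decreasing, so on $[1/n,1]$ it exceeds $1-(1-1/n)^{n-1}(2-1/n)$, a quantity bounded away from $0$; and on $(0,1/n)$ a Taylor expansion gives $1-(1-x)^n-nx(1-x)^{n-1}\ge c_0n^2x^2$ for a universal $c_0>0$. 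The same bounds hold with $n$ replaced by the real parameter $1/y$, and moreover $g_{1/y}$ and $\mu^{(1/y)}$ are non-decreasing in $1/y$ (for $g$, since $s\mapsto 1-(1-x)^s-sx(1-x)^{s-1}$ is non-decreasing by $\ln(1-x)\le-x$). Hence (\ref{gnmu}) is equivalent to the conjunction $n^2\Lambda([0,1/n))=o(n\mu^{(n)})$ \emph{and} $\bar{\mu}^{(n)}=o(n\mu^{(n)})$.

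With these in hand both implications are short. For $(*)\Rightarrow(**)$: the first condition at $y=1/n$ is $\Lambda([0,1/n))=o((1/n)\mu^{(n)})$; since $\mu^{(n+1)}-\mu^{(n)}\le(n+1)\Lambda([0,1/n))=o(\mu^{(n)})$ one has $\mu^{(n+1)}\le2\mu^{(n)}$ eventually, so by monotonicity $\frac{g_{1/y}}{(1/y)\mu^{(1/y)}}\le\frac{g_{n+1}}{n\mu^{(n)}}\le4\frac{g_{n+1}}{(n+1)\mu^{(n+1)}}\to0$ for $y\in(1/(n+1),1/n]$, and then $g_{1/y}\ge c_0(1/y)^2\Lambda([0,y))$ gives $\frac{\Lambda([0,y))}{y\mu^{(1/y)}}\le c_0^{-1}\frac{g_{1/y}}{(1/y)\mu^{(1/y)}}\to0$, i.e.\ $f(y)\to0=f(0)$. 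For $(**)\Rightarrow(*)$: continuity of $f$ at $0$ (at $y=1/n$) is exactly $n^2\Lambda([0,1/n))=o(n\mu^{(n)})$, and granting the second condition, $\frac{g_n}{n\mu^{(n)}}\le\tfrac12\frac{n^2\Lambda([0,1/n))}{n\mu^{(n)}}+\frac{\bar{\mu}^{(n)}}{n\mu^{(n)}}\to0$. Both directions therefore reduce to a single implication, valid for every finite $\Lambda$ with $\Lambda(\{0\})=0$:
$$n\,\Lambda([0,1/n))=o\big(\mu^{(n)}\big)\ \Longrightarrow\ \bar{\mu}^{(n)}=o\big(n\,\mu^{(n)}\big).$$

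This last implication is the step I expect to be the main obstacle, and I would prove it contrapositively by a dyadic decomposition. Partition $[1/n,1]$ into the blocks $[2^k/n,2^{k+1}/n)$, on which $x^{-2}\le(n/2^k)x^{-1}$, so that $\bar{\mu}^{(n)}\le n\sum_{k\ge0}2^{-k}\nu_k$ where $\nu_k:=\int_{2^k/n}^{2^{k+1}/n}x^{-1}\Lambda(dx)$ and $\sum_k\nu_k=\mu^{(n)}$. If $\bar{\mu}^{(n_j)}\ge\varepsilon n_j\mu^{(n_j)}$ along some subsequence $n_j\to\infty$, then choosing $K$ with $2^{-K-1}\le\varepsilon/2$ forces $\sum_{k\le K}\nu_k\ge\tfrac\varepsilon2\mu^{(n_j)}$, hence $n_j\Lambda\big([0,R/n_j)\big)\ge\tfrac\varepsilon2\mu^{(n_j)}$ with $R:=2^{K+1}$ a fixed integer; setting $m_j:=\lfloor n_j/R\rfloor$ and using $\mu^{(m_j)}\le\mu^{(n_j)}$ then yields $\frac{m_j\Lambda([0,1/m_j))}{\mu^{(m_j)}}\ge\frac{\varepsilon}{4R}$ for all large $j$, contradicting $n\Lambda([0,1/n))=o(\mu^{(n)})$. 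This closes the remaining gap and completes both implications.
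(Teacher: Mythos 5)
Your proposal is correct, and while its first half runs parallel to the paper's, the decisive step is handled by a genuinely different argument. Like the paper, you reduce condition (\ref{gnmu}) to the two tail conditions $n^2\Lambda([0,1/n))=o(n\mu^{(n)})$ and $\bar{\mu}^{(n)}=o(n\mu^{(n)})$ by the same elementary comparisons of the integrand, and you pin down the only admissible $f$ through $1-f(y)=y\mu^{(1/y)}\big/\int_0^y\mu^{(1/x)}dx$; your Fubini identity $\int_0^y\mu^{(1/x)}dx=\Lambda([0,y))+y\mu^{(1/y)}$ is a clean substitute for the paper's integration by parts, with the pleasant by-product that the integrability clause of $(**)$ is automatic (the integral equals $\Lambda([0,1])$). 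The real divergence is in $(**)\Rightarrow(*)$: the paper exploits the exponential representation (\ref{desmu}) itself, writing $\bar{\mu}^{(n)}/(n\mu^{(n)})\leq 1/M+1-\mu^{(n/M)}/\mu^{(n)}$ and showing $\mu^{(n/M)}/\mu^{(n)}$ is close to $1$ via $exp(-\int_{1/n}^{M/n}f(x)x^{-1}dx)$ with $f$ small near $0$ (a slow-variation argument), whereas you prove directly, by a contrapositive dyadic decomposition of $[1/n,1]$, that the single discrete condition $n\Lambda([0,1/n))=o(\mu^{(n)})$ already forces $\bar{\mu}^{(n)}=o(n\mu^{(n)})$. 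Your route is more elementary (it never uses the representation of $\mu^{(1/y)}$ in terms of $f$) and yields a slightly stronger intermediate statement, since only the left-tail smallness along integers is needed rather than continuity of $f$ at $0$ for real $y$; the paper's route, in exchange, makes the slowly varying behaviour of $\mu^{(n)}$ explicit in the form (\ref{desmu}), which is precisely what gets reused later (Corollary \ref{maincor} and the estimate (\ref{betan}) in the proof of Corollary \ref{momk}). Your interpolation from integer $n$ to real $y$ in $(*)\Rightarrow(**)$ (monotonicity of $g_s$ and $\mu^{(s)}$ in $s$ together with $\mu^{(n+1)}\leq 2\mu^{(n)}$ eventually, plus the pointwise lower bound $g_{1/y}\geq c_0 (1/y)^2\Lambda([0,y))$) also differs from the paper's floor/ceiling comparison, but serves the same purpose and is sound.
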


\begin{proof}

\textbf{Part 1}:We first assume that $(*)$ is true. If $\Lambda$ satisfies $(\ref{gnmu})$, then $\Lambda(\{0\})=0$ due to Remark \ref{nonzero}. For $\mu^{(n)}\neq 0$, we have

\begin{align*}
\frac{g_n}{n\mu^{(n)}}&=\frac{\int_0^1(1-(1-x)^n-nx(1-x)^{n-1})x^{-2}\Lambda(dx)}{n\mu^{(n)}}=I_1^{(n)}+I_2^{(n)},
\end{align*}
where $I_1^{(n)}=\frac{\int_{1/n}^1(1-(1-x)^n-nx(1-x)^{n-1})x^{-2}\Lambda(dx)}{n\mu^{(n)}}$, $I_2^{(n)}=\frac{\int_0^{1/n}(1-(1-x)^n-nx(1-x)^{n-1})x^{-2}\Lambda(dx)}{n\mu^{(n)}}$. Notice that for $n$ large,  using monotone property, we have $\frac{e-2}{2e}\frac{\int_{1/n}^1x^{-2}\Lambda(dx)}{n\mu^{(n)}}\leq I_1^{(n)}\leq \frac{\int_{1/n}^1x^{-2}\Lambda(dx)}{n\mu^{(n)}}$ and $\frac{1}{3}\frac{n\int_0^{1/n}\Lambda(dx)}{\mu^{(n)}}\leq I_2^{(n)}\leq \frac{n\int_0^{1/n}\Lambda(dx)}{\mu^{(n)}}.$ Hence condition $(\ref{gnmu})$ is equivalent to 
\begin{equation}\label{2con}
\lim_{n\rightarrow +\infty}\frac{\int_{1/n}^1x^{-2}\Lambda(dx)}{n\mu^{(n)}}=0, \text{and} \lim_{n\rightarrow +\infty}\frac{n\int_0^{1/n}\Lambda(dx)}{\mu^{(n)}}=0, \Lambda(\{0\})=0.
\end{equation}
Then we deduce that 
\begin{equation}\label{secon}
\lim_{y\rightarrow 0+}\frac{\int_0^y\Lambda(dx)}{y\mu^{(1/y)}}=0, \Lambda(\{0\})=0.
\end{equation}
Indeed, for $ 1/y > 2$ and $\mu^{(\lfloor 1/y \rfloor)}\neq 0$, we have
$$\frac{\int_0^y\Lambda(dx)}{y\mu^{(1/y)}}= \frac{\int_0^y\Lambda(dx)}{y\int_{y}^{1}x^{-1}\Lambda(dx)}\leq \frac{\int_0^{1/\lfloor 1/y \rfloor}\Lambda(dx)}{\frac{1}{\lceil 1/y \rceil}\int_{1/\lfloor 1/y \rfloor}^1x^{-1}\Lambda(dx)}=\frac{\lceil 1/y \rceil}{\lfloor 1/y \rfloor}\frac{\lfloor 1/y \rfloor\int_0^{1/\lfloor 1/y \rfloor}\Lambda(dx)}{\int_{1/\lfloor 1/y \rfloor}^1x^{-1}\Lambda(dx)}\stackrel{y\rightarrow 0+}{\rightarrow}0. $$
One thing to notice is that $\displaystyle \lim_{y\rightarrow 0+}y\mu^{(1/y)}=0$ is true for any finite $\Lambda.$ In fact, for any positive number $M$ and $yM<1$, we have 
\begin{align*}
y\mu^{(1/y)}=y\int_y^1x^{-1}\Lambda(dx)=y\int_{yM}^1x^{-1}\Lambda(dx)+y\int_y^{yM}x^{-1}\Lambda(dx)\leq \frac{\int_0^1\Lambda(dx)}{M}+\int_y^{yM}\Lambda(dx),
\end{align*}
where both terms can be made as small as we want by taking $M$ large enough and $y$ close enough to $0$.
Looking into details of $\frac{\int_0^y\Lambda(dx)}{y\mu^{(1/y)}}$ when $\mu^{(1/y)}\neq 0$, we have the following equality, using integration by parts and $\displaystyle \lim_{y\rightarrow 0+}y\mu^{(1/y)}=0$,
\begin{align}\label{parts}
\frac{\int_0^y\Lambda(dx)}{y\mu^{(1/y)}}&=\frac{\int_0^yxx^{-1}\Lambda(dx)}{y\mu^{(1/y)}}=\frac{\int_0^y\mu^{(1/x)}dx-y\mu^{(1/y+)}}{y\mu^{(1/y)}},
\end{align}
where $\mu^{(1/y+)}=\mu^{(1/y)}-y^{-1}\Lambda(\{y\})$.  Due to  (\ref{secon}), we get that $1\geq \frac{\mu^{(1/y+)}}{\mu^{(1/y)}}=1-\frac{\Lambda(\{y\})}{y\mu^{(1/y)}}\geq 1-\frac{\int_0^y\Lambda(dx)}{y\mu^{(1/y)}}\rightarrow 1$. Therefore,  (\ref{secon}) and (\ref{parts}) give
$$\displaystyle \lim_{y\rightarrow 0+}\frac{y\mu^{(1/y)}}{\int_0^y\mu^{(1/x)}dx}=1. $$
Notice that $\int_0^y\mu^{(1/x)}dx\geq y\mu^{(1/y)}$ and $\mu^{(1/y)}$ is a c\`agl\`ad function. Hence there exists a c\`agl\`ad function $f:[0,1]\rightarrow [0,1]$, continuous at $0$ with $f(0)=0$ such that 
\begin{equation}\label{befinal}
\frac{y\mu^{(1/y)}}{\int_0^y\mu^{(1/x)}dx}=1-f(y).
\end{equation}

Now let $G(t)=\int_0^t\mu^{(1/x)}dx$ and any derivative will be considered as left derivative. 
Then (\ref{befinal}) becomes

$$(\ln G(t))'=\frac{G(t)'}{G(t)}=\frac{1-f(t)}{t}.$$ 
Using the fundamental theorem of Newton and Leibniz which also works for c\`agl\`ad functions whose primitive functions take left derivatives. Then for $0<y\leq 1$,
$$\ln G(1)-\ln G(y)=\int_{y}^1(\ln G(t))'dt=\int_y^1\frac{1-f(t)}{t}dt.$$ 
Therefore, 
$$G(y)=G(1)exp(-\int_y^1\frac{1-f(t)}{t}dt).$$
By taking the left derivatives on the both sides and noticing that $G(1)=\int_0^1\mu^{(1/x)}dx$, we can conclude.

\textbf{Part 2}: We now assume that $(**)$ is true. In the first part, we proved implicitly that (\ref{secon}) is equivalent to the $(**)$. Hence we will use (\ref{secon}) to prove (\ref{2con}) which is equivalent to condition $(\ref{gnmu})$ and only the first convergence in (\ref{2con}) is needed to be proved. Let $M$ be a positive number and $\frac{M}{n}\leq 1$, $\mu^{(n)}\neq 0$, then 
\begin{align*}
\frac{\int_{1/n}^1x^{-2}\Lambda(dx)}{n\mu^{(n)}}&=\frac{\int_{M/n}^{1}x^{-2}\Lambda(dx)}{n\mu^{(n)}}+\frac{\int_{1/n}^{M/n}x^{-2}\Lambda(dx)}{n\mu^{(n)}}\\
&\leq \frac{1}{M}+1-\frac{\mu^{(n/M)}}{\mu^{(n)}}.
\end{align*}
The first term can be made as small as we want by taking $M$ large, and the third term $\frac{\mu^{(n/M)}}{\mu^{(n)}}=exp(-\int_{1/n}^{M/n}\frac{f(x)}{x}ds)\frac{1-f(M/n)}{1-f(1/n)}.$ Let $\epsilon>0$ and $n$ large enough such that $f(x)\leq \epsilon$ on $[0,  M/n]$. Then $\frac{\mu^{(n/M)}}{\mu^{(n)}}\geq exp(-\epsilon\ln M)(1-\epsilon),$ which can be made as close as possible to $1$ with $\epsilon$ small enough. Hence we can conclude.
\end{proof}
The next corollary is immediate. 
\begin{cor}\label{maincor}
If $\Lambda$ satisfies $(\ref{gnmu})$, then 
\begin{itemize}
\item  $\displaystyle \lim_{n\rightarrow +\infty}\frac{(\mu^{(n)})^k}{n}=0, \forall k>0$;\\
\item $\displaystyle  \lim_{n\rightarrow +\infty}\frac{\mu^{(n)}}{\mu^{(n-M)}}=1, \forall M>0$; \\
\item $\displaystyle  \lim_{n\rightarrow +\infty}\frac{\mu^{(n)}}{\mu^{(n\epsilon)}}=1, \forall 0<\epsilon<1$.
\end{itemize}

\end{cor}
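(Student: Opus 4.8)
The plan is to derive all three limits directly from the explicit representation \reff{desmu} of $\mu^{(1/y)}$ provided by Proposition \ref{muequi}, together with the integrability $\int_0^1\mu^{(1/x)}\,dx<+\infty$ and the fact that $f$ is c\`agl\`ad, $[0,1]$-valued, continuous at $0$ with $f(0)=0$. Throughout I write $c_0:=\int_0^1\mu^{(1/x)}\,dx$, so that $\mu^{(1/y)}=c_0\,\exp\!\left(\int_y^1\frac{f(t)}{t}\,dt\right)(1-f(y))$ for $0<y\le1$.

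For the first assertion, fix $k>0$ and take $y=1/n$. Fix $\epsilon>0$; since $f$ is continuous at $0$ with $f(0)=0$, there is $\delta\in(0,1)$ with $f(t)\le\epsilon/k$ for all $t\in(0,\delta]$. Then for $n$ large enough that $1/n\le\delta$,
\[
\int_{1/n}^1\frac{f(t)}{t}\,dt=\int_{\delta}^1\frac{f(t)}{t}\,dt+\int_{1/n}^{\delta}\frac{f(t)}{t}\,dt\le C_\delta+\frac{\epsilon}{k}\ln(n\delta)\le C_\delta+\frac{\epsilon}{k}\ln n,
\]
where $C_\delta$ does not depend on $n$. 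Using $1-f(y)\le1$ this gives $\mu^{(n)}\le c_0\,e^{C_\delta}\,n^{\epsilon/k}$, hence $(\mu^{(n)})^k\le (c_0 e^{C_\delta})^k n^{\epsilon}$ and $(\mu^{(n)})^k/n\le (c_0 e^{C_\delta})^k n^{\epsilon-1}\to0$ since $\epsilon<1$ may be assumed. (If $\mu^{(n)}=0$ for some $n$ the statement is trivial there.) As $\epsilon>0$ was arbitrary this proves the first item.

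For the second and third assertions it is cleanest to prove the third, since the second follows from it: given $M>0$ fixed, for large $n$ one has $n-M\ge n/2$, so $\mu^{(n)}/\mu^{(n-M)}$ is squeezed between $1$ and $\mu^{(n)}/\mu^{(2n)}$ which is the $\epsilon=1/2$ case of the third item (monotonicity of $y\mapsto\mu^{(1/y)}$ in the sense that $\mu^{(1/y)}$ increases as $y$ decreases is used here, giving $\mu^{(n-M)}\ge\mu^{(n/2)}$ and $\mu^{(n-M)}\le\mu^{(n)}$). For the third item, fix $0<\epsilon<1$ and take $y=1/n$, $y'=1/(n\epsilon)$, so $y'=y/\epsilon>y$. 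From \reff{desmu},
\[
\frac{\mu^{(1/y')}}{\mu^{(1/y)}}=\exp\!\left(-\int_{y}^{y/\epsilon}\frac{f(t)}{t}\,dt\right)\cdot\frac{1-f(y/\epsilon)}{1-f(y)}.
\]
As $n\to\infty$ we have $y,y/\epsilon\to0$, so $f(y)\to0$ and $f(y/\epsilon)\to0$ by continuity of $f$ at $0$, making the last ratio tend to $1$; and the integral satisfies $0\le\int_{y}^{y/\epsilon}\frac{f(t)}{t}\,dt\le\big(\sup_{(0,y/\epsilon]}f\big)\ln(1/\epsilon)\to0$ again because $f\to0$ at $0$. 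Hence $\mu^{(n)}/\mu^{(n\epsilon)}=\big(\mu^{(1/y')}/\mu^{(1/y)}\big)^{-1}\to1$, which is the third item; the second then follows as indicated, and the proof is complete.

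The only mild subtlety — not really an obstacle — is the bookkeeping around points where $\mu^{(n)}=0$ (equivalently $\Lambda((1/n,1])=0$), where ratios are undefined; but if $\mu^{(n)}=0$ then $\mu^{(m)}=0$ for all $m\le n$ as well, so this can only happen for finitely many $n$ unless $\Lambda\equiv0$ on some $(0,1]$, in which case condition \reff{gnmu} forces $\Lambda$ to be the null measure and every statement is vacuously true; I would dispose of this case at the outset. The substantive content is entirely the continuity of $f$ at $0$ feeding through the logarithmic integral in \reff{desmu}.
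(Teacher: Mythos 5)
Your proof is correct and follows exactly the route the paper intends: the paper calls the corollary "immediate" from Proposition \ref{muequi}, i.e.\ from the representation \reff{desmu} with $f$ vanishing continuously at $0$ and $\int_0^1\mu^{(1/x)}dx<+\infty$, which is precisely what you spell out. One harmless typo: in the second item the upper bound should read $\mu^{(n)}/\mu^{(n/2)}$ rather than $\mu^{(n)}/\mu^{(2n)}$, as your own parenthetical inequalities $\mu^{(n/2)}\leq\mu^{(n-M)}\leq\mu^{(n)}$ already indicate.
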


\subsection{Properties of $\Pi^{(1, n)}$.}

We should next estimate the coalescent process related to the noise measure $\Lambda_1$ which serves as a perturbation to the main measure $\Lambda_2$. At first, one needs a technical result.
\begin{lem}\label{lemgnlambda}We assume that $\Lambda(\{0\})=0.$ Let $g_n^{(1)}=\int_0^1(1-(1-x)^n-nx(1-x)^{n-1})x^{-2}\Lambda_{1}(dx)$ in the spirit of (\ref{gn1}). Then there exists a positive constant $C_1$ such that for $n$ large enough
\begin{equation}\label{gnlambda}g_n^{(1)}\geq C_1n^2\int_0^{1/n}\Lambda_1(dx).
\end{equation}
\end{lem}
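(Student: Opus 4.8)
The plan is to exploit the elementary inequality $1-(1-x)^n-nx(1-x)^{n-1}\geq c\,n^2x^2$ valid on a suitable range of $x$, and then to integrate only over that range. Recall that $\Lambda_1=\Lambda\ind_{[0,1/n)}$, so the integral defining $g_n^{(1)}$ is really over $[0,1/n)$; this is exactly the range where the quadratic lower bound is effective. More precisely, for $0\leq x\leq 1/n$ one has $nx\leq 1$, and a short Taylor/convexity argument shows that the function $h(x):=1-(1-x)^n-nx(1-x)^{n-1}$, which vanishes to second order at $x=0$, satisfies $h(x)\geq c\,(nx)^2$ for some absolute constant $c>0$ uniformly on $x\in[0,1/n]$ (for instance by writing $h(x)=\sum_{k=2}^{n}\binom{n}{k}x^k(1-x)^{n-k}$, keeping the $k=2$ term, and bounding $(1-x)^{n-2}\geq(1-1/n)^{n-2}\geq e^{-1}/2$ for $n$ large).

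The key steps, in order, would be: first, establish the pointwise bound $h(x)\geq c\,n^2x^2$ on $[0,1/n]$ with $c$ independent of $n$ (for $n$ large enough); second, restrict the defining integral of $g_n^{(1)}$ to $[0,1/n)$ — which changes nothing since $\Lambda_1$ is supported there — and insert the bound to get
\begin{equation*}
g_n^{(1)}=\int_0^{1/n}h(x)\,x^{-2}\Lambda_1(dx)\geq \int_0^{1/n}c\,n^2x^2\cdot x^{-2}\Lambda_1(dx)=c\,n^2\int_0^{1/n}\Lambda_1(dx);
\end{equation*}
third, set $C_1=c$ and conclude. Note that the assumption $\Lambda(\{0\})=0$ is what guarantees $g_n^{(1)}$ is genuinely given by the formula in the statement (the $x^{-2}$ form of $g_n$) rather than having an extra Kingman term.

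The only mildly delicate point — and the one I would write out carefully — is the uniform constant in the pointwise inequality $h(x)\geq c\,n^2x^2$ on $[0,1/n]$: one must check that the suppressed lower-order terms $\binom{n}{k}x^k(1-x)^{n-k}$ for $k\geq 3$ do not spoil the bound and, more importantly, that the factor $(1-x)^{n-2}$ stays bounded below by a positive constant on the whole interval $[0,1/n]$, which is where the restriction $x\le 1/n$ is essential (outside this range $(1-x)^n$ decays and the quadratic bound fails). Everything else is a direct substitution, so this is the main — and really the only — obstacle.
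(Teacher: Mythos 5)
Your proof is correct, and it reaches the conclusion by a somewhat different and in fact more streamlined route than the paper's. The paper splits the support $[0,1/n)$ of $\Lambda_1$ at the point $1/(nM)$ for an auxiliary constant $M>2$: on $[0,1/(nM))$ it keeps the quadratic term of $h(x)=1-(1-x)^n-nx(1-x)^{n-1}$ with a cubic correction, giving the factor $\tfrac14 n^2$, and on $[1/(nM),1/n)$ it uses monotonicity of $h$ (so $h(x)\geq h(1/(nM))\geq C(M)>0$ for $n$ large) together with $x^{-2}\geq n^2$, finally taking $C_1=\min\{1/4,C(M)\}$; this two-interval argument is why the statement carries the proviso ``for $n$ large enough''. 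You instead establish a single uniform pointwise bound on all of $[0,1/n]$ via the binomial representation
\begin{equation*}
h(x)=\sum_{k=2}^{n}\binom{n}{k}x^k(1-x)^{n-k}\;\geq\;\binom{n}{2}x^2(1-x)^{n-2}\;\geq\;\frac{n(n-1)}{2}\,x^2\,\Bigl(1-\frac{1}{n}\Bigr)^{n-2}\;\geq\;\frac{n^2x^2}{4e}\qquad (n\geq 2),
\end{equation*}
and then integrate against $x^{-2}\Lambda_1(dx)$. Two remarks. First, the point you flag as delicate is actually automatic: in the binomial representation every summand is nonnegative, so dropping the terms $k\geq 3$ can only decrease the sum and nothing about them needs checking; moreover $(1-1/n)^{n-2}\geq(1-1/n)^{n-1}\geq e^{-1}$ holds for every $n\geq 2$, so your constant is valid for all $n\geq 2$ and the ``$n$ large'' restriction is not even needed in your version. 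Second, your equality $g_n^{(1)}=\int_0^{1/n}h(x)x^{-2}\Lambda_1(dx)$ uses the standing convention $\Lambda_1=\Lambda\ind_{[0,1/n)}$ of Section 3 (which the paper's proof also uses); even without it, restricting the nonnegative integrand to $[0,1/n)$ yields the claimed inequality with ``$\geq$'' in place of ``$=$''. What your approach buys is the elimination of the auxiliary parameter $M$ and of the two-interval case analysis, at no cost in generality.
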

\begin{proof}
Let $M>2$. We write 
\begin{align*}
g_n^{(1)}&=\int_0^1(1-(1-x)^n-nx(1-x)^{n-1})x^{-2}\Lambda_1(dx)\\
&=\int_0^{\frac{1}{n}}(1-(1-x)^n-nx(1-x)^{n-1})x^{-2}\Lambda_1(dx)\\
&=I_1+I_2,
\end{align*}
where $I_1=\int_0^{\frac{1}{nM}}(1-(1-x)^n-nx(1-x)^{n-1})x^{-2}\Lambda_1(dx)$ and $I_2=\int_{\frac{1}{nM}}^{\frac{1}{n}}(1-(1-x)^n-nx(1-x)^{n-1})x^{-2}\Lambda_1(dx).$ It is easy to see that for $n\geq 2$, 
\begin{align*}
I_1&\geq \int_0^{\frac{1}{nM}}(n(n-1)-n(n-1)(n-2)x)\frac{1}{2}\Lambda_1(dx)\\
&\geq \int_0^{\frac{1}{nM}}(n(n-1)-(n-1)(n-2)/M)\frac{1}{2}\Lambda_1(dx)\\
&\geq \frac{1}{4}\int_0^{\frac{1}{nM}}n^2\Lambda_1(dx).
\end{align*}
For the second term, 
\begin{align*}
I_2&\geq \int_{\frac{1}{nM}}^{\frac{1}{n}}(1-(1-\frac{1}{nM})^n-\frac{(1-\frac{1}{nM})^{n-1}}{M})n^2\Lambda_1(dx).
\end{align*}
Notice that for $n$ large, there exists a positive constant $C(M)$ such that 
$$1-(1-\frac{1}{nM})^n-\frac{(1-\frac{1}{nM})^{n-1}}{M}\geq C(M)>0.$$
Hence $I_2\geq C(M)\int_{\frac{1}{nM}}^{\frac{1}{n}}n^2\Lambda_1(dx).$ It suffices to take $C_1=\min\{\frac{1}{4}, C(M)\}$ to conclude.
\end{proof}

The following lemma estimates the coalescent process related to the noise measure $\Lambda_1$ when $\Lambda$ satisfies $(\ref{gnmu})$. Recall that $\Pi^{(1,n)}$ is the $\Lambda_1$-coalescent process with $\Pi^{(1,n)}(0)=\{\{1\},\{2\},\cdots, \{n\}\}$.
\begin{lem}\label{noise}
Assume that $\Lambda$ satisfy $(\ref{gnmu})$. Then for any $M>0$, $0< \epsilon \leq 1$ and $n$ large enough,  we have
\begin{equation}\label{lambda1<}
\mathbb{P}\left(|\Pi^{(1, n)}(M/\mu^{(n)})|\leq n-n\epsilon\right)=o(n^{-1}).
\end{equation}
\end{lem}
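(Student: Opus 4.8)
The plan is to dominate $\Pi^{(1,n)}$ by the tripling of Proposition \ref{tripleprop} applied with characteristic measure $\Lambda_1=\Lambda\ind_{[0,1/n)}$ at time $t=M/\mu^{(n)}$, and then to run a crude second-moment estimate. We may assume $\int_0^{1/n}\Lambda(dx)>0$ and $\mu^{(n)}>0$ for $n$ large: if $\Lambda_1=0$ then $\Pi^{(1,n)}$ is the constant partition and the probability in (\ref{lambda1<}) is exactly $0$, while condition (\ref{gnmu}) forces $\Lambda(\{0\})=0$ by Remark \ref{nonzero}, so $\mu^{(n)}=\int_{1/n}^1x^{-1}\Lambda(dx)>0$ eventually unless $\Lambda\equiv0$. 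Writing $g_n^{(1)}$, $(W_i^{(1,n)})_{i\ge0}$ (i.i.d.\ copies of $X_1^{(1,n)}$) and $N:=N(\Lambda_1,n,M/\mu^{(n)})$ (Poisson with parameter $\lambda_n:=g_n^{(1)}M/\mu^{(n)}$, independent of the $W_i^{(1,n)}$) for the objects produced by Proposition \ref{tripleprop} applied to $\Pi^{(1,n)}$, the bound (\ref{triple}) gives
\[
\mathbb{P}\!\left(|\Pi^{(1,n)}(M/\mu^{(n)})|\le n-n\epsilon\right)\le\mathbb{P}\!\left(\sum_{i=0}^{N}W_i^{(1,n)}\ge n\epsilon\right),
\]
so it suffices to bound the right-hand side.

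The first point is that $\lambda_n=o(n)$. Since the integrand $(1-(1-x)^n-nx(1-x)^{n-1})x^{-2}$ is nonnegative and $\Lambda_1\le\Lambda$, one has $g_n^{(1)}\le g_n$, so $\lambda_n\le Mg_n/\mu^{(n)}=Mn\cdot\frac{g_n}{n\mu^{(n)}}=o(n)$ by condition (\ref{gnmu}). The second point is a uniform-in-$n$ bound on the first two moments of $W_1^{(1,n)}$: I would apply the formulas (\ref{w}) with $\Lambda$ replaced by $\Lambda_1$ and $g_n$ by $g_n^{(1)}$. As $\Lambda_1$ is carried by $[0,1/n)$, there $(1-(1-x)^{n-1})x^{-1}\le n-1$, so both numerators in (\ref{w}) are at most $n^2\int_0^{1/n}\Lambda(dx)$, whereas by Lemma \ref{lemgnlambda} the common denominator satisfies $g_n^{(1)}\ge C_1n^2\int_0^{1/n}\Lambda(dx)$ for $n$ large. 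Together with $W_i^{(1,n)}\ge1$ this yields constants $c_1,c_2$, independent of $n$, with $\mathbb{E}[W_1^{(1,n)}]\le c_1$ and $\mathbb{E}[(W_1^{(1,n)})^2]\le c_2$ for all large $n$.

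With these in hand I would conclude by Chebyshev applied to $S:=\sum_{i=0}^{N}W_i^{(1,n)}$. Conditioning on $N$ and using independence, $\mathbb{E}[S]=(\lambda_n+1)\mathbb{E}[W_1^{(1,n)}]\le(\lambda_n+1)c_1$ and $\Var(S)=(\lambda_n+1)\Var(W_1^{(1,n)})+\lambda_n(\mathbb{E}[W_1^{(1,n)}])^2\le c(\lambda_n+1)$ for a constant $c$. Since $\lambda_n=o(n)$, for $n$ large $\mathbb{E}[S]\le n\epsilon/2$, hence
\[
\mathbb{P}\!\left(\sum_{i=0}^{N}W_i^{(1,n)}\ge n\epsilon\right)\le\mathbb{P}\!\left(S-\mathbb{E}[S]\ge n\epsilon/2\right)\le\frac{4c(\lambda_n+1)}{\epsilon^2n^2}=o(n^{-1}),
\]
again because $\lambda_n=o(n)$. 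This gives (\ref{lambda1<}).

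I do not anticipate a serious obstacle: the steps are bookkeeping around results already available. The point worth stressing is that no exponential (Chernoff-type) concentration is needed; condition (\ref{gnmu}) is precisely what makes $\lambda_n=g_n^{(1)}M/\mu^{(n)}=o(n)$, so the plain second-moment estimate already decays like $(\lambda_n+1)/n^2=o(n^{-1})$, exactly the rate demanded. The mildly delicate part is extracting the $n$-uniform moment constants $c_1,c_2$ from (\ref{w}) and Lemma \ref{lemgnlambda}, and disposing of the degenerate cases $\Lambda_1=0$ and $\Lambda\equiv0$.
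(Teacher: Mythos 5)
Your proposal is correct and follows essentially the same route as the paper: dominate $|\Pi^{(1,n)}|$ via the tripling of Proposition \ref{tripleprop} applied to $\Lambda_1$, bound $\mathbb{E}[W_1^{(1,n)}]$ and $\mathbb{E}[(W_1^{(1,n)})^2]$ uniformly using (\ref{w}) together with Lemma \ref{lemgnlambda}, observe that $g_n^{(1)}M/\mu^{(n)}\le Mg_n/\mu^{(n)}=o(n)$ by condition (\ref{gnmu}), and conclude by a Chebyshev bound of order $(\lambda_n+1)/n^2=o(n^{-1})$. The only differences (centering at $\mathbb{E}[S]$ with threshold $n\epsilon/2$ rather than keeping $n\epsilon-\mathbb{E}[S]\asymp n\epsilon$, and the explicit treatment of the degenerate cases) are cosmetic.
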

\begin{proof} If $\int_0^{1/n_0}\Lambda(dx)=0$ with some $n_0>1$, then for any $n>n_0$, $\Lambda_1$ is the null measure and hence $|\Pi^{(1, n)}(t)|=n$ for any $t\geq 0,$ which proves this lemma.  In consequence, one needs only to consider the case where $\int_0^{1/n}\Lambda(dx)\neq0$ for any $n\geq1.$

We recall $g_n^{(1)}$ defined in Lemma \ref{lemgnlambda}. Let $X_1^{(1,n)}$ be the decrease of the number of blocks at the first coalescence of $\Pi^{(1,n)}$. Thanks to Proposition \ref{tripleprop} where we pick up the notations, 
$$n-\sum_{i=1}^{N{(\Lambda_1, n, M/\mu^{(n)})}}W_i^{(n)}\leq|\Pi^{(1, n)}(M/\mu^{(n)})|,$$
where $N{(\Lambda_1, n, M/\mu^{(n)})}$ is Poisson distributed with parameter $\frac{Mg_n^{(1)}}{\mu^{(n)}}$ independent of $(W_i^{(n)})_{i\geq 1}$ which are i.i.d copies of $X_1^{(1, n)}$. Then we have, for $n$ large,
\begin{align}\label{lambda1n}\mathbb{P}(|\Pi^{(1, n)}(M/\mu^{(n)})|\leq n-n\epsilon)&\leq \mathbb{P}\left(n-\sum_{i=1}^{N{(\Lambda_1, n, M/\mu^{(n)})}}W_i^{(n)}\leq n-n\epsilon\right)\nonumber\\
&=\mathbb{P}\left(\sum_{i=1}^{N{(\Lambda_1, n, M/\mu^{(n)})}}W_i^{(n)}-\frac{g_n^{(1)}M}{\mu^{(n)}}\mathbb{E}[W_1^{(n)}]\geq n\epsilon-\frac{g_n^{(1)}M}{\mu^{(n)}}\mathbb{E}[W_1^{(n)}]\right)\nonumber\\
&\leq \frac{\Var(\sum_{i=1}^{N{(\Lambda_{1}, n, M/\mu^{(n)})}}W_i^{(n)})}{(n\epsilon-\frac{g_n^{(1)}M}{\mu^{(n)}}\mathbb{E}[W_1^{(n)}])^2}=\frac{\frac{Mg_n^{(1)}}{\mu^{(n)}}\mathbb{E}[(W_1^{(n)})^2]}{(n\epsilon-\frac{g_n^{(1)}M}{\mu^{(n)}}\mathbb{E}[W_1^{(n)}])^2},
\end{align}
where the second inequality needs $n\epsilon-\frac{g_n^{(1)}M}{\mu^{(n)}}\mathbb{E}[W_1^{(n)}]>0$ which is justified by the following calculations:
Notice that due to Proposition \ref{tripleprop} and Lemma \ref{lemgnlambda}, for $n$ large enough, 
\begin{equation}\label{w1}\mathbb{E}[W_1^{(n)}]+1\leq \frac{n(n-1)\int_0^{1/n}\Lambda_{1}(dx)}{g_n^{(1)}}\leq \frac{1}{C_1}; \mathbb{E}[(W_1^{(n)})^2]\leq \frac{n(n-1)\int_0^{1/n}\Lambda_{1}(dx)}{g_n^{(1)}}\leq\frac{1}{C_1},\end{equation}
where $C_1$ is the positive constant in Lemma \ref{lemgnlambda}. 

Notice that $(\ref{gnmu})$ gives $\frac{g_n^{(1)}}{n\mu^{(n)}}\leq \frac{g_n}{n\mu^{(n)}}\stackrel{}{\rightarrow }0$. Then together with (\ref{w1}), we have 

$$\frac{g_n^{(1)}M}{\mu^{(n)}}\mathbb{E}[W_1^{(n)}]=o(n), \frac{g_n^{(1)}M}{\mu^{(n)}}\mathbb{E}[(W_1^{(n)})^2]=o(n).$$
Hence $n\epsilon-\frac{g_n^{(1)}M}{\mu^{(n)}}\mathbb{E}[W_1^{(n)}]\asymp n\epsilon$. So the inequality (\ref{lambda1n}) is justified and one deduces that 
$$\mathbb{P}(|\Pi^{(\Lambda_{1}, n)}(M/\mu^{(n)})|\leq n-n\epsilon)=o(n^{-1}).$$

Then we conclude (\ref{lambda1<}). \end{proof}

\subsection{Asymptotics of $P^{(2, m)}_t, P_{1,2}^{(n,m)}(t), P_{1,2}^{(n,m, k)}(t), 2\leq m\leq n, t\geq 0$.} 

These terms are probabilities defined in section 2.3.1, which measure the possibility to make one or several singletons coalesced in their \textit{first marking times} within $[0,t)$. In fact, we will study $P^{(2, m)}_{t/\mu^{(n)}}, P_{1,2}^{(n,m)}(t/\mu^{(n)}), P_{1,2}^{(n,m, k)}(t/\mu^{(n)})$, since we want to prove that the normalization factor of the external branch length is $\mu^{(n)}$. We denote by "$\ll$" the stochastic domination between two real random variables. The following corollary together with the remark at the end play an important role in getting the asymptotics of the three probabilities.
\begin{prop}\label{plambda}Suppose that $\Lambda$ satisfies $(\ref{gnmu})$ and $\displaystyle P^{(2,n)}:=\lim_{t\rightarrow +\infty}P_t^{(2, n)}=\sum_{i=1}^{+\infty}\mathbb{E}[\Delta^{(2)}_i\left(1-(1-\Delta^{(2)}_i)^{n-1}\right)]$. Then

\begin{equation}\label{taun}\lim_{n\rightarrow +\infty}P^{(2,n)}=1. \end{equation}
\end{prop}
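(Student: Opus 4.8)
The plan is to show that $P^{(2,n)}=\sum_{i=1}^{\infty}\mathbb{E}[\Delta^{(2)}_i(1-(1-\Delta^{(2)}_i)^{n-1})]\to 1$ by comparing it with a quantity that is manifestly equal to $1$, namely $\sum_{i=1}^{\infty}\mathbb{E}[\Delta^{(2)}_i]$. Indeed, since $\sum_{i\geq 1}\Delta^{(2)}_i=1$ almost surely (the $\Delta^{(2)}_i$ are the successive probabilities of a block being ``Head'' for the first time at marking time $S^{(2)}_i$, and eventually one is marked Head since infinitely many marking attempts occur with $\eta^{(2)}_i$ having positive mean $\mu^{(n)}/\bar\mu^{(n)}>0$), we have $\sum_{i\geq 1}\mathbb{E}[\Delta^{(2)}_i]=1$. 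Hence
\begin{equation*}
1-P^{(2,n)}=\sum_{i=1}^{\infty}\mathbb{E}\big[\Delta^{(2)}_i(1-\Delta^{(2)}_i)^{n-1}\big],
\end{equation*}
and the task reduces to proving this sum tends to $0$ as $n\to\infty$.

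The second step is to bound the summand. I would first fix a small threshold $\delta>0$ and split each expectation according to whether $\Delta^{(2)}_i\leq\delta$ or $\Delta^{(2)}_i>\delta$. On the event $\{\Delta^{(2)}_i>\delta\}$ one has $(1-\Delta^{(2)}_i)^{n-1}\leq(1-\delta)^{n-1}$, and summing over $i$ the contribution is at most $(1-\delta)^{n-1}\sum_i\mathbb{E}[\Delta^{(2)}_i]=(1-\delta)^{n-1}\to 0$. The remaining piece is $\sum_i\mathbb{E}[\Delta^{(2)}_i(1-\Delta^{(2)}_i)^{n-1}\ind_{\Delta^{(2)}_i\leq\delta}]\leq\sum_i\mathbb{E}[\Delta^{(2)}_i\ind_{\Delta^{(2)}_i\leq\delta}]$. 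I must show this can be made small uniformly in $n$ by choosing $\delta$ small. Writing $\Delta^{(2)}_i=\eta^{(2)}_i\prod_{j<i}(1-\eta^{(2)}_j)$ and using independence of the $\eta^{(2)}_j$, the probability that the ``first Head'' occurs at step $i$ with small success probability $\eta^{(2)}_i\leq\delta$ is controlled; the key quantitative input is that $\mathbb{E}[\eta^{(2)}_1]=\mu^{(n)}/\bar\mu^{(n)}$ and this can be small (when the measure is heavy near $0$), but the point is that $\ind_{\Delta^{(2)}_i\leq\delta}$ forces $\eta^{(2)}_i\leq\delta/\prod_{j<i}(1-\eta^{(2)}_j)$, and more usefully one can bound $\sum_i\mathbb{E}[\Delta^{(2)}_i\ind_{\eta^{(2)}_i\leq\delta}]$ by conditioning on $\eta^{(2)}_i$ and using $\mathbb{P}(\text{first head at }i)=\mathbb{E}[\eta^{(2)}_i\prod_{j<i}(1-\eta^{(2)}_j)]$. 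After summing the geometric-type series, the total mass carried by steps whose $\eta$-value is $\leq\delta$ is at most $\mathbb{P}(\eta^{(2)}_1\leq\delta\mid\text{eventually head})$-type expression, which in fact reduces to showing $\mathbb{P}(\eta^{(2)}_1\leq\delta)$ can be controlled — but since $\eta^{(2)}_1$ has the distribution $x^{-2}\Lambda_2(dx)/\bar\mu^{(n)}$ with $\Lambda_2=\Lambda\ind_{[1/n,1]}$, we have $\mathbb{P}(\eta^{(2)}_1\leq\delta)=\int_{1/n}^{\delta}x^{-2}\Lambda(dx)/\bar\mu^{(n)}$.

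The third and crucial step is therefore to control the ratio $\int_{1/n}^{\delta}x^{-2}\Lambda(dx)/\bar\mu^{(n)}$, or more precisely to show the total $\Delta$-mass coming from marking times with $\eta$-value below $\delta$ is asymptotically negligible when $\delta$ is small. This is exactly where condition $(\ref{gnmu})$, in the equivalent forms $(\ref{2con})$ and $(\ref{secon})$ from Proposition \ref{muequi}, enters: the first convergence in $(\ref{2con})$ says $\int_{1/n}^1x^{-2}\Lambda(dx)=\bar\mu^{(n)}=o(n\mu^{(n)})$, while the contribution near scale $1/n$ (where $x^{-1}\approx n$, i.e.\ $\eta\approx$ smallest) is what makes $\Delta_i$ small, and the characterization via the slowly varying-type representation $(\ref{desmu})$ shows $\bar\mu^{(n)}$ is dominated by its contribution away from $1/n$. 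Concretely, for the ``small $\eta$'' region $x\in[1/n,M/n]$ one uses $\mu^{(n/M)}/\mu^{(n)}\to 1$ (Corollary \ref{maincor}) to see $\int_{1/n}^{M/n}x^{-1}\Lambda(dx)=o(\mu^{(n)})$, hence the associated $\eta$-mass (and a fortiori the $\Delta$-mass it generates before a head is obtained) vanishes; for the genuinely ``small-but-not-near-$1/n$'' region one uses a Markov/averaging bound as in Example 1 of the introduction. I expect the main obstacle to be making rigorous the reduction from ``sum of $\Delta^{(2)}_i\ind_{\Delta^{(2)}_i\leq\delta}$'' to a clean integral against $x^{-2}\Lambda(dx)$ — that is, correctly handling the product structure $\prod_{j<i}(1-\eta^{(2)}_j)$ and the dependence between the event $\{\Delta^{(2)}_i\leq\delta\}$ and the index $i$; one clean route is to observe that $\sum_{i}\Delta^{(2)}_i\ind_{\eta^{(2)}_i\leq\delta}$ equals the probability (given the $\eta$'s) that the first head is obtained at a step with success probability $\leq\delta$, then take expectations and bound this by $1-\prod$ over ``only large-$\eta$ steps before the head'', ultimately reducing everything to tail quantities of the $\eta^{(2)}$-distribution that $(\ref{gnmu})$ controls. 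Finally, combining: given $\varepsilon>0$, pick $\delta$ so the small-$\eta$ $\Delta$-mass is $<\varepsilon/2$ for all large $n$, then pick $n$ large so $(1-\delta)^{n-1}<\varepsilon/2$; this yields $1-P^{(2,n)}<\varepsilon$ and hence $(\ref{taun})$.
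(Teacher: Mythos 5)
Your opening reduction (using $\sum_{i\geq 1}\mathbb{E}[\Delta^{(2)}_i]=1$ to reduce the claim to $\sum_{i\geq1}\mathbb{E}[\Delta^{(2)}_i(1-\Delta^{(2)}_i)^{n-1}]\to0$) is correct and is also the paper's first step. The fatal problem is the second step: the assertion that $\sum_{i\geq1}\mathbb{E}[\Delta^{(2)}_i\ind_{\Delta^{(2)}_i\le\delta}]$ can be made small \emph{uniformly in $n$} by choosing a fixed threshold $\delta$ small is false. Take the Bolthausen--Sznitman coalescent (Example 2, which satisfies condition (\ref{gnmu})): there $\eta^{(2)}_1$ has density $x^{-2}/(n-1)$ on $[1/n,1]$, so $\mathbb{P}(\eta^{(2)}_i>\delta)=(\delta^{-1}-1)/(n-1)=O(1/n)$, while the event $\{\Delta^{(2)}_i>\delta\}$ forces both $\eta^{(2)}_i>\delta$ and $\prod_{j<i}(1-\eta^{(2)}_j)>\delta$, so only indices $i$ up to a random time of order $n\ln(1/\delta)/\ln n$ can contribute. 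Hence $\sum_i\mathbb{E}[\Delta^{(2)}_i\ind_{\Delta^{(2)}_i>\delta}]=O(1/\ln n)\to0$, i.e.\ the ``small-$\Delta$'' mass you need to be $<\varepsilon/2$ actually tends to $1$ for every fixed $\delta$. The true mechanism is not that $\Delta^{(2)}$ at the first Head exceeds a fixed level (it tends to $0$ in probability), but that it typically lives on a scale much larger than $1/n$, so that $(1-\Delta^{(2)})^{n-1}$ is still negligible; any workable split must therefore be made at an $n$-dependent level comparable to $1/n$, and then the ``large'' part is no longer killed by a trivial $(1-\delta)^{n-1}$ bound.

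That $n$-dependent analysis is exactly the content you leave as ``the main obstacle,'' and it is where condition (\ref{gnmu}) is really used. The paper symmetrizes, $\mathbb{E}[\Delta^{(2)}_i(1-\Delta^{(2)}_i)^{n-1}]=\mathbb{E}[\bar{\Delta}^{(2)}_i(1-\bar{\Delta}^{(2)}_i)^{n-1}]$ with $\bar{\Delta}^{(2)}_i=\eta^{(2)}_1\prod_{j=2}^{i}(1-\eta^{(2)}_j)$ monotone in $i$, cuts at the stopping time $\tau_n=\min\{i:\bar{\Delta}^{(2)}_i\le 1/n\}$, bounds the sum by $\mathbb{E}[(\tau_n-1)/n]+1/\mathbb{E}[n\eta^{(2)}_1]$, and then estimates $\mathbb{E}[\tau_n]/n$ by renewal theory, concluding from $\mathbb{E}[n\eta^{(2)}_1]\ge C_2\,n\mu^{(n)}/g_n\to+\infty$ and $\mathbb{E}[\ln(n\eta^{(2)}_1)]/\mathbb{E}[n\eta^{(2)}_1]\to0$, with a separate truncation when $\mathbb{E}[-\ln(1-\eta^{(2)}_1)]=+\infty$. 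Nothing in your outline controls how many marking steps occur before the residual product $\prod_{j<i}(1-\eta^{(2)}_j)$ drops below $1/n$ (relative to $n$), and the consequences of (\ref{gnmu}) you invoke ($\mu^{(n/M)}/\mu^{(n)}\to1$, $\int_{1/n}^{M/n}x^{-1}\Lambda(dx)=o(\mu^{(n)})$) do not by themselves substitute for this; the quantitative heart of the proof, namely that $\bar{\mu}^{(n)}=o(n\mu^{(n)})$ forces the first Head to occur while $\bar{\Delta}^{(2)}_i\gg1/n$ with high probability, is missing. So the proposal as written has a genuine gap and its central splitting device would fail on the very examples the proposition is meant to cover.
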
 
\begin{proof} Recall $(\eta^{(2)}_i)_{i\geq 1}$, $(e_i^{(2)})_{i\geq 1}$, $\{\Delta_i^{(2)}\}_{i\geq 1}$ which are associated to $\Lambda_2$ and defined in section 2.3. At first,  we remark that $\sum_{i=1}^{+\infty}\mathbb{E}[\Delta^{(2)}_i]=1.$ One only needs to prove that $\displaystyle \lim_{n\rightarrow +\infty}\sum_{i=1}^{+\infty}\mathbb{E}[\Delta^{(2)}_i(1-\Delta^{(2)}_i)^{n-1}]=0$.
It is easy to see that $\mathbb{E}[\Delta^{(2)}_i(1-\Delta^{(2)}_i)^{n-1}]=\mathbb{E}[\bar{\Delta}^{(2)}_i(1-\bar{\Delta}^{(2)}_i)^{n-1}]$, where $\bar{\Delta}^{(2)}_i=\eta^{(2)}_1\Pi_{j=2}^{i}(1-\eta^{(2)}_j)$. It is obvious that  $(\bar{\Delta}^{(2)}_i)_{i\geq 1}$ is a Markov chain.  For $s>0$, we define a stopping time
\begin{align*}
\tau_s&=\min\{i|\bar{\Delta}^{(2)}_i\leq 1/s\}\\
&=\min\{i|-\sum_{j=2}^{i}\ln(1-\eta^{(2)}_j)\geq \ln s\eta_1^{(2)}\}\\
&=\min\{i+1|-\sum_{j=1}^{i}\ln(1-\eta^{(2)}_{j+1})\geq \ln s\eta_1^{(2)}\}.\\
\end{align*}

Then we get 
\begin{align}\label{2delta}
\sum_{i=1}^{+\infty}\mathbb{E}[\Delta^{(2)}_i(1-\Delta^{(2)}_i)^{n-1}]&=\mathbb{E}[\sum_{i=1}^{+\infty}\bar{\Delta}^{(2)}_i(1-\bar{\Delta}^{(2)}_i)^{n-1}]\nonumber\\
&=\mathbb{E}[\sum_{i=1}^{\tau_n-1}\bar{\Delta}^{(2)}_i(1-\bar{\Delta}^{(2)}_i)^{n-1}+\sum_{i=\tau_n}^{+\infty}\bar{\Delta}^{(2)}_i(1-\bar{\Delta}^{(2)}_i)^{n-1}].\\\nonumber
\end{align}

Notice that $x(1-x)^{n-1}\leq \frac{1}{n}$ , if $\frac{1}{n}\leq x\leq 1$ and  $x(1-x)^{n-1}\leq x$, if $0\leq x\leq \frac{1}{n}$. Then (\ref{2delta}) gives
\begin{equation}\label{1taun}
\sum_{i=1}^{+\infty}\mathbb{E}[\Delta^{(2)}_i(1-\Delta^{(2)}_i)^{n-1}]\leq \mathbb{E}[\frac{\tau_n-1}{n}+\sum_{i=\tau_n}^{+\infty}\bar{\Delta}^{(2)}_i]\leq \mathbb{E}[\frac{\tau_n-1}{n}]+\frac{1}{\mathbb{E}[n\eta^{(2)}_1]}.
\end{equation}
To calculate $\mathbb{E}[\tau_n]$, we use renewal theory.  Let $\mu=\mathbb{E}[-\ln(1-\eta^{(2)}_1)]$. Depending on whether $\mu$ is finite or not, we separate the discussion into two parts. 
 
\textbf{Part 1}: Assume that $\mu<+\infty.$ We denote by $F(t)$ the distribution function and $f(t)$ the density function of $-\ln(1-\eta^{(2)}_1)$ and $X$ an independent random variable with density function $\frac{1}{\mu}(1-F(t))\ind_{t\geq 0}.$  Let $\epsilon>0$, then using integration by parts,
\begin{equation}\label{0x1}\mathbb{P}(0\leq X\leq \epsilon)=\int_0^{\epsilon}\frac{1-F(t)}{\mu}dt=\frac{\epsilon(1-F(\epsilon))}{\mu}+\frac{\int_0^{\epsilon}tf(t)dt}{\mu}\geq \frac{\int_0^{\epsilon}tf(t)dt}{\mu}.\end{equation}

One can write $\int_0^{\epsilon}tf(t)dt$ in another way

$$\int_0^{\epsilon}tf(t)dt=\frac{\int_{1/n}^{1-e^{-\epsilon}}-\ln(1-x)x^{-2}\Lambda(dx)}{\int_{1/n}^{1}x^{-2}\Lambda(dx)}.$$

Notice that $\mu=\frac{\int_{1/n}^{1}-\ln(1-x)x^{-2}\Lambda(dx)}{\int_{1/n}^{1}x^{-2}\Lambda(dx)}<+\infty,$ then there must exist a large number $\epsilon_0>0$ such that for any $\epsilon\geq\epsilon_0$, 

$$\int_0^{\epsilon}tf(t)dt\geq \frac{1}{2}\frac{\int_{1/n}^{1}-\ln(1-x)x^{-2}\Lambda(dx)}{\int_{1/n}^{1}x^{-2}\Lambda(dx)}=\frac{\mu}{2}.$$

Now together with (\ref{0x1}), one gets
\begin{equation}\label{1/2}\mathbb{P}(0\leq X\leq \epsilon)\geq 1/2, \quad \forall \epsilon\geq \epsilon_0.\end{equation}
We fix $\epsilon\geq \epsilon_0$ and define a new Markov chain $(X-\sum_{j=2}^{i}\ln(1-\eta^{(2)}_j))_{i\geq 1}$ and a stopping time $\tau'_s=\min\{i| X-\sum_{j=1}^{i}\ln(1-\eta^{(2)}_{j+1})\geq \ln s)\}$ for $s>0$. It is clear from the definitions of $\tau_s$ and $\tau_s'$ that
 $$\mathbb{E}[\tau'_{s\eta_1^{(2)}}|X=\epsilon]=\mathbb{E}[\tau_{se^{{-\epsilon}}}-1].$$
 
Then
\begin{align*}
\mathbb{E}[\tau'_{n\eta^{(2)}_1}]&=\mathbb{E}[\tau'_{n\eta^{(2)}_1}\ind_{0\leq X\leq \epsilon}]+\mathbb{E}[\tau'_{n\eta^{(2)}_1}\ind_{X> \epsilon}]\\
&\geq \mathbb{P}(0\leq X\leq \epsilon)\mathbb{E}[\tau_{nexp(-\epsilon)}-1]+\mathbb{E}[\tau'_{n\eta^{(2)}_1}\ind_{X> \epsilon}],\\
\end{align*}

which implies that
\begin{equation}\label{tautau}\mathbb{E}[\tau_{nexp(-\epsilon)}]\leq \frac{\mathbb{E}[\tau'_{n\eta^{(2)}_1}]}{\mathbb{P}(0\leq X\leq \epsilon)}+1.\end{equation}

Due to  (4.4) and (4.6) in [\cite{MR0270403}, p.369],  we have 
 $$\mathbb{E}[\tau'_s]=\frac{\ln s}{\mu}, \forall s\geq 1.$$

Notice that $\eta_1^{(2)}\geq \frac{1}{n}$, hence $n\eta_1^{(2)}\geq1.$ Therefore, (\ref{tautau}) gives
\begin{equation}\label{taunmain}
\mathbb{E}[\tau_{n}]\leq \frac{\mathbb{E}[\tau'_{ne^{\epsilon}\eta^{(2)}_1}]}{\mathbb{P}(0\leq X\leq \epsilon)}+1=\frac{\mathbb{E}[\ln(ne^{\epsilon}\eta^{(2)})]}{\mu\mathbb{P}(0\leq X\leq \epsilon)}+1.\end{equation}

Notice that for any $0\leq x< 1$, we have $-\ln(1-x)\geq x$, hence $\mathbb{\mu}\geq \mathbb{E}[\eta^{(2)}_1]$. Then (\ref{taunmain}) implies 

\begin{equation}\label{taunmain1}
\frac{\mathbb{E}[\tau_{n}]}{n}\leq \frac{\mathbb{E}[\ln n\eta^{(2)}_1]+\epsilon}{\mathbb{E}[n\eta^{(2)}_1]\mathbb{P}(0\leq X\leq \epsilon)}+\frac{1}{n}.
\end{equation}

Using (\ref{1/2}) and (\ref{1taun}), it suffices to prove that:
$$\displaystyle \lim_{n\rightarrow +\infty}\mathbb{E}[n\eta^{(2)}_1]=+\infty, \text{and} \quad \lim_{n\rightarrow +\infty}\frac{\mathbb{E}[\ln(n\eta^{(2)}_1)]}{\mathbb{E}[n\eta^{(2)}_1]}=0.$$

 It is easy to see that, using (\ref{gn1}),  there exists a positive constant $C_2$ such that $\mathbb{E}[n\eta^{(2)}_1]=\frac{n\int_{1/n}^{1}x^{-1}\Lambda(dx)}{\bar{\mu}^{(n)}}\geq C_2\frac{n\mu^{(n)}}{g_n}$, for any $n\geq 3.$ Hence  $\mathbb{E}[n\eta^{(2)}_1]$ tends to $+\infty$ since $\Lambda$ satisfies $(\ref{gnmu})$.  For the second convergence, we fix $M>e$. Then, 
  \begin{align*}
  \frac{\mathbb{E}[\ln(n\eta^{(2)}_1)]}{\mathbb{E}[n\eta^{(2)}_1]}&=\frac{\mathbb{E}[\ln(n\eta^{(2)}_1)\ind_{n\eta^{(2)}_1\geq M}]+\mathbb{E}[\ln(n\eta^{(2)}_1)\ind_{n\eta_1^{(2)}<M}]}{\mathbb{E}[n\eta^{(2)}_1]}\\
  &\leq \frac{\mathbb{E}[\ln(n\eta^{(2)}_1)\ind_{n\eta^{(2)}_1\geq M}]}{\mathbb{E}[n\eta^{(2)}_1]}+\frac{\ln M}{\mathbb{E}[n\eta^{(2)}_1]}\\
  &\leq \frac{\mathbb{E}[\ln(n\eta^{(2)}_1)\ind_{n\eta^{(2)}_1\geq M}]}{\mathbb{E}[n\eta^{(2)}_1\ind_{n\eta^{(2)}_1\geq M}]}+\frac{\ln M}{\mathbb{E}[n\eta^{(2)}_1]}\\
    &\leq  \frac{\ln M}{M}+\frac{\ln M}{\mathbb{E}[n\eta^{(2)}_1]}.\\
  \end{align*}
  The last inequality is due to the fact that for any $x\geq M>e$, we have $\frac{\ln (x)}{x}\leq \frac{\ln M}{M}$.   Since $M$ can be chosen as large as we want, then $\displaystyle \lim_{n\rightarrow +\infty} \frac{\mathbb{E}[\ln(n\eta^{(2)}_1)]}{\mathbb{E}[n\eta^{(2)}_1]}=0.$ Hence  we can conclude.
  
  \textbf{Part 2}: If $\mu=+\infty.$ We define $(\bar{\eta}^{(2)}_{i})_{i\geq 2}:=(\frac{1}{2}\ind_{\eta^{(2)}_i\geq \frac{1}{2}}+\eta^{(2)}_i\ind_{\eta^{(2)}_i<\frac{1}{2}})_{i\geq 2}$ and for $s>0,$ $\bar{\tau}_s:=\min\{i+1|\sum_{j=1}^{i}-\ln(1-\bar{\eta}^{(2)}_{j+1})\geq \ln s\eta^{(2)}_1\}$. Notice that $\mathbb{E}[-\ln(1-\bar{\eta}^{(2)}_i)]<+\infty$, then we return to the first case and get (\ref{taunmain1}) by replacing $\tau_n$ by $\bar{\tau}_n$ and keeping the same $\eta^{(2)}_1$ but with different $X$ (depending on $\bar{\eta}^{(2)}_i, i\geq 2$).  In this setting, $\mathbb{P}(0\leq X\leq \ln2)=1.$ We see that the closer $\bar{\eta}^{(2)}_i$ is to $1$, larger the $-\ln(1-\bar{\eta}^{(2)}_i)$ and hence $\tau_n\ll \bar{\tau}_n.$ Then we can conclude.
  \end{proof}
  
  \begin{rem}
  For $0<\epsilon<1$, we also have 
  \begin{equation}\label{nepsilon}
 \lim_{n\rightarrow +\infty} \sum_{i=1}^{+\infty}\mathbb{E}[\Delta^{(2)}_i(1-\Delta^{(2)}_i)^{n(1-\epsilon)}]=0.
  \end{equation}
 The proof is all the same. The only thing different is that  in place of (\ref{1taun}), we have
$  \sum_{i=1}^{+\infty}\mathbb{E}[\Delta^{(2)}_i(1-\Delta^{(2)}_i)^{n(1-\epsilon)}]\leq C\mathbb{E}[\frac{\tau_n-1}{n}+\sum_{i=\tau_n}^{+\infty}\bar{\Delta}^{(2)}_i],$
with $C$ larger than $1$ and depends on $\epsilon.$
  \end{rem}
  
Now we can start to study at first $P_{t/\mu^{(n)}}^{(2, n)}$. 

\begin{cor}
\begin{equation}
\lim_{t\rightarrow +\infty}\liminf_{n\rightarrow +\infty}P_{t/\mu^{(n)}}^{(2, n)}=1. 
\end{equation}
\end{cor}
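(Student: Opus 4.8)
The plan is to compare $P^{(2,n)}_{t/\mu^{(n)}}$ with the limit $P^{(2,n)}=\lim_{s\to+\infty}P^{(2,n)}_s$ studied in Proposition~\ref{plambda}, and to show that the difference is at most $e^{-t}$ uniformly in $n$. By the definition of $P^{(2,m)}_t$ in Lemma~\ref{rela2} (whose summands are nondecreasing in the time parameter) and monotone convergence,
\[
0\le P^{(2,n)}-P^{(2,n)}_{t/\mu^{(n)}}=\sum_{i=1}^{+\infty}\mathbb{E}\Big[\ind_{S^{(2)}_i\ge t/\mu^{(n)}}\,\Delta^{(2)}_i\big(1-(1-\Delta^{(2)}_i)^{n-1}\big)\Big]\le \sum_{i=1}^{+\infty}\mathbb{E}\Big[\ind_{S^{(2)}_i\ge t/\mu^{(n)}}\,\Delta^{(2)}_i\Big],
\]
so it suffices to bound the last sum. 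First I would use the telescoping identity $\sum_{j=1}^{i}\Delta^{(2)}_j=1-\prod_{j=1}^{i}(1-\eta^{(2)}_j)$ together with the fact, recalled in the proof of Proposition~\ref{plambda}, that $\sum_{i\ge1}\Delta^{(2)}_i=1$ almost surely. Writing $N$ for the number of marking times $S^{(2)}_i$ falling in $[0,t/\mu^{(n)})$, and using that the $S^{(2)}_i$ are strictly increasing, this gives
\[
\sum_{i=1}^{+\infty}\ind_{S^{(2)}_i\ge t/\mu^{(n)}}\,\Delta^{(2)}_i=\sum_{i>N}\Delta^{(2)}_i=\prod_{j=1}^{N}(1-\eta^{(2)}_j).
\]

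Next I would exploit that $N$ is Poisson with parameter $\bar{\mu}^{(n)}t/\mu^{(n)}$, where $\bar{\mu}^{(n)}=\int_{1/n}^1x^{-2}\Lambda(dx)=\int_0^1x^{-2}\Lambda_2(dx)$, and that $N$ is independent of $(\eta^{(2)}_j)_{j\ge1}$ (the families $(e^{(2)}_i)_{i\ge1}$ and $(\eta^{(2)}_i)_{i\ge1}$ being independent). Conditioning on $N$, using the independence and identical distribution of the $\eta^{(2)}_j$, and then the probability generating function of the Poisson law exactly as in the proof of Corollary~\ref{lll},
\[
\mathbb{E}\Big[\prod_{j=1}^{N}(1-\eta^{(2)}_j)\Big]=\mathbb{E}\big[(1-\mathbb{E}[\eta^{(2)}_1])^{N}\big]=\exp\Big(-\frac{\bar{\mu}^{(n)}t}{\mu^{(n)}}\,\mathbb{E}[\eta^{(2)}_1]\Big)=e^{-t},
\]
the last equality using $\mathbb{E}[\eta^{(2)}_1]=\big(\int_{1/n}^1x^{-1}\Lambda(dx)\big)/\bar{\mu}^{(n)}=\mu^{(n)}/\bar{\mu}^{(n)}$.

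Putting the pieces together gives $P^{(2,n)}_{t/\mu^{(n)}}\ge P^{(2,n)}-e^{-t}$ for all $n$ large enough that $\mu^{(n)}>0$ (whence $\bar{\mu}^{(n)}>0$, and $\int_0^1x^{-2}\Lambda_2(dx)<+\infty$ automatically since $\Lambda_2=\Lambda\ind_{[1/n,1]}$). Taking $\liminf_{n\to+\infty}$ and applying Proposition~\ref{plambda} yields $\liminf_{n\to+\infty}P^{(2,n)}_{t/\mu^{(n)}}\ge 1-e^{-t}$, and letting $t\to+\infty$ gives the claimed limit, the reverse inequality being immediate from $P^{(2,n)}_t\le P^{(2,n)}\le\sum_{i\ge1}\mathbb{E}[\Delta^{(2)}_i]=1$. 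I do not expect a serious obstacle here: the only real point is the telescoping identity combined with the independence of the number of marking times from the marking probabilities, which is precisely what collapses the tail sum to the clean exponential $e^{-t}$; everything else is the computation already carried out for Corollary~\ref{lll}.
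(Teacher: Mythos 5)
Your proof is correct, and it is worth comparing the route you take with the paper's. Both arguments rest on the same two pillars: Proposition \ref{plambda} (to make $\sum_i\mathbb{E}[\Delta_i^{(2)}(1-\Delta_i^{(2)})^{n-1}]$ vanish) and the identification of the marking times as a Poisson process of rate $\bar{\mu}^{(n)}$ independent of the $\eta_i^{(2)}$. The difference lies in how the time truncation at $t/\mu^{(n)}$ is handled. The paper truncates the sum at $\tau_n(t)$, writes $\mathbb{E}[\sum_{i=1}^{\tau_n(t)}\Delta_i^{(2)}]$ as $\mathbb{E}[1-(1-1/I_n)^{\tau_n(t)}]$, and then lower-bounds it by restricting to the event $\{\beta tI_n\le\tau_n(t)\le tI_n/\beta\}$, whose complement is controlled by the large-deviation estimate (\ref{taunt2}); the double limit in $t$ and $n$ then gives (\ref{taun1}). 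You instead bound $P^{(2,n)}-P^{(2,n)}_{t/\mu^{(n)}}$ by the expected tail mass $\mathbb{E}[\sum_{i>N}\Delta_i^{(2)}]=\mathbb{E}[\prod_{j=1}^{N}(1-\eta_j^{(2)})]$ and compute it \emph{exactly} as $e^{-t}$ via the Poisson probability generating function (the same device the paper uses in Corollary \ref{lll}), which yields the clean uniform-in-$n$ inequality $P^{(2,n)}_{t/\mu^{(n)}}\ge P^{(2,n)}-e^{-t}$ and dispenses with the large-deviation step altogether. This is a genuine simplification of that step, and it even gives a quantitative rate in $t$. Two small points you should make explicit: the almost sure identity $\sum_{i\ge1}\Delta_i^{(2)}=1$ (the paper only states it in expectation) holds because $\eta_j^{(2)}\ge 1/n$ almost surely, so $\prod_{j=1}^k(1-\eta_j^{(2)})\le(1-1/n)^k\to0$; and the factorization $\mathbb{E}[\prod_{j=1}^{N}(1-\eta_j^{(2)})]=\mathbb{E}[(1-\mathbb{E}[\eta_1^{(2)}])^{N}]$ uses that the $\eta_j^{(2)}$ are i.i.d.\ and independent of the family $(e_i^{(2)})_{i\ge1}$, exactly as fixed in Step 0 of the two-type construction. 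With these remarks your argument is complete.
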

\begin{proof} Recall that $\{e_i^{(2)}\}_{i\geq n}$ are i.i.d exponential variables with parameter $\int_0^1x^{-2}\Lambda_2(dx)=\bar{\mu}^{(n)}$, as  defined in section 2.3. Let $\tau_n(t)=\max\{j: \sum_{i=1}^je_i^{(2)} \leq t/\mu^{(n)}\}$. Then 
\begin{equation}\label{{1}}
P_{t/\mu^{(n)}}^{(2, n)}=\mathbb{E}[\sum_{i=1}^{\tau_n(t)}\Delta_i^{(2)}-\sum_{i=1}^{\tau_n(t)}\Delta_i^{(2)}(1-\Delta_i^{(2)})^{n-1}].\end{equation}

Due to Proposition \ref{plambda}, we have 
$$\displaystyle \lim_{n\rightarrow +\infty}\mathbb{E}[\sum_{i=1}^{\tau_n(t)}\Delta_i^{(2)}(1-\Delta_i^{(2)})^{n-1}]\leq \lim_{n\rightarrow +\infty}\mathbb{E}[\sum_{i=1}^{+\infty}\Delta_i^{(2)}(1-\Delta_i^{(2)})^{n-1}]=0.$$
Then it suffices to prove that 
\begin{equation}\label{taun1}
\displaystyle \lim_{t\rightarrow +\infty}\liminf_{n\rightarrow +\infty}\mathbb{E}[\sum_{i=1}^{\tau_n(t)}\Delta_i^{(2)}]=1.
\end{equation} Let $\mathcal{E}_j=\bar{\mu}^{(n)} \sum_{i=1}^je_i^{(2)}$, which is the sum of $j$ i.i.d unit exponential variables.  Let $I_{n}=\bar{\mu}^{(n)}/\mu^{(n)}$. Then 
\begin{align*}
\tau_n(t)&=\max\{j: \mathcal{E}_j\leq tI_n\}.\\
\end{align*}
For any fixed $0<\beta<1,$

\begin{align}\label{taunt2}
&\quad \mathbb{P}\left(\tau_n(t)\in [0, \beta tI_n)\bigcup (tI_n/\beta, +\infty )\right)\nonumber\nonumber\\
&=\mathbb{P}(\mathcal{E}_{\lceil \beta tI_n \rceil }\geq tI_n)+\mathbb{P}(\mathcal{E}_{\lfloor tI_n/\beta \rfloor }\leq tI_n)=o((tI_n)^{-1}),
\end{align}
where the last equality is a large deviation result (for example, see Theorem 1.4 of  \cite{den2008large}). 
).
Notice that $\tau_n(t)$ is independent of $\{\Delta_i^{(2)}\}_{i\geq 1}$, then
\begin{align*}
\mathbb{E}[\sum_{i=1}^{\tau_n(t)}\Delta_i^{(2)}]&=\mathbb{E}[1-(1-1/I_n)^{\tau_n(t)+1}]\\
&= \mathbb{E}[1-(1-1/I_n)^{\tau_n(t)+1}\ind_{tI_n\beta \leq \tau_n(t)\leq tI_n/\beta}]+o((tI_n)^{-1})\\
&\geq \mathbb{E}[1-(1-1/I_n)^{tI_n\beta}\ind_{tI_n\beta \leq \tau_n(t)\leq tI_n/\beta}]+o((tI_n)^{-1}).\\
\end{align*}
Notice that $I_n\geq 1$ and the term at the right of the above inequality satisfies
$$\lim_{t\rightarrow +\infty}\liminf_{n\rightarrow+\infty} \mathbb{E}[1-(1-1/I_n)^{tI_n\beta}\ind_{tI_n\beta \leq \tau_n(t)\leq tI_n/\beta}]+o((tI_n)^{-1})=1$$
 Then we can conclude (\ref{taun1}). 

\end{proof}
\begin{rem}
For $0<\epsilon<1$, we also have 
\begin{equation}\label{nepsilon-}
\lim_{t\rightarrow +\infty}\liminf_{n\rightarrow +\infty}P_{t/\mu^{(n)}}^{(2, \lceil n-n\epsilon \rceil)}=1. 
\end{equation}

To prove this, in the proof of this corollary, on should replace (\ref{{1}}) by
$$P_{t/\mu^{(n)}}^{(2, n)}=\mathbb{E}[\sum_{i=1}^{\tau_n(t)}\Delta_i^{(2)}-\sum_{i=1}^{\tau_n(t)}\Delta_i^{(2)}(1-\Delta_i^{(2)})^{\lceil n-n\epsilon\rceil-1}].$$

The first term satisfies (\ref{taun1}). For the second term, using (\ref{nepsilon}), we get $\displaystyle \lim_{n\rightarrow +\infty}\mathbb{E}[\sum_{i=1}^{\tau_n(t)}\Delta_i^{(2)}(1-\Delta_i^{(2)})^{\lceil n-n\epsilon\rceil-1}]=0.$ Then (\ref{nepsilon-}) is proved.

\end{rem}

The next corollary is straightforward using (\ref{1}), (\ref{1k}) and (\ref{nepsilon-}).  

\begin{cor} For any $0<\epsilon<1$, 
$$\lim_{t\rightarrow +\infty}\liminf_{n\rightarrow +\infty}P_{1,2}^{(n, \lceil n-n\epsilon\rceil, k )}(t/\mu^{(n)})=1, \lim_{t\rightarrow +\infty}\liminf_{n\rightarrow +\infty}P_{1,2}^{(n, \lceil n-n\epsilon\rceil )}(t/\mu^{(n)})=1, $$
\end{cor}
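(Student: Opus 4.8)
The plan is to read off the statement as a direct combination of three facts already established in the excerpt: the deterministic lower bounds of Lemma~\ref{rela2} and Lemma~\ref{rela1} relating $P_{1,2}^{(n,m)}(t)$ and $P_{1,2}^{(n,m,k)}(t)$ to $P_t^{(2,m)}$, applied with $m=\lceil n-n\epsilon\rceil$, together with the limit $(\ref{nepsilon-})$ just proved for $P_{t/\mu^{(n)}}^{(2,\lceil n-n\epsilon\rceil)}$. So the proof is essentially a two-line chaining of inequalities, and the main content is bookkeeping: checking that the parameter $m=\lceil n-n\epsilon\rceil$ satisfies $2\le m\le n$ for $n$ large, that $k$ is fixed, and that all the hypotheses (condition $(\ref{gnmu})$ and the decomposition $\Lambda_1=\Lambda\ind_{[0,1/n)}$, $\Lambda_2=\Lambda\ind_{[1/n,1]}$) are in force.

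First I would set $m=\lceil n-n\epsilon\rceil$ and note that for $n$ large enough $2\le m< n$, so Lemma~\ref{rela2} gives $P_{1,2}^{(n,m)}(t/\mu^{(n)})\ge P_{t/\mu^{(n)}}^{(2,m)}$ (here the role of the path $\Pi$ of $\Pi^{(1,n)}$ with $|\Pi(t/\mu^{(n)})|=m$ and $\{1\}\in\Pi(t/\mu^{(n)})$ is implicit, exactly as in the statement of Lemma~\ref{rela2}). Taking $\liminf_{n\to\infty}$ and then $\lim_{t\to\infty}$ and invoking $(\ref{nepsilon-})$ yields $\lim_{t\to\infty}\liminf_{n\to\infty}P_{1,2}^{(n,\lceil n-n\epsilon\rceil)}(t/\mu^{(n)})\ge1$; since each $P_{1,2}^{(n,m)}(t)$ is a probability it is $\le1$, so the limit equals $1$.

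Second, for the $k$-block version I would apply Lemma~\ref{rela1}, which gives $P_{1,2}^{(n,m,k)}(t/\mu^{(n)})\ge 1-k\bigl(1-P_{t/\mu^{(n)}}^{(2,m)}\bigr)$. With $k$ fixed, taking $\liminf_{n\to\infty}$ and then $\lim_{t\to\infty}$ and using $(\ref{nepsilon-})$ again shows the right-hand side tends to $1$; combined with the trivial upper bound $P_{1,2}^{(n,m,k)}(t/\mu^{(n)})\le1$ this gives $\lim_{t\to\infty}\liminf_{n\to\infty}P_{1,2}^{(n,\lceil n-n\epsilon\rceil,k)}(t/\mu^{(n)})=1$, which is the first assertion.

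There is no real obstacle here; the only point requiring a moment's care is the order of the two limits (first $n\to\infty$ inside a $\liminf$, then $t\to\infty$), which must be taken in the same order as in $(\ref{nepsilon-})$ so that the cited limit can be used verbatim, and the observation that $k$ being a fixed integer is what lets the factor $k$ in Lemma~\ref{rela1} be harmless in the limit. One should also remark that condition $(\ref{gnmu})$ is used only through $(\ref{nepsilon-})$ (itself a consequence of Proposition~\ref{plambda} and its following remark), so no further appeal to the structure of $\Lambda$ is needed at this stage.
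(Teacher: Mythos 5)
Your proposal is correct and matches the paper's route exactly: the paper states that the corollary is straightforward from the inequalities of Lemma~\ref{rela2} and Lemma~\ref{rela1} (i.e.\ (\ref{1}) and (\ref{1k})) combined with (\ref{nepsilon-}), which is precisely the two-line chaining you carry out. Your added bookkeeping (fixed $k$, $2\le\lceil n-n\epsilon\rceil\le n$ for large $n$, order of limits) is the same implicit content the paper leaves to the reader.
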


\subsection{ Proofs of main results.}

\textrm{\\}
\textbf{Proof of Theorem \ref{gnmu0}}
\begin{proof} Fix $t>0$ and $0<\epsilon<1$. Considering the measure division construction for two-type $\Lambda$-coalescents, let $\Pi$ be the path of $\Pi^{(1, n)}$ chosen at the step $0$ and define the event
$$E'=\{|\Pi(t/\mu^{(n)})|\geq n-n\epsilon\}\bigcap\{\{1\} \in \Pi(t/\mu^{(n)}) \}.$$
Recall that $\{|\Pi^{(1, n)}(t/\mu^{(n)})|\geq n-n\epsilon\}$ implies that there are at least $n-\lceil2n\epsilon \rceil$ singletons at time $t/\mu^{(n)}$.
For $n$ large enough, using the exchangeability property, we have $\mathbb{P}(E')\geq \frac{n-\lceil 2n\epsilon\rceil}{n}(1-\kappa_n(t))$, where $\kappa_n(t)=\mathbb{P}(|\Pi^{(1, n)}(t/\mu^{(n)})|< n-n\epsilon)$ and $\kappa_n(t)=o(n^{-1})$ due to the inequality (\ref{lambda1<}) . For $\epsilon$ small enough and $n$ large enough, we have $\mathbb{P}(E')$ as close as we want to $1$.  We define  another event 
$$E'':=\{\{1\} \text{ is coalesced at its \textit{first marking time} within $[0, t)$.}\}$$

Then due to (\ref{1}) and $P_t^{(2, m)}$ is increasing on $m$, we get
\begin{equation}\label{|}\mathbb{P}(E''|E')\geq P_{t/\mu^{(n)}}^{(2, \lceil n-n\epsilon \rceil)}.\end{equation}

Let $0<t_1<t$, 

\begin{align}\label{thm1}
\mathbb{P}(T_1^{(n)}\geq t_1/\mu^{(n)})&=\mathbb{P}(T_1^{(n)}\geq t_1/\mu^{(n)}, E'\bigcap E'')+\mathbb{P}(T_1^{(n)}\geq t_1/\mu^{(n)}, (E'\bigcap E'')^c)\nonumber\\
&=\mathbb{P}(L_1^{(2, n)}\geq t_1/\mu^{(n)}, E'\bigcap E'')+\mathbb{P}(T_1^{(n)}\geq t_1/\mu^{(n)}, (E'\bigcap E'')^c)
\end{align}

Corollary \ref{lll} tells that $\mathbb{P}(L_1^{(2, n)}\geq t_1/\mu^{(n)} |E')=exp(-t_1)$ and it has been proved that $\mathbb{P}(E'\cap E'')=\mathbb{P}(E')\mathbb{P}(E''|E')$ can be made as close as possible to $1$ by taking $\epsilon$ small enough and $t$ large enough and $n$ tending to $+\infty$. Hence the first term of (\ref{thm1}) can be made as close as we want to $exp(-t_1)$ and the second term is close to $0$. Then we can conclude.

\end{proof}
\textbf{Proof of Theorem \ref{kd}} 
\begin{proof}
We prove instead for $k\in\mathbb{N}$:
\begin{equation}\label{mainkd1} \mu^{(n)}(T_1^{(n)},T_2^{(n)},\cdots, T_k^{(n)}) \stackrel{(d)}{\rightarrow } (e_1,e_2,\cdots , e_k),\end{equation}
which is equivalent to (\ref{mainkd}) (see Billingsley [\cite{MR1324786}, p.19]). We will give the proof for $k=2$ and leave the easy extension to readers. The proof is similar to that of Theorem \ref{gnmu0}.  Let $\Pi$ be the path of $\Pi^{(1, n)}$ chosen at step 0. Let $t>0, 0<\epsilon<1$ and define the event 
$$F':=\{|\Pi(t/\mu^{(n)})|\geq n-n\epsilon\}\bigcap\{\{1\}, \{2\} \in \Pi(t/\mu^{(n)}) \}.$$

Using the same arguments, we get $\mathbb{P}(F')\geq \frac{{n-\lceil2n\epsilon \rceil\choose 2}}{{n\choose 2}}(1-\kappa_n(t))$. We then define the event 
$$F'':=\{\{1\}, \{2\} \text{ are both coalesced at their first \textit{marking times} within $[0, t)$.}\}$$
Then due to (\ref{1k}) and $P_t^{(2, m)}$ is increasing on $m$, we get
$$P(F''|F')\geq 1-2(1-P_{t/\mu^{(n)}}^{(2, \lceil n-n\epsilon \rceil)}),$$
which is as close as possible to $1$ for $t$ large and $n$ tending to $+\infty.$

Let $0\leq t_1,t_2\leq t$. Then
\begin{align}\label{thm}
&\quad \mathbb{P}(T_1^{(n)}\geq t_1/\mu^{(n)}, T_2^{(n)}\geq t_2/\mu^{(n)})\nonumber\\
&=\mathbb{P}(T_1^{(n)}\geq t_1/\mu^{(n)}, T_2^{(n)}\geq t_2/\mu^{(n)}, F'\bigcap F'')+\mathbb{P}(T_1^{(n)}\geq t_1/\mu^{(n)}, T_2^{(n)}\geq t_2/\mu^{(n)}, (F'\bigcap F'')^c)\nonumber\\
&=\mathbb{P}(L_1^{(2,n)}\geq t_1/\mu^{(n)}, L_2^{(2,n)}\geq t_2/\mu^{(n)}, F'\bigcap F'')+ \mathbb{P}(T_1^{(n)}\geq t_1/\mu^{(n)}, T_2^{(n)}\geq t_2/\mu^{(n)}, (F'\bigcap F'')^c).
\end{align}

As shown that $\mathbb{P}((F'\cap F''))$ can be made as close as possible to $1$ by taking $t$ large enough and $\epsilon$ small enough, tending $n$ to $+\infty$.  Then the second term in (\ref{thm}) is close to $0.$ Using Corollary \ref{lll}, the first term is as close as possible to $e^{-t_1-t_2}$ by tending $n$ to $+\infty$ with $t$ large enough. Then we can conclude.
\end{proof}

\textbf{Proof of Corollary \ref{momk}}

\begin{proof}We prove at first the case of one external branch length. We seek to prove the uniform integrability of $\{(\mu^{(n)}T_1^{(n)})^k, n\geq 2\}$ for any $k\geq 0$. One needs only to show that for any $k\in\mathbb{N}$, $\sup\{\mathbb{E}[(\mu^{(n)}T_1^{(n)})^k]| n\geq 2\}<+\infty$ (see Lemma 4.11 of \cite{MR1876169} and  Problem 14 in section 8.3  of \cite{breiman1992probability}). Let $M>0, 0<\epsilon<1$, $\beta_n=|\Pi^{(n)}(M/\mu^{(n)})|$ and $n_0=\min\{i|\mu^{(i)}>0\}$. To avoid invalid calculations, we set $\mu^{(n)}=1$ if $n<n_0$.  Using the Markov property,  we have
$$T_1^{(n)}\ll M/\mu^{(n)}+\bar{T}_1^{(\beta_n)}\ind_{T_1^{(n)}\geq M/\mu^{(n)}},$$
where $\bar{T}_1^{(n)}\stackrel{(d)}{=}T_1^{(n)}, n\geq 2$ and conditional on $\beta_n,$ $\bar{T}_1^{(\beta_n)}$ is independent of $\{\ind_{T_1^{(n)}\geq M/\mu^{(n)}}\}$. Then for $n\epsilon\geq n_0,$
\begin{align}\label{mobond}
\mathbb{E}[(\mu^{(n)}T_1^{(n)})^k]&\leq \mathbb{E}[(M+\mu^{(n)}\bar{T}_1^{(\beta_n)}\ind_{\mu^{(n)}T_1^{(n)}> M})^{k}]\leq (2M)^k+ \mathbb{E}[(2\mu^{(n)}\bar{T}_1^{(\beta_n)}\ind_{\mu^{(n)}T_1^{(n)}> M})^{k}]\nonumber\\
&\leq (2M)^{k}+(\mathbb{E}[2\mu^{(n)}\bar{T}_1^{(n)}\ind_{\beta_n=n}])^k]+\mathbb{E}[(2\mu^{(n)}\bar{T}_1^{(\beta_n)}\ind_{\mu^{(n)}T_1^{(n)}> M, n\epsilon\leq \beta_n\leq n-1})^{k}]\nonumber\\
&\quad+\mathbb{E}[(2\mu^{(n)}\bar{T}_1^{(\beta_n)}\ind_{\mu^{(n)}T_1^{(n)}> M, \beta_n<n\epsilon})^{k}]\nonumber\\
&\leq (2M)^{k}+exp(-\frac{Mg_n}{\mu^{(n)}})\mathbb{E}[(2\mu^{(n)}\bar{T}_1^{(n)})^k]\nonumber\\
&\quad+\mathbb{P}(\mu^{(n)}T_1^{(n)}> M)(2\frac{\mu^{(n)}}{\mu^{(n\epsilon)}})^{k}\max\{\mathbb{E}[(\mu^{(j)}\bar{T}_1^{(j)})^{k}]|j\in[n\epsilon, n-1]\}\nonumber\\
&\quad+\mathbb{P}(\beta_n<n\epsilon)\mathbb{E}[\frac{\beta_n}{n}(2\frac{\mu^{(n)}}{\mu^{(\beta_n)}})^k(\mu^{(\beta_n)}\bar{T}_1^{(\beta_n)})^{k}|\beta_n<n\epsilon],
\end{align}
where  $exp(-\frac{Mg_n}{\mu^{(n)}})$ in the second term at right of the last inequality is the probability for no coalescence within $[0, M/\mu^{(n)}]$. The third term is due to the fact that $\mu^{(n)}$ is an increasing function of $n$ when $n\geq n_0$. The fourth term is due to exchangeability which says that the probability for $\{1\}$ not to have coalesced at $M/\mu^{(n)}$ when there exist only $\beta_n$ blocks is less than $\frac{\beta_n}{n}.$ One needs the following three estimates to prove the boundedness of $(\mathbb{E}[(\mu^{(n)}T_1^{(n)})^k])_{n\geq 2}$. 
\begin{itemize}
\item  Estimation of $exp(-\frac{Mg_n}{\mu^{(n)}})2^k:$
Notice that for $n\geq n_0$,
$$\frac{g_n}{\mu^{(n)}}=\frac{\int_{0}^{1}(1-(1-x)^n-nx(1-x)^{n-1})x^{-2}\Lambda(dx)}{\int_{1/n}^{1}x^{-1}\Lambda(dx)}\geq \frac{\int_{1/n}^{1}(1-(1-x)^n-nx(1-x)^{n-1})x^{-2}\Lambda(dx)}{\int_{1/n}^{1}x^{-1}\Lambda(dx)}\geq \frac{e-2}{e}.$$
And if $2\leq n<n_0$, we have  $exp(-\frac{Mg_n}{\mu^{(n)}})=exp(-Mg_n)\stackrel{M\rightarrow +\infty}{\rightarrow }0.$ Hence if $M$ is large enough, we have, for any $n\geq 2,$
\begin{equation}\label{M}exp(-\frac{Mg_n}{\mu^{(n)}})2^k\leq \frac{1}{4}.
\end{equation}

\item Estimation of $\mathbb{P}(\mu^{(n)}T_1^{(n)}> M)(2\frac{\mu^{(n)}}{\mu^{(n\epsilon)}})^{k}:$
Due to Corollary \ref{maincor}, we get $\displaystyle \lim_{n\rightarrow +\infty}\frac{\mu^{(n)}}{\mu^{(n\epsilon)}}=1,$ and Theorem \ref{gnmu0} gives 
$\displaystyle \lim_{n\rightarrow +\infty}\mathbb{P}(\mu^{(n)}T_1^{(n)}> M)=exp(-M).$ Hence by taking $M$ large enough, we have  for any $n\geq 2,$

\begin{equation}\label{undemi}\mathbb{P}(\mu^{(n)}T_1^{(n)}> M)(2\frac{\mu^{(n)}}{\mu^{(n\epsilon)}})^{k}\leq \frac{1}{4}.\end{equation}

\item Estimation of $\frac{\beta_n}{n}(2\frac{\mu^{(n)}}{\mu^{(\beta_n)}})^{k}, \beta_n<n\epsilon:$
Using the notations in Proposition \ref{muequi},  for $\beta_n\geq n_0,$ we have

\begin{equation}\label{betan}\frac{\mu^{(n)}}{\mu^{(\beta_n)}}=exp(\int_{1/n}^{1/\beta_n}\frac{f(x)}{x}dx)\frac{1-f(1/n)}{1-f(1/\beta_n)}.\end{equation}
Let $n_1>n_0$ such that for any $n\geq n_1$, we have  $f(1/n)\leq \frac{1}{2k}$. Hence for any $a,b\geq n_1$, $\frac{1-f(a)}{1-f(b)}\leq 2$. This $n_1$ can be found since $f(1/n)$ tends to $0$ as $n$ tends to $+\infty.$ Then (\ref{betan}) implies, for $\beta_n\geq n_1,$

\begin{equation*}\frac{\mu^{(n)}}{\mu^{(\beta_n)}}\leq 2(\frac{n}{\beta_n})^{\frac{1}{2k}}.\end{equation*}

Hence if $n_1\leq \beta_n< n\epsilon$ and $\epsilon\leq 4^{-2k-2}$, 
\begin{equation*}
\frac{\beta_n}{n}(2\frac{\mu^{(n)}}{\mu^{(\beta_n)}})^{k}\leq 4^k(\frac{\beta_n}{n})^{1/2}< 4^k(\epsilon)^{1/2}\leq  \frac{1}{4}.
\end{equation*}
 If $\beta_n<n_1,$ due to Corollary \ref{maincor}, one could find a large number $n_2$ such that $n_2>n_1$ and for any $n\geq n_2$
 \begin{equation*}
\frac{\beta_n}{n}(2\frac{\mu^{(n)}}{\mu^{(\beta_n)}})^{k}=\frac{\beta_n}{n}(2\mu^{(n)})^{k}\leq \frac{1}{4}.
\end{equation*}

In total, if $n\geq n_2$ and $\beta_n<n\epsilon,$ then 
\begin{equation}\label{betan2}\frac{\beta_n}{n}(2\frac{\mu^{(n)}}{\mu^{(\beta_n)}})^{k}\leq \frac{1}{4}.\end{equation}
\end{itemize}

Using (\ref{mobond}), (\ref{M}), (\ref{undemi}) and (\ref{betan2}), we get

\begin{align}\label{mobond1}\mathbb{E}[(\mu^{(n)}T_1^{(n)})^k]&\leq \frac{4}{3}(2M)^{k}+\frac{1}{3}\max\{\mathbb{E}[(\mu^{(j)}\bar{T}_1^{(j)})^{k}]|j\in[n\epsilon, n-1]\}+\frac{1}{3}\mathbb{E}[(\mu^{(\beta_n)}\bar{T}_1^{(\beta_n)})^{k}|\beta_n<n\epsilon]\nonumber\\
&\leq \frac{4}{3}(2M)^{k}+\frac{2}{3}\max\{\mathbb{E}[(\mu^{(j)}\bar{T}_1^{(j)})^{k}]|j\leq n-1\}.
\end{align}
The above inequality is valid for a large $M$, $\epsilon=4^{-2k-2}$ and $n\geq n_2$. Let $C_3\geq \max\{\mathbb{E}[(\mu^{(j)}T_1^{(j)})^{k}], 4(2M)^k| 2\leq j<n_2\}$, then for any $n\geq 2,$ $C_3\geq \mathbb{E}[(\mu^{(n)}T_1^{(n)})^{k}]$ using (\ref{mobond1}). Then we can conclude.

The case of multiple external branch lengths is merely a consequence of the case of one external branch length, the Cauchy-Schwarz inequality and also a uniform integrability  argument.
\end{proof}
\textbf{Proof of Corollary \ref{ext}}
\begin{proof}
Notice that $\{T_i^{(n)}\}_{1\leq i\leq n}$ are exchangeable. Hence Corollary \ref{momk} shows that 
$$\displaystyle \lim_{n\rightarrow +\infty}\mathbb{E}[\mu^{(n)}L_{ext}^{(n)}/n]=\lim_{n\rightarrow +\infty}\mathbb{E}[\mu^{(n)}(T_1^{(n)}+T_2^{(n)}+\cdots+T_n^{(n)})/n]=\lim_{n\rightarrow +\infty}\mathbb{E}[\mu^{(n)}T_1^{(n)}]=1,$$

 and 
 
\begin{align*}
\displaystyle \lim_{n\rightarrow +\infty}\Var(\mu^{(n)}L_{ext}^{(n)}/n)&=\displaystyle \lim_{n\rightarrow +\infty}\frac{\mathbb{E}[n(\mu^{(n)}T_i^{(n)})^2]+n(n-1)\mathbb{E}[(\mu^{(n)})^2T_1^{(n)}T_2^{(n)}]-n^2(\mathbb{E}[\mu^{(n)}T_1^{(n)}])^2}{n^2}\\
&=\displaystyle \lim_{n\rightarrow +\infty}\frac{\Var(\mu^{(n)}T_1^{(n)})+(n-1)Cov(\mu^{(n)}T_1^{(n)}, \mu^{(n)}T_2^{(n)})}{n}=0.
\end{align*}

 Hence $\mu^{(n)}L_{ext}^{(n)}/n$ converges in $L^2$ to $1$.\end{proof}

Before proving Corollary \ref{total}, we study at first a problem of sensibility of a recurrence satisfied by $(T_1^{(n)})_{n\geq 2}$. More precisely, if $a_n=\mathbb{E}[T_1^{(n)}]$, then $a_n$ satisfies a recurrence (see \cite{dhersin2012external}): $a_1=0$, and for $n\geq 2$, we have

\begin{equation}\label{rec1}a_n=c_n+\sum_{k=1}^{n-1}p_{n,k}\frac{k-1}{n}a_k, \end{equation}
where $(c_n)_{n\geq 2}=(\frac{1}{g_n})_{n\geq2}$ and $p_{n,k}$ is defined in (\ref{pbk}).
Due to Corollary \ref{momk}, we have $\displaystyle \lim_{n\rightarrow +\infty}\mu^{(n)}a_n=1.$ The question is as follows: what is the limit behavior of $a_n$ if we set initially the values of $(a_i)_{1\leq i\leq n_0}$ with $n_0\geq 1$ without using (\ref{rec1}) and replace $c_n$ by $c'_n=\frac{1}{g_n}+o(\frac{1}{g_n})$? It is answered in the next lemma.

\begin{lem}\label{reclem}
Let $(a'_i)_{1\leq i\leq n_0}$ be $n_0$ real numbers and for $n>n_0$
\begin{equation}\label{rec2}a'_n=c'_n+\sum_{k=1}^{n-1}p_{n,k}\frac{k-1}{n}a'_k,\end{equation}
where $(c'_n)_{n>n_0}$ is a sequence which satisfies $c'_n=\frac{1}{g_n}+o(\frac{1}{g_n})$.
Then   $$\displaystyle \lim_{n\rightarrow +\infty}\mu^{(n)}a'_n=1.$$
\end{lem}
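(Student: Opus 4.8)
The plan is to prove that $\mu^{(n)}d_n\to 0$, where $d_n:=a'_n-a_n$; since $\mu^{(n)}a_n\to 1$ by Corollary \ref{momk} (taking $r=1$), this yields $\mu^{(n)}a'_n\to 1$. Put $\gamma_n:=c'_n-c_n$, so $\gamma_n=o(1/g_n)$, and subtract (\ref{rec1}) from (\ref{rec2}): for $n>n_0$,
$$d_n=\gamma_n+\sum_{k=1}^{n-1}p_{n,k}\frac{k-1}{n}d_k,$$
while $(d_n)_{2\le n\le n_0}$ is a fixed finite family, say $|d_n|\le D_0$. Write $q_{n,k}:=p_{n,k}\frac{k-1}{n}\ge 0$ (recall $\sum_{k=1}^{n-1}p_{n,k}=1$ by (\ref{pbk})) and introduce the fundamental solution $\Phi_{n,m}$ of this linear recursion, defined for $2\le m\le n$ by $\Phi_{m,m}=1$, $\Phi_{n,m}=0$ for $n<m$, and $\Phi_{n,m}=\sum_{k=m}^{n-1}q_{n,k}\Phi_{k,m}$ for $n>m$. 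Probabilistically, since $\frac{k-1}{n}$ is the chance that the singleton $\{1\}$ survives a collision reducing the block count from $n$ to $k$, one has $\Phi_{n,m}=\mathbb{P}(\{1\}\text{ is still a singleton when }\Pi^{(n)}\text{ has }m\text{ blocks})$; this is useful intuition, though not needed below.

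Three facts about $\Phi$ are required. First, unrolling (\ref{rec1}) gives $a_n=\sum_{m=2}^n g_m^{-1}\Phi_{n,m}$, so Corollary \ref{momk} says that $\sum_{m=2}^n\mu^{(n)}g_m^{-1}\Phi_{n,m}=\mu^{(n)}a_n\to 1$, a sum of nonnegative terms. Second, $\Phi_{n,m}\le m/n$: this follows by induction on $n\ge m$, the base case $\Phi_{m,m}=1$ being clear and the step using $\Phi_{n,m}=\sum_{k=m}^{n-1}p_{n,k}\frac{k-1}{n}\Phi_{k,m}\le\frac{m}{n}\sum_{k=m}^{n-1}p_{n,k}\frac{k-1}{k}\le\frac{m}{n}$. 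Consequently, by Corollary \ref{maincor} (which gives $\mu^{(n)}/n\to 0$), $\mu^{(n)}\Phi_{n,m}\to 0$ for every fixed $m$. Third, by linearity the solution splits as $d_n=\sum_{m=n_0+1}^n\gamma_m\Phi_{n,m}+\sum_{j=2}^{n_0}d_j R_{n,j}$, where $R_{n,j}$ is the solution with zero source and initial data $\delta_{ij}$ ($i\le n_0$); since all $q_{n,k}\ge 0$, an easy induction on $n$ gives $0\le R_{n,j}\le\Phi_{n,j}$.

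Combining, for $n>n_0$,
$$\mu^{(n)}|d_n|\le D_0\sum_{j=2}^{n_0}\mu^{(n)}\Phi_{n,j}+\sum_{m=n_0+1}^n\mu^{(n)}|\gamma_m|\Phi_{n,m}.$$
The first sum has $n_0-1$ terms and tends to $0$ by the second fact above. For the second, write $|\gamma_m|=\theta_m/g_m$ with $\theta_m\to 0$, fix $\eta>0$ and choose $M\ge n_0$ with $\theta_m<\eta$ for $m>M$. The tail satisfies $\sum_{m>M}\mu^{(n)}|\gamma_m|\Phi_{n,m}\le\eta\sum_{m=2}^n\mu^{(n)}g_m^{-1}\Phi_{n,m}=\eta\,\mu^{(n)}a_n\to\eta$ by the first fact, while the head $\sum_{n_0<m\le M}\mu^{(n)}|\gamma_m|\Phi_{n,m}$ has boundedly many terms and tends to $0$. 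Hence $\limsup_n\mu^{(n)}|d_n|\le\eta$ for every $\eta>0$, so $\mu^{(n)}d_n\to 0$ and the lemma follows.

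The substance of the argument lies in the source sum: because the $\gamma_m$ are not sign-definite and $\sum_m|\gamma_m|$ may well diverge, no crude termwise bound suffices; one must exploit that $\mu^{(n)}a_n\to 1$ forces the family $(\mu^{(n)}g_m^{-1}\Phi_{n,m})_m$ to behave like an approximate probability distribution, against which the null sequence $\theta_m$ is averaged out. Granting that, the remaining ingredients — the unrolled identity for $a_n$, the bound $\Phi_{n,m}\le m/n$ feeding into Corollary \ref{maincor}, and $R_{n,j}\le\Phi_{n,j}$ — are each a one- or two-line induction, the only genuinely new observation being that $\mu^{(n)}\Phi_{n,m}\to 0$ for fixed $m$.
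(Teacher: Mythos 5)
Your proof is correct, but it runs along a different formal route than the paper's. The paper interprets the recurrences (\ref{rec1}) and (\ref{rec2}) as expected total times of a decreasing random walk with a multiplicative time-scaling, couples the two walks through the same jump chain $(W_i)_{i\geq 0}$ stopped on entry into $[1,n_{\epsilon}]$ (with $n_{\epsilon}$ chosen so that $c'_m\leq (1+\epsilon)/g_m$ beyond it), and concludes $(1-2\epsilon')a_n\leq a'_n\leq (1+2\epsilon)a_n$ by bounding the boundary contribution through $M\,\mathbb{E}[C_{\varsigma_n}]\leq M n_{\epsilon}/n$ and $\mu^{(n)}/n\to 0$. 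You instead work with the difference $d_n=a'_n-a_n$ and a deterministic Green's-function (Duhamel) decomposition: the fundamental solution $\Phi_{n,m}$, the splitting of $d_n$ into a source part and an initial-data part with $0\leq R_{n,j}\leq \Phi_{n,j}$, the bound $\Phi_{n,m}\leq m/n$, and the unrolled identity $a_n=\sum_{m=2}^n g_m^{-1}\Phi_{n,m}$ against which the null sequence $\theta_m$ is averaged using $\mu^{(n)}a_n\to 1$ (Corollary \ref{momk}, which the paper also invokes here, so there is no circularity). The two arguments correspond closely: $\Phi_{n,m}$ is the deterministic counterpart of the paper's expected scaling factor at the visit to level $m$, $\Phi_{n,m}\leq m/n$ plays the role of $\mathbb{E}[C_{\varsigma_n}]\leq n_{\epsilon}/n$, and your $\eta$-tail comparison replaces the paper's multiplicative bound $\mathbb{E}[T'_{\varsigma_n}]\leq(1+\epsilon)\mathbb{E}[T_{\varsigma_n}]$. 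What your version buys is explicitness: the coupling and "scaling effect" that the paper describes somewhat informally become two one-line inductions, and signed, possibly non-absolutely-summable perturbations $\gamma_m$ are handled transparently by comparison with the nonnegative family $(\mu^{(n)}g_m^{-1}\Phi_{n,m})_m$ of total mass $\mu^{(n)}a_n\to 1$. What the paper's version buys is probabilistic transparency (the walk is the block-counting chain seen from an external branch) and direct two-sided multiplicative control of $a'_n$ by $a_n$ without introducing the Green's function. One bookkeeping remark: your initial-data sum starts at $j=2$; strictly the decomposition also carries a $j=1$ term, but it vanishes since $q_{n,1}=p_{n,1}\cdot 0=0$ forces $R_{n,1}=0$ for $n>n_0$, so nothing is lost.
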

\begin{proof}
We fix $\epsilon>0$ and let $n_{\epsilon}> n_0$ such that $c'_n\leq \frac{1+\epsilon}{g_n}$ for $n> n_{\epsilon}$. We set $M=\max\{|a'_i|, a_i | 1\leq i\leq n_{\epsilon}\}$.

Let us at first  look at (\ref{rec1}) which has the following interpretation using random walk: A walker stands initially at point $n$, then after time $c_n$, he jumps to point $k_1$ with probability $p_{n,k_1}$, then after time $\frac{k_1-1}{n}c_{k_1}$, he jumps to $k_2$ with probability $p_{k_1, k_2}$, and then after time $\frac{(k_1-1)(k_2-1)}{nk_1}c_{k_2}$, he jumps to the next point, etc. If he falls at point $1$, then this walk is finished. It is easy to see that $a_n$ is the expectation of the total walking time. One notices that there is a scaling effect on the walking time. More precisely, let $l\geq 1$ and  $n=k_0> k_1>\cdots >k_l\geq 1$ such that the walker jumps from $k_i$ to $k_{i+1}$ for $0\leq i\leq l-1$. Then conditional on this walking history, the remaining walking time is $\left(\Pi_{i=0}^{l-1}\frac{k_{i+1}-1}{k_i}\right)a_{k_l}.$

The recurrence (\ref{rec2}) has the same interpretation. The difference is that one should stop the walker when he arrives at a point $i$ within $[1, n_0]$ and one adds a scaled value of $a'_i$ to the walking time (notice that $a'_i$ can be non-positive).  To estimate $a'_n$, we use a Markov chain $(W_i)_{i\geq 0}$ to couple the jumping structures of (\ref{rec1}) and (\ref{rec2}) : $W_0=n$, 
\begin{itemize}
\item If $W_{i}=k$ with $k\geq n_{\epsilon}$, then $W_{i+1}=k'$ with probability $p_{k,k'}$,  where $1\leq k'\leq k-1$; 
\item If $W_i<n_{\epsilon}$, then we set $W_j=W_i$ for any $j\geq i+1$. 
\end{itemize}
Notice that the jumping dynamics of both recurrences is characterized by $(W_i)_{i\geq 0}$ until arriving at a point within $[1, n_{\epsilon}]$. And we also see that $(W_i)_{i\geq 0}$ is the discrete time Markov chain related to the block counting process $|\Pi^{(n)}|$ stopped at the first time arriving within $[1, n_{\epsilon}]$.

Let $\varsigma_n=\min\{i|W_i=W_{i+1}\}$ , $C_{\varsigma_n}=\Pi_{i=0}^{\varsigma_n-1}\frac{W_{i+1}-1}{W_i}$ and $T_{\varsigma_n}$ is set to be the time to $\varsigma_n$ of the random walk related to $(\ref{rec1})$ and $T'_{\varsigma_n}$ be the corresponding time related to (\ref{rec2}).

By the scaling effect of $C_{\varsigma_n}$ on the walking time, we get 
$$a_n=\mathbb{E}[T_{\varsigma_n}+C_{\varsigma_n}a_{W_{\varsigma_n}}], a'_n=\mathbb{E}[T'_{\varsigma_n}+C_{\varsigma_n}a'_{W_{\varsigma_n}}]. $$

Due to the definitions of $M, n_{\epsilon}$, we obtain
$$a_n-M\mathbb{E}[C_{\varsigma_n}]\leq \mathbb{E}[T_{\varsigma_n}]\leq a_n; \quad a'_n-M\mathbb{E}[C_{\varsigma_n}]\leq \mathbb{E}[T'_{\varsigma_n}]\leq a'_n+M\mathbb{E}[C_{\varsigma_n}]; \quad  \mathbb{E}[T'_{\varsigma_n}]\leq (1+\epsilon)\mathbb{E}[T_{\varsigma_n}].$$

Notice that $\mathbb{E}[C_{\varsigma_n}]\leq \frac{n_{\epsilon}}{n}$ and due to Corollary \ref{maincor}, we have $\displaystyle \lim_{n\rightarrow +\infty}\frac{M\mu^{(n)}}{n}=0.$ Hence $\displaystyle \lim_{n\rightarrow +\infty}M\mathbb{E}[C_{\varsigma_n}]\mu^{(n)}=0.$ Then we can conclude that for $n$ large, $a'_n\leq (1+2\epsilon)a_n.$ In the same way, we can prove also $a'_n\geq (1-2\epsilon')a_n$ for another small positive number $\epsilon'$ with $n$ large enough. Hence we deduce the lemma.
\end{proof}

\textbf{Proof of Corollary \ref{total}}
\begin{proof}
Let $b_n=\mathbb{E}[\mu^{(n)}L_{total}^{(n)}/n]$. Then looking at the first coalescence of the process $\Pi^{(n)}$, we have,

\begin{equation}\label{mainrec}b_1=0;  b_n=\frac{\mu^{(n)}}{g_n}+\sum_{k=1}^{n-1}p_{n,k}\frac{k\mu^{(n)}}{n\mu^{(k)}}b_k, n\geq 2.\end{equation}

If for some $k$,  $\mu^{(k)}=0,$ then we set $\mu^{(k)}=1$ to avoid invalid calculations.
To use Lemma \ref{reclem}, we write (\ref{mainrec}) as:

\begin{equation}\label{mainrec1}b_1=0; b_n=\frac{\mu^{(n)}}{g_n}+\sum_{k=1}^{n-1}p_{n,k}\frac{\mu^{(n)}}{n\mu^{(k)}}b_k+\sum_{k=1}^{n-1}p_{n,k}\frac{(k-1)\mu^{(n)}}{n\mu^{(k)}}b_k, n\geq 2.\end{equation}

We at first prove that $\sum_{k=1}^{n-1}p_{n,k}\frac{\mu^{(n)}}{n\mu^{(k)}}=o(\frac{\mu^{(n)}}{g_n})$. Indeed, due to (\ref{w}),  let $a=\int_0^1(1-(1-x)^{n-1})x^{-1}\Lambda(dx)$ and $M>0$, then 

\begin{equation}\label{xxx}\mathbb{P}(X_1^{(n)}\geq Ma)\leq \frac{\mathbb{E}[X_1^{(n)}]}{Ma}\leq \frac{n}{Mg_n}.\end{equation}
Using Corollary \ref{maincor}, we have $\displaystyle \limsup_{n\rightarrow +\infty}\frac{a}{n}\leq \lim_{n\rightarrow +\infty}\frac{\int_0^{1/n}(n-1)\Lambda(dx)+\mu^{(n)}}{n}=0, \lim_{n\rightarrow +\infty}\frac{\mu^{(n)}}{\mu^{(n-Ma)}}=1.$ Then for $n$ large enough,
\begin{align*}
\sum_{k=1}^{n-1}p_{n,k}\frac{\mu^{(n)}}{n\mu^{(k)}}&=\sum_{k=1}^{\lfloor n-Ma\rfloor}p_{n,k}\frac{\mu^{(n)}}{n\mu^{(k)}}+\sum_{k=\lfloor n-Ma\rfloor+1}^{n-1}p_{n,k}\frac{\mu^{(n)}}{n\mu^{(k)}}\\
&\leq \mathbb{P}(X_1^{(n)}\geq Ma)\mathbb{E}[\frac{\mu^{(n)}}{n\mu^{(n-X_1^{(n)})}}|X_1^{(n)}\geq Ma]+\frac{\mu^{(n)}}{\mu^{(n-Ma)}n}\\
&\leq \frac{\mu^{(n)}}{Mg_n}\max\{\frac{1}{\mu^{(k)}}| 1\leq k\leq n\}+\frac{\mu^{(n)}}{\mu^{(n-Ma)}n},
\end{align*}
where the first term at the right of the the last inequality is due to (\ref{xxx}) and can be made as small as we want w.r.t $\frac{\mu^{(n)}}{g_n}$ when $M$ is large enough. Notice that $n^{-1}=o(\frac{\mu^{(n)}}{g_n})$ due to $(\ref{gnmu})$. Then the second term $\frac{\mu^{(n)}}{\mu^{(n-Ma)}n}=o(\frac{\mu^{(n)}}{g_n})$ using also $\displaystyle \lim_{n\rightarrow +\infty}\frac{\mu^{(n)}}{\mu^{(n-Ma)}}=1.$ Then $\sum_{k=1}^{n-1}p_{n,k}\frac{\mu^{(n)}}{n\mu^{(k)}}=o(\frac{\mu^{(n)}}{g_n})$.

We now only need to prove that $(b_k)_{k\geq 2}$ are bounded, since in this case, $\sum_{k=1}^{n-1}p_{n,k}\frac{\mu^{(n)}}{n\mu^{(k)}}b_k=o(\frac{\mu^{(n)}}{g_n})$ and we apply Lemma \ref{reclem} to (\ref{mainrec1}). We construct another recurrence:

\begin{equation}\label{mainrec2}b'_1=0; b'_n=\frac{C\mu^{(n)}}{g_n}+\sum_{k=1}^{n-1}p_{n,k}\frac{(k-1)\mu^{(n)}}{n\mu^{(k)}}b'_k, n\geq 2.\end{equation}
where $C$ is a positive number. If $C=1$, this is exactly a transformation of the recurrence (\ref{rec1}). Let $M'(C)=\sup\{b'_n|n\geq 1\}$. Then it is easy to see that $M'(C)=CM'(1)$. Let $n_0\geq 1$, such that for $n\geq n_0$, we have $\sum_{k=1}^{n-1}p_{n,k}\frac{\mu^{(n)}}{n\mu^{(k)}}M'(1)\leq \frac{1}{2}\frac{\mu^{(n)}}{g_n}.$ Then for $C\geq 2, n\geq n_0,$

\begin{equation}\label{part1}\frac{\mu^{(n)}}{g_n}+\sum_{k=1}^{n-1}p_{n,k}\frac{\mu^{(n)}}{n\mu^{(k)}}M'(C)\leq \frac{C\mu^{(n)}}{g_n}.\end{equation}

For $2\leq n<n_0$, we set $C$ large enough such that 

\begin{equation}\label{part2}\frac{\mu^{(n)}}{g_n}+\sum_{k=1}^{n-1}p_{n,k}\frac{\mu^{(n)}}{n\mu^{(k)}}\max\{b_i| 1\leq i< n_0\}\leq \frac{C\mu^{(n)}}{g_n}.\end{equation}

Comparing the coefficients and initial values of recurrences (\ref{mainrec1}) and (\ref{mainrec2}) using (\ref{part1}) and (\ref{part2}), we deduce that $b_n\leq b'_n\leq M'(C).$ Hence we can conclude.

\end{proof}

\textbf{Acknowledgements:}
The author benefited from the support of the "Agence Nationale de la Recherche": ANR MANEGE (ANR-09-BLAN-0215).

The author wants to thank his supervisor, Prof Jean-St\'ephane Dhersin, for fruitful discussions and
also for his careful reading of a draft version resulting in many helpful comments. The author also
wants to thank Prof Martin M\"ohle for a discussion on Theorem \ref{gnmu0}.

\end{document}